\newtheorem{remark}{Remark}[section]
\title{RESIDUAL-BASED A POSTERIORI ERROR ESTIMATION FOR MULTIPOINT FLUX MIXED FINITE
ELEMENT METHODS\thanks{This work was supported in part by the Education Science Foundation of Chongqing
(KJ120420),
 National Natural Science Foundation of China (11171239), The Project-sponsored
by Scientific Research Foundation for the Returned Overseas Chinese Scholars and
 Open Fund of  Key Laboratory of Mountain Hazards and Earth Surface Processes, CAS.}}
\author{Shaohong Du\thanks{School of Science, Chongqing Jiaotong University,
                           Chongqing 400047, China, ({\it
                           dushhong@gmail.com}).}
                           \and Shuyu Sun\thanks{ Computational Transport Phenomena Laboratory, The Physical Sciences and Engineering Division, King Abdullah University of Science and Technology, Thuwal 23955-6900, Kingdom of Saudi Arabia, ({\it  shuyu.sun@kaust.edu.sa}).}
\and Xiaoping XIE\thanks{Corresponding author. School of Mathematics, Sichuan University,
Chengdu 610064, China ({\it xpxie@scu.edu.cn}). 
}}
\begin{document}
%\begin{flushleft}
%\hrulefill\\
%\end{flushleft}
\maketitle
\begin{small}
  {\bf{Abstract.} \rm{A novel residual-type {\it a posteriori } error analysis technique is
  developed for multipoint flux mixed finite element methods for flow in porous media
  in two or three space dimensions. The derived {\it a posteriori } error estimator for  the velocity and pressure error
  in $L^{2}-$norm  consists of discretization and quadrature indicators, and is shown to be
  reliable and efficient. The main tools of analysis are   a
  locally postprocessed approximation to the pressure solution of
  an auxiliary problem and a quadrature
  error estimate. Numerical experiments are presented to  illustrate
  the competitive behavior of the estimator.}}
\end{small}

\begin{it} Key words.\end{it} multipoint flux mixed finite element
method, postprocessed approximation, {\it a posteriori } error estimate

\begin{it} AMS subject classifications.\end{it} 65N06, 65N12, 65N15, 65N30, 76S05,

\pagestyle{myheadings} \thispagestyle{plain} \markboth{SHAOHONG DU \ \ \ SHUYU SUN \ \ \ XIAOPING XIE
}{A RESIDUAL-BASED POSTERIORI ERROR ESTIMATION FOR MFMFEM}

\section {Introduction}
Let $\Omega\subset{\mathbb{R}}^{d}$  be a bounded polygonal ($d=2$) or
polyhedral ($d=3$) domain with a Lipschitz continuous boundary
$\partial\Omega$.
We consider the following  first-order system of diffusion-type    partial differential equations:
\begin{equation}\label{equation1}
 \left \{ \begin{array}{ll}
    {\bf u}=-K\nabla p \quad   \mbox{in}\ \ \Omega,\\
      \nabla\cdot {\bf u}=f  \quad \mbox{in}\ \Omega,\\
      p=g  \quad \mbox{on}\ \Gamma_{D},\\
      {\bf u}\cdot{\bf n}=0  \quad \mbox{on}\ \Gamma_{N}.\\
 \end{array}\right.
\end{equation}
Here $\Gamma_{D},\ {\Gamma}_{N}$ are partitions of the boundary $\partial\Omega$ corresponding to the Dirichlet and Neumann conditions, respectively, with    $\partial\Omega=\bar{\Gamma}_{D}\cup\bar{\Gamma}_{N}$,
${\Gamma}_{D}\cap{\Gamma}_{N}=\emptyset $ and   $meas(\Gamma_{D})>0$, ${\bf n}$ is the outward unit normal vector on
$\partial\Omega$, and $K$ is a symmetric and uniformly positive
definite tensor with
\begin{equation}\label{newadd}
k_{0}\xi^{\rm T}\xi\leq\xi^{\rm T}K({\bf x})\xi\leq k_{1}\xi^{\rm
T}\xi,\ \ \forall\ {\bf x}\in\Omega,\ \forall\
\xi\in{\mathbb{R}}^{d}
\end{equation}
 for  $0<k_{0}\leq k_{1}<\infty$.
 This system has been widely used in physics to model diffusion processes such as heat or mass transfer and flow in porous media.
%We assume below that there is certain weak regularity on $K$ to be
 %satisfied (see (\ref{dual problem 1})).
 In flow in porous media, $p$ denotes the pressure, ${\bf u}$ is
the Darcy velocity, and $K$ represents the permeability divided by
the viscosity.
%The choice of boundary conditions is made for the
%sake of simplicity, since similar results are valid for other
%boundary conditions.

The main goal of this paper is to   derive
residual-based {\it a posteriori } error estimation for multipoint flux mixed finite element (MFMFE) methods for the model (\ref{equation1}).   The MFMFE approach was developed for single phase flow in porous media in \cite{Yotov10,Yotov06,Wheeler;Xue;Yotov2012}. It is motivated by the multipoint
 flux approximation (MPFA) approach \cite{Aavatsmark02,Aavatsmark98,Edwards, Klausen;Winther2006a, Klausen;Winther2006b}, which is a control volume method developed by the oil industry as a reliable discretization for   single-phase Darcy flow. One main advantage of  this method lies in that, by introducing sub-edge (or sub-face) fluxes, it  provides a local explicit flux with respect to the flow pressure, and allows for local flux elimination
 around grid vertices and reduction to a cell-centered pressure scheme.
The MFMFE
 method is  based on the lowest order Brezzi-Douglas-Marini (BDM1)   \cite{Brezzi;Fortin1985} or  Brezzi-Douglas-Duran-Fortin (BDDF1) \cite{Brezzi;Douglas;Duran;Fortin1987} finite element space. By using special quadrature rules, local velocity elimination is also attained which leads to   a symmetric and positive definite cell-centered system for the pressure on quadrilateral, simplicial and hexahedral meshes. In \cite{Wheeler;Xue;Yotov2014},  a coupling discretization of  MFMFE
 method and continuous Galerkin finite element method  was applied to  the poroelasticity system that describes fluid flow in deformable porous
media.

It is well-known that adaptive algorithms for the numerical solution of partial differential equations are nowadays standard tools in science and engineering. A posteriori error estimation, as an essential ingredient of adaptivity, provides adaptive mesh refinement strategy and quantitative estimates of the numerical solution obtained.  For
second-order elliptic problems, the theory of {\it a posteriori } error
estimation has reaches a degree of maturity for finite element of
conforming, nonconforming and mixed types (see
 \cite{Achchab,Ainsworth, Ainsworth 1,Ainsworth;Oden2000,
Babuska,Babuska;Strouboulis2001,Alonso,Bernardi and Verfurth,Braess and
Verfurth, Carstensen0,Carstensen;Bartels,Arnold0,
Carstensen.C;Hu.J;Orlando.A2007,Du;Xie,Kirby,Lovadina and
Stenberg,Verfurth-3} and the references therein). To the authors'
knowledge, no {\it a posteriori } estimation for the MFMFE method has been
proposed in the literature so far.

In this paper, we develop a novel technique to derive
residual-based {\it a posteriori } error estimation for the MFMFE method for
the porous media model in two or three-dimensional case. Since the
MFMFE method employs a special quadrature rule,
 its {\it a posteriori } error
estimator should include a term to control the error of
quadrature.  This is different from the standard analytical technique
based on the discrete $L^{2}$-inner product.  Moreover, we can not
directly utilize the analytical technique developed by {\it
Carstensen} in \cite{Carstensen;Bartels} for nonconforming finite
elements to estimate
\begin{equation*}
\displaystyle\inf_{\beta\in H^{1}(\Omega)}||\nabla\beta-K^{-1}{\bf
u}_{h}||,
\end{equation*}
because  the BDM1  finite
element for  the velocity approximation, ${\bf
u}_{h}$, does not  have the same continuity of mean of
 trace across the interior sides as   the nonconforming finite elements do.
% In fact, the mixed finite
%elements  have only continuity of the
%mean of the normal trace.
To overcome this difficulty, we shall  construct a locally
postprocessed approximation to the pressure solution, obtained by the MFMFE scheme, of a special
auxiliary problem,  and use a derived estimate of quadrature error.
%Note that our
%work here can be understood as a generalization of
%\cite{Carstensen0} to three space dimensions in case of the error of
%the quadrature vanishing, even so, our estimators are also different
%from ones in \cite{Carstensen0} (see remark \ref{newremark}). Moreover, we here consider
%more general boundary conditions with inhomogeneous Dirichlet  boundary and
%homogeneous Neumann one.
We note that the idea of postprocessing in this contribute follows from the works \cite{Lovadina and Stenberg,Vohralik1}.%, Du;XieJCM }. For the convection-diffusion-reaction model
%(\ref{convection-diffufsion-equations1}), Vohral\'{\i}k
%\cite{Vohralik1}, following an idea of postprocessing similar to
%that employed in \cite{Lovadina and Stenberg}, established residual
%{\it a posteriori } error estimates for lowest-order Raviart-Thomas mixed
%finite element discretizations  on simplicial meshes.

The rest of this paper is organized as follows. In section 2, we
introduce some notations and the continuous problem. Section 3 shows the MFMFE method.
 Section 4 includes main results.
Sections 5-6 are respectively devoted to the   a
posteriori error estimation and the analysis of efficiency.
Finally, we illustrate the performance of the obtained estimation in
section 7 by numerical experiments.

\section{Notations and continuous problem}
Let $\mathcal{T}_{h}$ be a shape regular triangulation  of $\Omega\subset{\mathbb{R}}^{d}$ in the sense
of \cite{CIARLET} which satisfies the angle condition, namely there
exists a constant $C_{0}>0$ such that for all $T\in\mathcal{T}_{h}$
\begin{equation*}
C_{0}^{-1}h_{T}^{d}\leq|T|\leq C_{0}h_{T}^{d},
\end{equation*}
where $h_{T} :={\rm diam}(T)$. Let $h$ be a piecewise constant function with $h|_T=h_T$.

We denote by $\varepsilon_{h}$ the set of element sides (or faces) in
$\mathcal{T}_{h}$,  by
$\varepsilon_{T}$ the set of sides (or faces) of element  $T\in\mathcal{T}_{h}$ ,    by
$\varepsilon_{h}^{0}$ and $\varepsilon_{D}$ respectively the sets of the interior
and Dirichlet boundary sides (or faces) of all elements in $\mathcal{T}_{h}$,
 by $\omega_{E}$ the union of all elements in
$\mathcal{T}_{h}$ sharing   side (or face) $E\in\varepsilon_{h}$, and   by $\mathcal{N}$ the set of nodes in
$\mathcal{T}_{h}$.

 For a domain $A\subset\mathbb{R}^{d}$, let
  $(\cdot,\cdot)_{A}$ be the $L^{2}$ inner product on $A$, and
  $<\cdot,\cdot>_{\partial A}$ %be inner product on $\partial A$ for
   the dual pair between $H^{-1/2}(\partial A)$ and
$H^{1/2}(\partial A)$. Let
$W_p^{k}(A)$ be the usual Sobolev space consisting of functions defined on $A$ with all derivatives of order up to  $k$ belonging to $L^p(A)$, with norm
$||\cdot||_{k,p,A}$. When $p=2$, $W_2^{k}(A)=:H^{k}(A)$  and $||\cdot||_{k,2,A}=:||\cdot||_{k,A}$, especially $||\cdot||_{0,A}=:||\cdot||_{A}$ for $k=0$. We omit the subscript $A$     if
$A=\Omega$.  For a tensor-valued function $M=(M_{ij})$, let
$||M||_{\alpha}=\max_{i,j}||M_{ij}||_{\alpha}$ for any norm
$||\cdot||_{\alpha}$. Introduce
\begin{equation*}
 {\bf H}({\rm
div};A):=\{{\bf v}\in L^{2}(A)^d:\nabla\cdot{\bf v}\in L^{2}(A)\},
\end{equation*}
%the space of functions with square-integrable weak divergences.
% and will also
and define the "broken Sobolev space" $$H^{1}(\cup\mathcal{T}_{h})
:=\{\varphi\in L^{2}(\Omega):\varphi|_{T}\in H^{1}(T),\forall
T\in\mathcal{T}_{h}\}. $$
We denote by $[v]|_{E}
:=(v|_{T_{+}})|_{E}-(v|_{T_{-}})|_{E}$ the jump of $v\in
H^{1}(\cup\mathcal{T}_{h})$ over an interior side $E :=T_{+}\cap
T_{-}$ with diameter $h_{E}: ={\rm diam}(E)$,  shared by the two
neighboring (closed) elements $T_{+},T_{-}\in\mathcal {T}_{h}$.
Especially, $[v]|_{E} :=(v|_{T})|_{E}$ if
$E\in\varepsilon_{T}\cap\Gamma_{D}$.

Since we consider two and three-dimensional cases ($d=2,3$) simultaneously, the Curl of a
 function $\psi\in H^{1}(\Omega)^{k}$ with  $k =1$ if $d=2$ and $k =3$ if $d=3$ is 
defined by
\begin{equation*}
{\rm Curl}\psi :=(-\partial_{2}\psi,\partial_{1}\psi)\ \ {\rm if\
d=2}\ \ {\rm and}\ \ {\rm Curl}{\bf \psi} :=\nabla\times{\bf \psi}\ \ {\rm if\
d=3},
\end{equation*}
where $\times$ denotes the usual vector product of two
vectors in $\mathbb{R}^{3}$. Given a unit normal
vector ${\bf n}_{E}=(n_1,\cdots,n_d)^T$ along the side $E$, we define the tangential
component of a vector ${\bf v}\in\mathbb{R}^{d}$ with respect to
${\bf n}_{E}$ by
\begin{equation*}
\gamma_{{\bf t}_{E}}({\bf v}) :=\left \{ \begin{array}{ll}
{\bf v}\cdot(-n_{2},n_{1})  &\quad   \mbox{if}\ \ d=2,\\
{\bf v}\times{\bf n}_{E} &\quad   \mbox{if}\ \ d=3.
 \end{array}\right.
\end{equation*}

Throughout the paper,
$
\nabla_{h} :H^{1}(\cup\mathcal{T}_{h})\rightarrow
(L^{2}(\Omega))^{d}
$ denotes the local version of differential operator
$\nabla$  defined by  $\nabla_{h}\varphi|_{T}
:=\nabla(\varphi|_{T})$  for all $\ T\
\in\mathcal{T}_{h}$.
We also use the notation $A\lesssim B$ to represent
$A\leq CB$ where $C$ is a generic, positive constant    independent  of the mesh size of $\mathcal{T}_{h}$. %depending only on the domain $\Omega$, the shape regularity constant, $C_0$, of $\mathcal{T}_{h}$ and $K$
%,  $C>0$. $C_{i} (i=1,2,\cdots)$ denote  positive constants  , and
 Moreover, $A\approx B$ abbreviates $A\lesssim B\lesssim A$.

Denote
\begin{equation*}
{\bf V}:=\{{\bf v}\in{\bf H}({\rm div};\Omega)\ :{\bf v}\cdot{\bf
n}=0\ \ {\rm on}\ \Gamma_{N}\},\ \ W:=L^{2}(\Omega),
\end{equation*}
then the weak formulation of the model (\ref{equation1}) is as follows:
Find ${\bf u}\in {\bf V}$, $p\in W$ such that
\begin{equation}\label{equation2}
(K^{-1}{\bf u},{\bf v})=(p,\nabla\cdot{\bf v})-<g,{\bf v}\cdot{\bf
n}>_{\Gamma_{D}},\ \ \ \forall\ {\bf v}\in\ {\bf V},
\end{equation}
\begin{equation}\label{equation3}
(\nabla\cdot{\bf u},w)=(f,w),\ \ \ \forall\ {w}\in\ {W}.
\end{equation}
It is well-known that this problem admits a unique solution \cite{Brezzi}.% [14,26].% that (\ref{equation2})-(\ref{equation3})

\section{Multipoint flux mixed finite element method} We follow the notations and definitions employed in
\cite{Yotov06,Yotov10} to describe the MFMFE method.
%Let ${\bf
%V}_{h}\times W_{h}$ be the lowest order ${\rm BDM}_{1}$ mixed finite
%element spaces on the simplicial meshes [13, 14].
Let $\hat{T}$ be the
reference element which is a unit triangle in two-dimensional case or unit
tetrahedron in   three-dimensional case, and   $P_{l}$  be the set of polynomials of degree $\leq l$.  The lowest order
${\rm BDM}_{1}$ mixed finite element spaces on $\hat{T}$ are defined as
\begin{equation*}
\hat{\bf V}(\hat{T})=P_{1}(\hat{T})^{d},\ \ \
\hat{W}(\hat{T})=P_{0}(\hat{T}).
\end{equation*}
Since $\hat{\bf v}\cdot  \hat{\bf n}_{\hat{e}}\in P_1({\hat{e}})$ for any $\hat{\bf v}\in \hat{\bf V}(\hat{T})$ and any edge (or face) $\hat{e}$ of $\hat{T}$,   the degrees of freedom for $\hat{\bf
V}(\hat{T})$ can be chosen to be the values of $\hat{\bf
v}\cdot\hat{\bf n}_{\hat{e}}$ at any two points on each edge
$\hat{e}$ of $\hat{T}$ if $\hat{T}$ is the unit triangle, or any
three points on each face $\hat{e}$ of $\hat{T}$ if $\hat{T}$ is the
unit tetrahedron \cite{Brezzi,Brezzi;Fortin1985}. In the MFMFE method,   these points are chosen to be
the vertices of $\hat{e}$ for the requirement of accuracy and certain
orthogonality   for the trapezoidal
quadrature rules. Such a choice allows for local
velocity elimination and leads to a cell-centered stencil for the
pressure \cite{Yotov06,Yotov10}.

The lowest
order ${\rm BDM}_{1}$ spaces on $\mathcal{T}_{h}$ are given by
\begin{equation*}
\begin{array}{lll}
{\bf V}_{h} &:&=\{{\bf v}\in {\bf V}\ :\ \ \ {\bf
v}|_{T}=\frac{1}{J_{T}}DF_{T}\hat{\bf v}\circ F_{T}^{-1},\ \ \hat{\bf v}\in\hat{\bf
V}(\hat{T})\ \ \ \forall\ T\in\mathcal{T}_{h}\},\vspace{2mm}\\
{W}_{h} &:&=\{w\in {W}\ :\ \ \ {w}|_{T}=\hat{w}\circ F_{T}^{-1},\ \
\hat{w}\in\hat{W}(\hat{T})\ \ \ \forall\ T\in\mathcal{T}_{h}\},
\end{array}
\end{equation*}
where $F_{T}^{-1}$ is the inverse mapping of the bijection $F_{T} :
\hat{T}\rightarrow T$, $DF_{T}$ is the Jacobian matrix with respect
to $F_{T}$ on the element $T$ with  $J_{T}=|det(DF_{T})|$.  Note that the vector transformation ${\bf
v}=\frac{1}{J_{T}}DF_{T}\hat{\bf v}\circ F_{T}^{-1}$ is is known as the Piola
transformation.

For ${\bf q},{\bf v}\in{\bf V}_{h}$, it holds
\begin{equation*}
\begin{array}{lll}
\displaystyle\int_{T}K^{-1}{\bf q}\cdot{\bf v}d{\bf
x}&=&\displaystyle\int_{\hat{T}}\hat{K}^{-1}\frac{1}{J_{T}}DF_{T}\hat{\bf
q}\cdot\frac{1}{J_{T}}DF_{T}\hat{\bf v}J_{T}d\hat{\bf
x}\vspace{2mm}\\
&=&\displaystyle\int_{\hat{T}}\frac{1}{J_{T}}(DF_{T})^{\rm
T}\hat{K}^{-1}DF_{T}\hat{\bf q}\cdot\hat{\bf v}d\hat{\bf
x}\vspace{2mm}\\
&=&\displaystyle\int_{\hat{T}}\mathcal{K}^{-1}\hat{\bf
q}\cdot\hat{\bf v}d\hat{\bf x}
\end{array}
\end{equation*}
with
$
\mathcal{K} :=J_{T}DF_{T}^{-1}\hat{K}(DF_{T}^{-1})^{\rm T}.
$
The quadrature formula on an element $T$ is then defined as \cite{Yotov06,Yotov10}
\begin{equation}\label{quadrature}
(K^{-1}{\bf q},{\bf v})_{Q,T} :=(\mathcal{K}^{-1}\hat{\bf
q},\hat{\bf v})_{\hat{Q},\hat{T}}
:=\frac{|\hat{T}|}{s}\sum\limits_{i=1}^{s}\mathcal{K}^{-1}(\hat{\bf
r}_{i})\hat{\bf q}(\hat{\bf r}_{i})\cdot\hat{\bf v}(\hat{\bf
r}_{i}),
\end{equation}
where $\hat{\bf r}_{i}$ ($i=1,2,\cdots,s$) are the corresponding vertices of $\hat{T}$ with
$s=3$ for the unit triangle  and $s=4$ for the unit tetrahedron.

Define the global quadrature
formula as
\begin{equation}\label{quadrature-global}
(K^{-1}{\bf q},{\bf
v})_{Q}=\sum\limits_{T\in\mathcal{T}_{h}}(K^{-1}{\bf q},{\bf
v})_{Q,T},
\end{equation}
%The integration on any element $T$ is performed by mapping to the
%reference element $\hat{T}$, and the quadrature rule is defined on
%$\hat{T}$. Using the definition of the finite element spaces, we
%have
then the MFMFE method is formulated as follows: Find ${\bf u}_{h}\in
{\bf V}_{h}$ and $p_{h}\in W_{h}$ such that
\begin{equation}\label{equation4}
(K^{-1}{\bf u}_{h},{\bf v}_{h})_{Q}=(p_{h},\nabla\cdot{\bf
v}_{h})-<g,{\bf v}_{h}\cdot{\bf n}>_{\Gamma_{D}},\ \ \ \forall\ {\bf
v}_{h}\in\ {\bf V}_{h},
\end{equation}
\begin{equation}\label{equation5}
(\nabla\cdot{\bf u}_{h},w_{h})=(f,w_{h}),\ \ \ \forall\ w_{h}\in\
W_{h}.
\end{equation}
The existence and uniqueness of the solution to the
scheme (\ref{equation4})-(\ref{equation5}) follow from   \cite{Yotov06,Yotov10}.
%Although we do not describe
%how the MFMFE method reduces a system for the pressures at the cell
%centers, because this is not our emphasis in this paper.
As shown in  \cite{Yotov06,Yotov10},
  the algebraic system that arises from (\ref{equation4})-(\ref{equation5}) is of the form
\begin{equation}\label{equation6}
\left(\begin{array}{c}\ \ A\ \ \ \ B^{\rm T}\\-B\ \ \ 0
\end{array}\right)\left(\begin{array}{c}\ U\ \\ \ P
\end{array}\right)=\left(\begin{array}{c}\ G\\ \ F
\end{array}\right),
\end{equation}
where $A=(a_{ij}),\ B=(b_{lj})$ with $a_{ij}=(K^{-1}{\bf v}_{j},{\bf v}_{i})_{Q}$ and
$b_{lj}=-(\nabla\cdot{\bf v}_{j}, w_{l})$, and $\{{\bf v}_{i}\}$,
$\{w_{l}\}$ are respectively the bases of ${\bf V}_{h}$ and $W_{h}$.
The matrix $A$ is block-diagonal with symmetric and positive definite blocks, and
the local elimination of $U$   leads to a system for $P$ with a symmetric
and positive definite matrix $BA^{-1}B^{T}$. For the details, we
refer to \cite{Yotov06,Yotov10}.

\section{Main results}  Let
 $\eta_{h}$ be the discretization indicator defined  by
\begin{equation}\label{eta-h}
\eta_{h}^{2} :=||h(f-\nabla\cdot{\bf
u}_{h})||^{2}+\sum\limits_{T\in\mathcal{T}_{h}}\sum\limits_{E\in\varepsilon_{T}}h_{E}J_{{\bf
t}_E}^{2},
\end{equation}
where
\begin{equation}\label{J-tE}
J_{{\bf t}_E}^{2} :=\left \{ \begin{array}{ll}
||[\gamma_{{\bf t}_{E}}(K^{-1}{\bf u}_{h})]||_{E}^{2} &   \mbox{if}\ \ E\in\varepsilon_{h}^{0}\cap\partial T,\\
||\gamma_{{\bf t}_{E}}(K^{-1}{\bf u}_{h})-\partial g/\partial
s||_{E}^{2}+h_{E}^{2}||\frac{\partial^{2}g}{\partial
s^{2}}||_{E}^{2}&   \mbox{if}\ \ E\in\partial
T\cap\varepsilon_{D},\\
0&  \mbox{if}\ \ E\in\partial T\cap\Gamma_{N},
 \end{array}\right.
\end{equation}
and $\partial g/\partial s$ and
$\partial^{2}g/\partial s^{2}$ denote respectively      the first and
second order tangential derivatives of function $g\in H^{2}(E)$ along   side $E$.
Introduce the quadrature indicator
\begin{equation}\label{eta-Q}
\eta_{Q}^{2} :=\sum\limits_{T\in\mathcal{T}_{h}}h_{T}^{2}||{\bf
u}_{h}||_{1,T}^{2}.
\end{equation}
We note this indicator is  owing to the use of
the special quadrature formula (\ref{quadrature}) in the MFMFE method.

We now state in Theorems \ref{equation7}-\ref{newpressure}  {\it a posteriori } error estimates for the  errors of
velocity   and pressure in $L^{2}-$norm, respectively.

\begin{theorem}\label{equation7}
Let $({\bf u},p)\in{\bf V}\times W$ be the weak solution of the
continuous problem (\ref{equation2})-(\ref{equation3}), and $({\bf
u}_{h},p_{h})\in{\bf V}_{h}\times W_{h}$ be the solution of the
MFMFE method (\ref{equation4})-(\ref{equation5}). Assume $K^{-1}\in W_{\infty}^{1}(\mathcal{T}_{h})$. Then it holds
%, $\eta_{h}$ and $\eta_{Q}$ be the discretization and quadrature indicators, respectively. % which only
%depends on $K,\Omega$, and the shape regularity of the elements
\begin{equation}\label{equation8}
||K^{-1/2}({\bf u}-{\bf u}_{h})||\lesssim (\eta_{h}^{2}+\eta_{Q}^{2})^{1/2}.
\end{equation}
\end{theorem}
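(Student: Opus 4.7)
The plan is to combine a Helmholtz-type decomposition of the velocity error (in the spirit of Carstensen) with a locally postprocessed scalar potential for $K^{-1}{\bf u}_h$, together with a separately derived quadrature error estimate. Starting from
$$\|K^{-1/2}({\bf u}-{\bf u}_h)\|^2 = (K^{-1}({\bf u}-{\bf u}_h),{\bf u}-{\bf u}_h),$$
I would decompose $K^{-1}({\bf u}-{\bf u}_h) = \nabla\alpha + {\rm Curl}\,\phi$ with $\alpha$ vanishing on $\Gamma_D$ and with $(\alpha,\phi)\in H^1\times H^1$ stable, in the sense that each is bounded in $H^1$ by $\|K^{-1/2}({\bf u}-{\bf u}_h)\|$. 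This reduces the proof to estimating the gradient pairing $(\nabla\alpha,K^{-1}({\bf u}-{\bf u}_h))$ and the curl pairing $({\rm Curl}\,\phi,K^{-1}({\bf u}-{\bf u}_h))$ separately.

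For the curl part, the contribution from ${\bf u}$ is eliminated by noting that $K^{-1}{\bf u}=-\nabla p$ is curl-free, so only $({\rm Curl}\,\phi,K^{-1}{\bf u}_h)$ remains. Integrating elementwise by parts against a Scott--Zhang interpolant of $\phi$ and matching $p$ on $\Gamma_D$ produces exactly the tangential jump terms $[\gamma_{{\bf t}_E}(K^{-1}{\bf u}_h)]$ on interior edges and $\gamma_{{\bf t}_E}(K^{-1}{\bf u}_h)-\partial g/\partial s$ together with $h_E \partial^2 g/\partial s^2$ on Dirichlet edges, which sum, via standard edge trace inequalities and $H^1$-stability of the interpolant, into $\sum_T\sum_{E\in\varepsilon_T} h_E J_{{\bf t}_E}^2$. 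For the gradient part, since BDM1 velocities do not carry the mean tangential continuity needed for a direct nonconforming argument, the plan is to construct an elementwise postprocessed pressure $\tilde p_h$ (following the ideas of Lovadina--Stenberg and Vohral\'{i}k) whose mean on each $T$ equals $p_h|_T$ and whose gradient is linked to $-K^{-1}{\bf u}_h$ via a local projection identity. Writing $(\nabla\alpha,K^{-1}({\bf u}-{\bf u}_h))$ as $(\nabla\alpha,-\nabla p - \nabla_h\tilde p_h) + (\nabla\alpha,\nabla_h\tilde p_h - K^{-1}{\bf u}_h)$ and testing the continuous equation against $\alpha$ and the MFMFE equation (\ref{equation4}) against an interpolant of $\alpha$, while swapping between $(K^{-1}\cdot,\cdot)$ and $(K^{-1}\cdot,\cdot)_Q$, produces the volumetric residual $\|h(f-\nabla\cdot{\bf u}_h)\|$, the Dirichlet-edge contributions contained in $J_{{\bf t}_E}$, and the quadrature defect. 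The latter is controlled via a Bramble--Hilbert argument on $\hat T$ for the vertex rule (\ref{quadrature}) applied to BDM1 functions, delivering under $K^{-1}\in W^{1}_{\infty}(\mathcal T_h)$ an estimate of the form
$$|(K^{-1}{\bf q},{\bf v}) - (K^{-1}{\bf q},{\bf v})_Q| \lesssim \Bigl(\sum_T h_T^2\|{\bf q}\|_{1,T}^2\Bigr)^{1/2}\|{\bf v}\|,$$
which is precisely where $\eta_Q$ enters the bound.

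The hard part will be the gradient piece, specifically the construction and exploitation of $\tilde p_h$. It must be chosen so that (i) it is locally computable from the MFMFE output, (ii) the interior jumps $[\tilde p_h]|_E$ and the mismatch $\nabla_h\tilde p_h + K^{-1}{\bf u}_h$ are dominated elementwise by the prescribed indicators $h_E J_{{\bf t}_E}^2$ plus the quadrature defect, and (iii) the boundary contribution on $\Gamma_D$ can be recast in terms of the second tangential derivative $\partial^2 g/\partial s^2$ appearing in (\ref{J-tE}) rather than a normal derivative of $g$. Balancing these three requirements while keeping all constants dependent only on shape-regularity and on $\|K^{-1}\|_{1,\infty}$, and arranging that only the postprocessing residual (not $\tilde p_h$ itself) appears in the final estimate, will require care in the choice of local polynomial degree and of the moment conditions defining $\tilde p_h$.
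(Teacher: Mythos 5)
Your skeleton --- a Helmholtz-type decomposition, a locally postprocessed scalar potential, and a separate quadrature error estimate --- is exactly the paper's architecture, but the step you yourself flag as ``the hard part'' is where the argument has a genuine gap, and the paper's resolution is a specific device you have not supplied. You propose to postprocess a potential $\tilde p_h$ directly from the MFMFE output, with $\nabla_h\tilde p_h$ ``linked to $-K^{-1}{\bf u}_h$ via a local projection identity'' and with its interior jumps dominated by $J_{{\bf t}_E}$. This fails for two reasons. First, for a BDM1 field ${\bf u}_h$ (a full vector-valued linear on each $T$), $K^{-1}{\bf u}_h$ is in general not an elementwise gradient, so no exact local potential exists, and the residual of any projection-based surrogate is not obviously controlled by the prescribed indicators. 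Second, and decisively, the property that makes the Carstensen--Bartels infimum lemma applicable is the \emph{exact} vanishing of $\int_E[\,l_h\,]\,ds$ on interior faces; in the paper this identity is obtained by testing a mixed system with the ${\rm RT}_0$ edge basis function ${\bf v}_E$, and it is destroyed if that system is the quadrature-perturbed scheme (\ref{equation4}). The paper circumvents both obstacles by introducing an \emph{auxiliary, exactly integrated} ${\rm RT}_0$ problem (\ref{equation26})--(\ref{equation27}) with the piecewise mean coefficient $\overline{K^{-1}}$, postprocessing \emph{its} solution into $l_h$ via (\ref{equation34})--(\ref{equation35}) (solvable exactly because $\overline{K^{-1}}\tilde{\bf u}_h$ is a gradient on each $T$), and paying for the detour with Lemma \ref{equation27+1}: $||\overline{K^{-1}}^{1/2}(\tilde{\bf u}_h-\Pi_0{\bf u}_h)||\lesssim\eta_Q$. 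That lemma is the only place the quadrature defect enters, and only against the ${\rm RT}_0$ test function $\tilde{\bf u}_h-\Pi_0{\bf u}_h$ --- which matters, because the bound you quote (with $||{\bf v}||$ rather than $||{\bf v}||_{1,T}$ on the right, under $K^{-1}\in W^1_\infty$ only) is (\ref{equation12+2}) and holds only for ${\bf v}_h\in{\bf V}_h^0$; your plan of inserting the quadrature defect while testing (\ref{equation4}) against a BDM1 interpolant of $\alpha$ would need the $W^2_\infty$ estimate (\ref{pressure7}), which is not available under the hypotheses of Theorem \ref{equation7}.

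Two further discrepancies. The paper handles the gradient component with the weighted decomposition ${\bf u}-{\bf u}_h=K\nabla z+{\rm Curl}\,\psi$ and only the quadrature-free divergence equation (\ref{equation5}), yielding $||K^{1/2}\nabla z||\lesssim||h(f-\nabla\cdot{\bf u}_h)||$ without ever interpolating $K\nabla\alpha$ into ${\bf V}_h$ (your $\Pi$-interpolant of $\alpha$ would require $K\nabla\alpha\in{\bf H}({\rm div};\Omega)\cap L^\varrho$, which you have not justified and which the paper reserves for the duality argument in Theorem \ref{newpressure}). And your treatment of the curl pairing by elementwise integration by parts produces, besides the tangential jumps, the volume term $(\phi,{\rm Curl}_h(K^{-1}{\bf u}_h))$, i.e.\ an estimator contribution $||h\,{\rm Curl}_h(K^{-1}{\bf u}_h)||$ that is absent from $\eta_h$ as defined in (\ref{eta-h})--(\ref{J-tE}); the paper's route through $l_h$, whose broken gradient is curl-free by construction, is precisely what eliminates this term (cf.\ Remark \ref{newremark}).
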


\begin{theorem}\label{newpressure} Assume $K^{-1}\in W_{\infty}^{2}(\mathcal{T}_{h})$. 
Under the
assumptions of Theorem \ref{equation7}, it holds
\begin{equation}\label{newpressure1}
||Q_{h}p-p_{h}||\lesssim h_{\rm max}(\eta_{h}+\eta_{Q})+||h(f-\nabla\cdot{\bf u}_{h})||,
\end{equation}
\begin{equation}\label{newpressure2}
||p-p_{h}||\lesssim h_{\rm max}(\eta_{h}+\eta_{Q})+||hK^{-1}{\bf
u}_{h}||+||h(f-\nabla\cdot{\bf u}_{h})||.
\end{equation}
Here  $h_{\rm max} :=\max_{T\in\mathcal{T}_{h}}h_{T}$, and $Q_{h}$ denotes the $L^{2}-$projection operator
onto $W_{h}$. 
\end{theorem}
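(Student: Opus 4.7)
My plan is a duality argument against the canonical ${\rm BDM}_1$ interpolant of a smooth dual velocity. Consider the auxiliary scalar problem
\begin{equation*}
-\nabla\cdot(K\nabla\phi)=Q_h p-p_h \text{ in } \Omega,\quad \phi=0 \text{ on } \Gamma_D,\quad K\nabla\phi\cdot{\bf n}=0 \text{ on } \Gamma_N,
\end{equation*}
and invoke the usual $H^2$-regularity shift $\|\phi\|_{2}\lesssim\|Q_h p-p_h\|$. Setting ${\bf w}:=-K\nabla\phi\in{\bf V}$ gives $\nabla\cdot{\bf w}=Q_h p-p_h\in W_h$ and $\|{\bf w}\|_{1}\lesssim\|Q_h p-p_h\|$. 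Let ${\bf v}_h:=\Pi_h{\bf w}\in{\bf V}_h$, where $\Pi_h$ is the canonical ${\rm BDM}_1$ projection; by the commuting diagram $\nabla\cdot\Pi_h=Q_h\nabla\cdot$, we have $\nabla\cdot{\bf v}_h=Q_h p-p_h$.

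Subtracting (\ref{equation4}) from (\ref{equation2}) with test function ${\bf v}_h$, and using $(p-p_h,\nabla\cdot{\bf v}_h)=(Q_h p-p_h,\nabla\cdot{\bf v}_h)=\|Q_h p-p_h\|^{2}$, I obtain
\begin{equation*}
\|Q_h p-p_h\|^{2}=(K^{-1}({\bf u}-{\bf u}_h),{\bf v}_h)+\sigma({\bf u}_h,{\bf v}_h),
\end{equation*}
where $\sigma({\bf q},{\bf v}):=(K^{-1}{\bf q},{\bf v})-(K^{-1}{\bf q},{\bf v})_Q$. I then split ${\bf v}_h={\bf w}+(\Pi_h{\bf w}-{\bf w})$ to isolate the BDM approximation remainder. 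The remainder piece $(K^{-1}({\bf u}-{\bf u}_h),\Pi_h{\bf w}-{\bf w})$ is bounded by Theorem \ref{equation7} together with the $O(h)$ BDM approximation, yielding $h_{\max}(\eta_h+\eta_Q)\|Q_h p-p_h\|$. For the smooth piece, $(K^{-1}({\bf u}-{\bf u}_h),{\bf w})=-({\bf u}-{\bf u}_h,\nabla\phi)$ is integrated by parts; the boundary integrals vanish because $\phi=0$ on $\Gamma_D$ and both ${\bf u}\cdot{\bf n}$ and ${\bf u}_h\cdot{\bf n}$ vanish on $\Gamma_N$ (the latter since ${\bf V}_h\subset{\bf V}$). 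Using $\nabla\cdot{\bf u}=f$ and the orthogonality $f-\nabla\cdot{\bf u}_h\perp W_h$ implied by (\ref{equation5}) and $\nabla\cdot{\bf u}_h\in W_h$, this reduces to $(f-\nabla\cdot{\bf u}_h,\phi-Q_h\phi)$, which is bounded by $\|h(f-\nabla\cdot{\bf u}_h)\|\,\|\phi\|_{1}\lesssim\|h(f-\nabla\cdot{\bf u}_h)\|\,\|Q_h p-p_h\|$ via a standard $L^2$-projection estimate.

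The delicate term is the quadrature error $\sigma({\bf u}_h,\Pi_h{\bf w})$. Here I invoke a local quadrature-error estimate (to be established earlier, using exactness of the vertex rule on $P_1$ together with the hypothesis $K^{-1}\in W^2_\infty(\mathcal{T}_h)$) of the form $|\sigma_T({\bf q}_h,{\bf r}_h)|\lesssim h_T^{2}\|{\bf q}_h\|_{1,T}\|{\bf r}_h\|_{1,T}$. The critical move is to \emph{not} absorb $\|\Pi_h{\bf w}\|_{1,T}$ through an inverse estimate, which would forfeit one factor of $h$; instead, $H^1$-stability of the BDM interpolant, $\|\Pi_h{\bf w}\|_{1,T}\lesssim\|{\bf w}\|_{1,T}$, together with Cauchy--Schwarz and $h_T\le h_{\max}$, produces $|\sigma({\bf u}_h,\Pi_h{\bf w})|\lesssim h_{\max}\eta_Q\|{\bf w}\|_{1}\lesssim h_{\max}\eta_Q\|Q_h p-p_h\|$. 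Dividing through by $\|Q_h p-p_h\|$ proves (\ref{newpressure1}).

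For (\ref{newpressure2}), apply the triangle inequality $\|p-p_h\|\le\|p-Q_h p\|+\|Q_h p-p_h\|$, and estimate $\|p-Q_h p\|_T\lesssim h_T\|\nabla p\|_T=h_T\|K^{-1}{\bf u}\|_T$ elementwise using ${\bf u}=-K\nabla p$. Writing $K^{-1}{\bf u}=K^{-1}{\bf u}_h+K^{-1}({\bf u}-{\bf u}_h)$ and invoking Theorem \ref{equation7} for the second term yields $\|p-Q_h p\|\lesssim\|hK^{-1}{\bf u}_h\|+h_{\max}(\eta_h+\eta_Q)$; together with (\ref{newpressure1}) this closes the argument. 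The principal obstacle throughout is the calibration of the quadrature-error bound to actually expose the extra $h_{\max}$: unlike in the velocity estimate of Theorem \ref{equation7}, one cannot afford the loss incurred by an inverse estimate on the test function, so the argument truly hinges on inheriting $H^1$-regularity of ${\bf w}$ from the dual problem.
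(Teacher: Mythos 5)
Your proposal is correct and follows essentially the same route as the paper: the same dual problem with right-hand side $Q_hp-p_h$, the same choice of test function $\Pi(K\nabla\phi)$ with the commuting property, the same splitting into an interpolation remainder, an integration-by-parts term reduced to $(f-\nabla\cdot{\bf u}_h,\phi-Q_h\phi)$, and the same treatment of the quadrature error via the $O(h_T^2)$ bound combined with $H^1$-stability of $\Pi$ rather than an inverse estimate. The only (harmless, arguably preferable) deviation is that you impose mixed boundary conditions on the dual problem so that $K\nabla\phi\cdot{\bf n}=0$ on $\Gamma_N$, ensuring the test function genuinely lies in ${\bf V}_h$, whereas the paper poses $\phi=0$ on all of $\partial\Omega$.
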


\begin{remark}\ We note that the two terms $||h(f-\nabla\cdot{\bf u}_{h})||$ and $\displaystyle\{\sum\limits_{E\in\varepsilon_{D}}h_{E}^{3}||\frac{\partial^{2}g}{\partial s^{2}}||_{E}^{2}\}^{1/2}$
in $\eta_{h}$ in the estimator $\eta_{h}$ are of high order with respect to the
lowest order scheme, which  are usually omitted in
computation. In fact, from (\ref{equation5}) it follows $\nabla\cdot{\bf u}_{h}=Q_{h}f$, and
$||h(f-\nabla\cdot{\bf u}_{h})||=||h(f-Q_hf)||$ turns out to be an oscillation term of high order.
%for the
%lowest order scheme. In addition, the term
%$h_{E}^{3}||\frac{\partial^{2}g}{\partial s^{2}}||_{E}^{2}$ for all
%$E\in\varepsilon_{D}$ is still of high order with respect to the
%lowest order scheme. Therefore, these terms
\end{remark}

\begin{remark}\label{newremark} The above  estimates (\ref{equation8})-(\ref{newpressure2}) also apply to the original mixed finite element discretization where the special quadrature rule (\ref{quadrature}) is not used in the
 scheme (\ref{equation4})-(\ref{equation5}). In this case,  the estimator $\eta_{Q}$ is not involved, and then $\eta_{Q}=0$ in the estimates (\ref{equation8})-(\ref{newpressure2}). In this sense, our work
can be regarded as a generalization of Carstensen's \cite{Carstensen0} to
the three-dimensional case.  We note that   our estimator $\eta_h$ is a bit different
from that in \cite{Carstensen0} due to no occurrence of the term
$||h{\rm Curl}_{h}(K^{-1}{\bf u}_{h})||$ (${\rm Curl}_{h}$ denotes the piecewise ${\rm Curl}$ operator acting on element by element in $\mathcal{T}_{h}$). Here we also
consider more general boundary conditions.
\end{remark}

We finally state in Theorem \ref{equation9} the efficiency of the {\it a posteriori } error
estimators. Note that the
efficiency of a reliable   {\it a posteriori } error estimator  means that its converse estimate holds up to high
order terms and different multiplicative constants. For the sake of
simplicity, we assume that $K^{-1}$ is a matrix of piecewise polynomial
functions.

\begin{theorem}\label{equation9}
Under the assumptions of Theorems \ref{equation7}-\ref{newpressure}, it holds
\begin{equation*}%\label{equation10}
\eta_{h}+\eta_{Q}+h_{\rm max}^{-1}||hK^{-1}{\bf u}_{h}||\lesssim ||K^{-1/2}({\bf u}-{\bf
u}_{h})||+||h^{-1}(p-p_{h})||+h.o.t..
\end{equation*}
where $ h.o.t.$ denotes some high-order term depending on given data.
%\begin{equation*}%\label{pressureefficincy}
%h_{\rm max}(\eta_{h}+\eta_{Q})+||hK^{-1}{\bf u}_{h}||\lesssim h_{\rm max}\left(||K^{-1/2}({\bf u}-{\bf
%u}_{h})||+||h^{-1}(p-p_{h})||\right)+h.o.t..
%\end{equation*}
\end{theorem}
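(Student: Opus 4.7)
My plan is to prove the converse inequality estimator-by-estimator: each of the four contributions on the left-hand side --- the volume residual $\|h(f-\nabla\cdot{\bf u}_h)\|$, the tangential-jump sum in $\eta_h$, the quadrature indicator $\eta_Q$, and the scaled post-processing term $h_{\max}^{-1}\|h K^{-1}{\bf u}_h\|$ --- will be absorbed into one of the two error norms on the right-hand side, up to higher-order data terms. The volume residual is immediate: by (\ref{equation5}) one has $\nabla\cdot{\bf u}_h = Q_h f$, so $\|h(f-\nabla\cdot{\bf u}_h)\| = \|h(f-Q_h f)\|$ is pure data oscillation. For the tangential jumps I would exploit the fact that $-K^{-1}{\bf u} = \nabla p$ is curl-free, so $\gamma_{{\bf t}_E}(K^{-1}{\bf u})$ has a single-valued trace across each interior side and equals $-\partial g/\partial s$ on $\Gamma_D$. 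This rewrites $[\gamma_{{\bf t}_E}(K^{-1}{\bf u}_h)]|_E$ as $-[\gamma_{{\bf t}_E}(K^{-1}({\bf u}-{\bf u}_h))]|_E$, and similarly on Dirichlet sides. The standard Verfurth-type edge-bubble machinery --- multiply the jump by the edge bubble $b_E$, extend tangentially to $\omega_E$, integrate by parts element by element, then apply the usual inverse and scaling estimates --- produces a local bound $h_E J_{{\bf t}_E}^2 \lesssim \|K^{-1/2}({\bf u}-{\bf u}_h)\|_{\omega_E}^2 + h.o.t.$, with the $h.o.t.$ on Dirichlet sides absorbing $h_E^3\|\partial^2 g/\partial s^2\|_E^2$.

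For the quadrature indicator $\eta_Q^2=\sum_T h_T^2\|{\bf u}_h\|_{1,T}^2$, I split ${\bf u}_h = ({\bf u}_h - \Pi_h{\bf u}) + \Pi_h{\bf u}$ with $\Pi_h$ the canonical BDM1 interpolant. On the discrete piece the inverse estimate gives
\begin{equation*}
h_T\|{\bf u}_h - \Pi_h{\bf u}\|_{1,T} \lesssim \|{\bf u}_h - \Pi_h{\bf u}\|_T \le \|{\bf u}-{\bf u}_h\|_T + \|{\bf u}-\Pi_h{\bf u}\|_T,
\end{equation*}
and $\|{\bf u}-{\bf u}_h\|_T$ is controlled by $\|K^{-1/2}({\bf u}-{\bf u}_h)\|_T$ thanks to the uniform ellipticity (\ref{newadd}). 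The remaining piece $h_T\|\Pi_h{\bf u}\|_{1,T}$ together with the interpolation error $\|{\bf u}-\Pi_h{\bf u}\|_T$ falls within the $h.o.t.$ contribution once the data smoothness underlying Theorems \ref{equation7}--\ref{newpressure} is assumed.

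The hard step, and the one on which I expect to spend most effort, is the bound
\begin{equation*}
h_{\max}^{-1}\|h K^{-1}{\bf u}_h\| \lesssim \|K^{-1/2}({\bf u}-{\bf u}_h)\| + \|h^{-1}(p-p_h)\| + h.o.t.,
\end{equation*}
because the left-hand side is not intrinsically small and must be matched against the scaled pressure error. My plan is to invoke the locally postprocessed pressure $\tilde p_h$ introduced earlier in the paper, which is a low-degree piecewise polynomial satisfying $-K\nabla\tilde p_h = {\bf u}_h$ elementwise (modulo the quadrature rule) and whose piecewise mean coincides with $p_h$. Since $\tilde p_h - p_h$ then has zero mean on each $T$, the inverse inequality for polynomials yields $h_T\|K^{-1}{\bf u}_h\|_T = h_T\|\nabla\tilde p_h\|_T \lesssim \|\tilde p_h - p_h\|_T$; squaring, summing, and applying the triangle inequality gives $\|h K^{-1}{\bf u}_h\| \lesssim \|\tilde p_h - p\| + \|p - p_h\|$. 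The reliability analysis already controls $\|\tilde p_h - p\|$ by $h_{\max}\|K^{-1/2}({\bf u}-{\bf u}_h)\|$ plus data $h.o.t.$, and the elementary inequality $h_{\max}^{-1}\|p - p_h\| \le \|h^{-1}(p-p_h)\|$ takes care of the remaining piece after division by $h_{\max}$. The delicate technical point is the mismatch between the exact postprocessing identity $-K\nabla\tilde p_h = {\bf u}_h$ and the quadrature used in (\ref{equation4}); I would reuse the quadrature error estimate developed in the reliability proof to control this discrepancy, which is precisely the mechanism by which $\eta_Q$ enters (\ref{newpressure1})--(\ref{newpressure2}).
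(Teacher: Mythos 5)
Your overall term-by-term strategy and your treatment of the first two contributions match the paper: the volume residual is data oscillation since $\nabla\cdot{\bf u}_{h}=Q_{h}f$, and the tangential jumps are handled by the standard edge-bubble argument exploiting that $K^{-1}{\bf u}=-\nabla p$ is curl-free (this is the paper's Lemma \ref{efficiency jump}, invoked as ``standard analytical techniques''). The problems are in your last two steps, and both stem from missing the single elementary observation that drives the paper's proof: because $p_{h}$ is piecewise constant, $\nabla p_{h}|_{T}=0$, so $K^{-1}{\bf u}_{h}=K^{-1}{\bf u}_{h}+\nabla p_{h}$ is itself the elementwise residual of the constitutive law ${\bf u}+K\nabla p=0$. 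Multiplying by the element bubble $\psi_{T}$, inserting ${\bf u}=-K\nabla p$, and integrating by parts (the bubble kills the boundary term) gives
\begin{equation*}
||K^{-1}{\bf u}_{h}||_{T}\lesssim ||K^{-1/2}({\bf u}-{\bf u}_{h})||_{T}+h_{T}^{-1}||p-p_{h}||_{T},
\end{equation*}
which, combined with the inverse inequality $h_{T}||{\bf u}_{h}||_{1,T}\lesssim ||K^{-1}{\bf u}_{h}||_{T}$, disposes of $\eta_{Q}$ and $h_{\rm max}^{-1}||hK^{-1}{\bf u}_{h}||$ simultaneously, with no reference to the exact solution's regularity.

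Your route for $\eta_{Q}$ leaves the remainder $h_{T}||\Pi_{h}{\bf u}||_{1,T}+||{\bf u}-\Pi_{h}{\bf u}||_{T}\lesssim h_{T}||{\bf u}||_{1,T}$, and this is not a higher-order term: for the lowest-order scheme $h||{\bf u}||_{1}$ is of exactly the same order as the velocity error, it depends on the solution rather than on given data, and it is infinite near the re-entrant corner in the paper's own test problems where ${\bf u}\notin H^{1}$. So that step does not yield efficiency in the claimed sense. Your argument for $h_{\rm max}^{-1}||hK^{-1}{\bf u}_{h}||$ has a second, independent defect: a postprocessed $\tilde{p}_{h}$ with $-K\nabla\tilde{p}_{h}={\bf u}_{h}$ elementwise does not exist in general, because for a BDM1 field and variable (or merely non-scalar) $K$ the vector $K^{-1}{\bf u}_{h}$ need not be curl-free on $T$; the paper's postprocessing (\ref{equation34}) is applied to the RT$_0$ auxiliary velocity $\tilde{\bf u}_{h}$ with the piecewise-constant coefficient $\overline{K^{-1}}$ precisely so that the local Neumann problems are solvable, and it is used only in the reliability proof. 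Moreover, the bound $||\tilde{p}_{h}-p||\lesssim h_{\rm max}||K^{-1/2}({\bf u}-{\bf u}_{h})||+h.o.t.$ that you invoke is not established anywhere in the paper. Replacing both steps by the bubble estimate above repairs the proof.
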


\section{A posteriori error analysis} This section is devoted to the proofs of Theorems \ref{equation7}-\ref{newpressure}.

Introduce the global quadrature error $\sigma(K^{-1}{\bf
u}_{h},{\bf v}_{h})$ and the element quadrature error $\sigma_{T}(K^{-1}{\bf u}_{h},{\bf v}_{h})$ as follows:
\begin{equation}\label{quadrature-error}
\sigma(K^{-1}{\bf u}_{h},{\bf v}_{h})|_{T}= \sigma_{T}(K^{-1}{\bf u}_{h},{\bf v}_{h}) :=(K^{-1}{\bf
u}_{h},{\bf v}_{h})_{T}-(K^{-1}{\bf u}_{h},{\bf v}_{h})_{Q,T}, ,\ \ {\rm for\ all}\ \ T\in\mathcal{T}_{h}.
\end{equation}
%\begin{equation*}
%\sigma(K^{-1}{\bf u}_{h},{\bf v}_{h})|_{T}=\sigma_{T}(K^{-1}{\bf
%u}_{h},\bf{v}_{h}),\ \ {\rm for\ all}\ \ T\in\mathcal{T}_{h}.
%\end{equation*}
Let ${\bf V}_{h}^{0} :={\rm RT}_{0}(\mathcal{T}_{h})$ denote the
lowest order ${\rm RT}$ element space on $\mathcal{T}_{h}$.

We state two estimates on the quadrature error derived in \cite{Yotov06,Yotov10} as follows. If $K^{-1}\in
W_{\infty}^{1}(T)$ for all element $T\in\mathcal{T}_{h}$, then it
holds
\begin{equation}\label{equation12+2}
|\sigma(K^{-1}{\bf q}_{h},{\bf v}_{h})|\lesssim
\sum\limits_{T\in\mathcal{T}_{h}}h_{T}||{\bf q}_{h}||_{1,T}||{\bf
v}_{h}||_{T}
\end{equation}
for all ${\bf q}_{h}\in{\bf V}_{h}$, ${\bf v}_{h}\in{\bf
V}_{h}^{0}$. Moreover, if $K^{-1}\in W_{\infty}^{2}(T)$ for all
element $T\in\mathcal{T}_{h}$, then it holds
\begin{equation}\label{pressure7}
|\sigma(K^{-1}{\bf q}_{h},{\bf v}_{h})|\lesssim
\sum\limits_{T\in\mathcal{T}_{h}}h_{T}^{2}||{\bf
q}_{h}||_{1,T}||{\bf v}_{h}||_{1,T}
\end{equation}
for all ${\bf q}_{h},{\bf v}_{h}\in{\bf V}_{h}$.
%Note that the estimates
%(\ref{equation12+2}) and (\ref{pressure7}) can be found in
%.

Denote respectively by $\Pi$ and $\Pi_{0}$ the standard projection operators  from ${\bf H}({\rm div};\Omega)\cap
(L^{\varrho}(\Omega))^{d}$ onto $V_{h}$ and
$V_{h}^{0}$  for some $\varrho>2$  (cf.
\cite{Carstensen0,Yotov06}). It holds the following estimates:
\begin{equation}\label{pressureadd}
||h^{-1}({\bf q}-\Pi_{0}{\bf q})||\lesssim||{\bf
q}||_{H^{1}(\cup\mathcal{T}_{h})}\ \ \ {\rm for\ all}\ \ {\bf q}\in
(H^{1}(\cup\mathcal{T}_{h}))^{d}\cap{\bf H}({\rm div};\Omega),
\end{equation}
\begin{equation}\label{equation12+3}
||\Pi_{0}{\bf v}||_{1,T}\lesssim||{\bf v}||_{1,T},||\Pi{\bf
v}||_{1,T}\lesssim||{\bf v}||_{1,T}\ \ \ {\rm for \ all}\ \ {\bf
v}\in (H^{1}(T))^{d},\ \ \forall T\in\mathcal{T}_{h}.
\end{equation}
Note that bound (\ref{pressureadd}) can be found in
\cite{Carstensen0}, and bounds (\ref{equation12+3}) are the direct
results of Lemma 3.1 in \cite{Yotov06}.

To derive a  reliable {\it a posteriori } error estimate for the velocity error, we need to
%we need a series of auxiliary results derived by constructing
%several auxiliary problems and postprocessing approximation.
introduce  an auxiliary problem as following:
\begin{equation}\label{equation13}
 \left \{ \begin{array}{ll}
    \nabla\cdot(K\nabla\vartheta)=\nabla\cdot{\bf u}_{h} &\quad   \mbox{in}\ \ \Omega,\\
\vartheta=-g  &\quad \mbox{on}\ \ \Gamma_{D},\\
K\nabla\vartheta\cdot{\bf n}=0  &\quad \mbox{on}\ \ \Gamma_{N}.
\end{array}\right.
\end{equation}
Since $K$ is a symmetric and uniformly positive definite tensor, by the
Lax-Milgram theorem there exists a unique solution $\vartheta\in
H^{1}(\Omega)$ to this problem, provided that $g\in H^{1/2}(\Gamma_D)$. As
$K\nabla\vartheta-{\bf u}_{h}$ is divergence-free,   a
decomposition of two or three-dimensional vector fields  (see Theorem 3.4 and
Remark 3.10 in \cite{Girault})  implies that
there exists a stream function $\psi\in H^{1}(\Omega)^{k}$ such that
$$
K\nabla\vartheta-{\bf u}_{h}={\rm Curl}\ \psi.
$$
Since $K\nabla\vartheta\cdot{\bf n}$ and ${\bf u}_{h}\cdot{\bf n}$ vanish on
$\Gamma_{N}$, we easily know ${\rm Curl}\ \psi\cdot{\bf n}=0$ on
$\Gamma_{N}$.

Introduce $H_{D}^{1}(\Omega): =\{v\in H^{1}(\Omega): v=0\ \ {\rm on}\ \
\Gamma_{D}\}$, then $z: =-(p+\vartheta)\in H_{D}^{1}(\Omega)$
and it holds
\begin{equation}\label{decom}
{\bf u}-{\bf u}_{h}=-K\nabla p-K\nabla\vartheta+{\rm Curl}\
\psi=K\nabla z+{\rm Curl}\ \psi.
\end{equation}
This relation leads to
\begin{equation}\label{equation14}
\begin{array}{lll}
||K^{-1/2}({\bf u}-{\bf
u}_{h})||^{2}&=&\displaystyle\int_{\Omega}K^{-1}({\bf u}-{\bf
u}_{h})\cdot({\bf u}-{\bf u}_{h})\vspace{2mm}\\
&=&\displaystyle\int_{\Omega}(\nabla z+K^{-1}{\rm Curl}\
\psi)\cdot(K\nabla z+{\rm Curl}\ \psi)\vspace{2mm}\\
&=&\displaystyle\int_{\Omega}K\nabla z\cdot\nabla
z+2\int_{\Omega}\nabla z\cdot{\rm Curl}\
\psi+\int_{\Omega}K^{-1}{\rm Curl}\ \psi\cdot{\rm Curl}\ \psi.
\end{array}
\end{equation}
Using integration by parts and noticing ${\rm
Curl}\ \psi\cdot{\bf n}=0$ on $\Gamma_{N}$ and $z=0$ on
$\Gamma_{D}$, we have
\begin{equation}\label{equation15}
\displaystyle\int_{\Omega}\nabla z\cdot{\rm Curl}\
\psi=-\displaystyle\int_{\Omega}\nabla\cdot({\rm Curl}\
\psi)z+\int_{\Gamma_{D}\cup\Gamma_{N}}{\rm Curl}\ \psi\cdot{\bf
n}z=0.
\end{equation}
Notice that $K\nabla z=({\bf u}-{\bf u}_{h})-{\rm Curl}\ \psi$,
$({\bf u}-{\bf u}_{h})\cdot{\bf n}=0$ on $\Gamma_{N}$ and $z=0$
  on $\Gamma_{D}$. The relation (\ref{equation15}) and integration by
parts yield
\begin{equation}\label{equation16}
%\begin{array}{lll}
\displaystyle\int_{\Omega}K\nabla z\cdot\nabla
z=\displaystyle\int_{\Omega}({\bf u}-{\bf u}_{h})\cdot\nabla
z
=\displaystyle-\int_{\Omega}\nabla\cdot({\bf u}-{\bf u}_{h})z.
%\end{array}
\end{equation}
Let $Q_{h}z$ denote the $L^{2}-$projection of $z$ onto $W_{h}$. From
(\ref{equation3}) and (\ref{equation5}) it follows
\begin{equation}\label{equation17}
(\nabla\cdot({\bf u}-{\bf u}_{h}),Q_{h}z)=0.
\end{equation}
In view of $\nabla\cdot{\bf u}=f$, the above two relations, (\ref{equation16}) and (\ref{equation17}), imply
\begin{equation*}
\begin{array}{lll}
\displaystyle\int_{\Omega}K\nabla z\cdot\nabla
z&=&\displaystyle-\int_{\Omega}\nabla\cdot({\bf u}-{\bf
u}_{h})(z-Q_{h}z)\vspace{2mm}\\
&=&\displaystyle\sum\limits_{T\in\mathcal{T}_{h}}\int_{T}(-f+\nabla\cdot{\bf
u}_{h})(z-Q_{h}z)\vspace{2mm}\\
&\lesssim&\displaystyle\sum\limits_{T\in\mathcal{T}_{h}}h_{T}||f-\nabla\cdot{\bf
u}_{h}||_{T}||\nabla z||_{T}\vspace{2mm}\\
&\lesssim&||h(f-\nabla\cdot{\bf u}_{h})||\ ||K^{1/2}\nabla z||,
\end{array}
\end{equation*}
which results in 
\begin{equation}\label{equation18}
||K^{1/2}\nabla z||
\lesssim ||h(f-\nabla\cdot{\bf u}_{h})||.
\end{equation}

By (\ref{decom}) and 
(\ref{equation15}) we have
\begin{equation}\label{relation-id}
||K^{-1/2}({\bf u}-{\bf u}_{h})||^{2}=||K^{1/2}\nabla
z||^{2}+||K^{-1/2}{\rm Curl}\ \psi||^{2}.
\end{equation}
%which leads to
%\begin{equation}\label{equation19}
%||K^{1/2}\nabla z||\leq||K^{-1/2}({\bf u}-{\bf u}_{h})||\ \ \ {\rm
%and} \ \ \ ||K^{-1/2}{\rm Curl}\ \psi||\leq||K^{-1/2}({\bf u}-{\bf
%u}_{h})||.
%\end{equation}

Recalling $\displaystyle\int_{\Omega}{\rm Curl}\ \psi\cdot\nabla
v=0$ for all $v\in H_{D}^{1}(\Omega)$, in light of (\ref{decom}) we have, for any $\beta\in
H^{1}(\Omega)$,
\begin{equation*}
\begin{array}{lll}
& &\displaystyle\int_{\Omega}K^{-1}{\rm Curl}\ \psi\cdot{\rm Curl}\
\psi=\displaystyle\int_{\Omega}(K^{-1}({\bf u}-{\bf u}_{h})-\nabla
z)\cdot{\rm Curl}\ \psi\vspace{2mm}\\
%& &\ =\displaystyle\int_{\Omega}K^{-1}({\bf u}-{\bf u}_{h})\cdot{\rm Curl}\ \psi\vspace{2mm}\\
& &\ =\displaystyle\int_{\Omega}K^{-1}({\bf u}-{\bf u}_{h}-K\nabla v)\cdot{\rm Curl}\ \psi\vspace{2mm}\\
& &\ =\displaystyle\int_{\Omega}K^{-1}({\bf u}-K\nabla v-K\nabla
\beta)\cdot{\rm Curl}\ \psi+\int_{\Omega}K^{-1}(K\nabla\beta-{\bf
u}_{h})\cdot{\rm Curl}\ \psi\vspace{2mm}\\
& &\ \leq\displaystyle(||K^{-1}({\bf u}-K\nabla v-K\nabla
\beta)||+||\nabla\beta-K^{-1}{\bf u}_{h}||)||{\rm Curl}\ \psi||,
\end{array}
\end{equation*}
which implies
\begin{equation}\label{equation20}
||K^{-1/2}{\rm Curl}\ \psi||
\lesssim  \inf_{v\in H_{D}^{1}(\Omega)}||K^{-1}({\bf
u}-K\nabla v-K\nabla\beta)||+\inf_{\beta\in
H^{1}(\Omega)}||\nabla\beta-K^{-1}{\bf u}_{h}||.
\end{equation}

Finally, from 
(\ref{equation18})-(\ref{equation20}) it follows
%\begin{equation}\label{equation21}
%\begin{array}{lll}
%||K^{-1/2}({\bf u}-{\bf u}_{h})||^{2}&\lesssim&\displaystyle
%\{||h(f-\nabla\cdot{\bf u}_{h})||+\inf_{v\in
%H_{D}^{1}(\Omega)}||K^{-1}({\bf u}-K\nabla v-K\nabla
%\beta)||\vspace{2mm}\\
%& &\ \displaystyle+\inf_{\beta\in
%H^{1}(\Omega)}||\nabla\beta-K^{-1}{\bf u}_{h}||\}||K^{-1/2}({\bf
%u}-{\bf u}_{h})||.
%\end{array}
%\end{equation}
%Dividing by $||K^{-1/2}({\bf u}-{\bf u}_{h})||$, we obtain from
%(\ref{equation21})
\begin{equation}\label{equation22}
\begin{array}{lll}
||K^{-1/2}({\bf u}-{\bf u}_{h})||&\lesssim&\left\{\inf_{v\in H_{D}^{1}(\Omega)}||K^{-1}({\bf u}-K\nabla
v-K\nabla\beta)||\right.\vspace{2mm}\\
& &\left. \displaystyle+\inf_{\beta\in
H^{1}(\Omega)}||\nabla\beta-K^{-1}{\bf
u}_{h}||+||h(f-\nabla\cdot{\bf u}_{h})||\right\}.
\end{array}
\end{equation}

In what follows, we shall follow the routines of \cite{Carstensen;Bartels} to estimate the first and second terms on the
right-hand side of (\ref{equation22}). To this end, we assume that
$g\in H^{1}(\Gamma_{D})\cap C(\Gamma_{D})$ and $g|_{E}\in H^{2}(E)$
for all $E\in\varepsilon_{h}\cap\Gamma_{D}$ and denote by $g_{h,D}$
the nodal $\varepsilon_{D}-$piecewise linear interpolation of $g$
on $\Gamma_{D}$ which satisfies $g_{h,D}({\bf z})=g({\bf z})$ for all
${\bf z}\in\mathcal{N}\cap\Gamma_{D}$. Let $\{\varphi_{\bf z}: {\bf z}\in\mathcal{N}\}$ be
 the nodal basis of the lowest order finite element space associated to $\mathcal{T}_{h}$,
 i.e., $\varphi_{\bf z}\in C(\bar{\Omega}), \varphi_{\bf z}|_{T}\in P_{1}(T)$ for all $T\in\mathcal{T}_{h}$,
 $\varphi_{\bf z}({\bf x})=0$ for ${\bf x}\in\mathcal{N}/\{{\bf z}\}$, and $\varphi_{\bf z}({\bf z})=1$.
 Denote by  $\omega_{\bf z} :={\rm int}({\rm supp}\varphi_{\bf z})$. We then introduce a subspace of
$H^{1}(\Omega)$, $\tilde{S}$, as follows (see
\cite{Carstensen;Bartels}):
$$
\tilde{S} :==\left\{ \begin{array}{c} \displaystyle\sum\limits_{{\bf z}\in\mathcal{N}}\varphi_{\bf z}v_{\bf z}: \forall\ {\bf z}
\in\mathcal{N},v_{\bf z}\in C(\omega_{\bf z}),\ v_{\bf z}|_{\omega_{\bf z}}\ {\rm is\ a\ piecewise} \\ {\rm polynomial,\ and}\ v_{\bf z}=-g_{h,D}\ {\rm on}\ \Gamma_{D}\cap\omega_{\bf z}.
\end{array}\right\}
$$

\begin{lemma}\label{equation23}
For $\beta\in\tilde{S}$, it holds
\begin{equation}\label{equation24}
\inf_{v\in H_{D}^{1}(\Omega)}||K^{-1}({\bf u}-K\nabla v-K\nabla
\beta)||\lesssim\{\sum\limits_{E\subset\Gamma_{D}}h_{E}^{3}||\partial^{2}g/\partial
s^{2}||_{E}^{2}\}^{1/2}.
\end{equation}
\end{lemma}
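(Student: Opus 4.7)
The plan is to first exploit the PDE identity ${\bf u}=-K\nabla p$ from (\ref{equation1}) to rewrite the argument of the norm as a pure gradient:
\begin{equation*}
K^{-1}({\bf u}-K\nabla v-K\nabla\beta)=-\nabla(p+v+\beta).
\end{equation*}
Thus the lemma reduces to proving
\begin{equation*}
\inf_{v\in H_{D}^{1}(\Omega)}\|\nabla(p+v+\beta)\|\lesssim \Bigl\{\sum_{E\subset\Gamma_{D}}h_{E}^{3}\|\partial^{2}g/\partial s^{2}\|_{E}^{2}\Bigr\}^{1/2}.
\end{equation*}
This infimum equals the minimum of $\|\nabla w\|$ over all liftings $w\in H^{1}(\Omega)$ whose trace on $\Gamma_{D}$ coincides with $(p+\beta)|_{\Gamma_{D}}$. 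Using $p|_{\Gamma_{D}}=g$ and, from the definition of $\tilde{S}$, $\beta|_{\Gamma_{D}}=-g_{h,D}$, this trace equals $g-g_{h,D}$, and it \emph{vanishes at every node in} $\mathcal{N}\cap\Gamma_{D}$ because $g_{h,D}$ is the nodal interpolant of $g$ along $\Gamma_{D}$.

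Next I would construct an explicit lifting $w\in H^{1}(\Omega)$ supported in the single layer of boundary elements adjacent to $\Gamma_{D}$. For each side $E\subset\Gamma_{D}$ let $T_{E}\in\mathcal{T}_{h}$ be the unique element with $E\subset\partial T_{E}$; set $w\equiv 0$ outside $\bigcup_{E\subset\Gamma_{D}}T_{E}$ and choose $w|_{T_{E}}$ to be a local extension of $g-g_{h,D}|_{E}$ that vanishes on $\partial T_{E}\setminus E$. Since $g-g_{h,D}$ vanishes at the two endpoints of $E$, the prescribed boundary data on $\partial T_{E}$ are continuous, so such a lifting exists in $H^{1}(T_{E})$ and $w\in H^{1}(\Omega)$. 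Taking $v^{\ast}:=-(p+\beta)+w$, one checks $v^{\ast}\in H_{D}^{1}(\Omega)$ and $p+v^{\ast}+\beta=w$; a scaling to the reference element then yields the sharp local bound
\begin{equation*}
\|\nabla w\|_{T_{E}}^{2}\lesssim h_{E}^{-1}\|g-g_{h,D}\|_{E}^{2}+h_{E}\|(g-g_{h,D})'\|_{E}^{2}.
\end{equation*}

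Thirdly, I would invoke the standard one-dimensional interpolation error estimates for the piecewise linear nodal interpolant along each $E\subset\Gamma_{D}$,
\begin{equation*}
\|g-g_{h,D}\|_{E}\lesssim h_{E}^{2}\|\partial^{2}g/\partial s^{2}\|_{E},\qquad \|(g-g_{h,D})'\|_{E}\lesssim h_{E}\|\partial^{2}g/\partial s^{2}\|_{E},
\end{equation*}
and insert them into the local lifting estimate to obtain $\|\nabla w\|_{T_{E}}^{2}\lesssim h_{E}^{3}\|\partial^{2}g/\partial s^{2}\|_{E}^{2}$. Summation over all $E\subset\Gamma_{D}$ and taking the square root gives (\ref{equation24}).

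The main obstacle I expect is the local lifting step: producing $w|_{T_{E}}$ with the prescribed boundary trace and the sharp scaling $h_{E}^{-1}\|\cdot\|_{E}^{2}+h_{E}\|\cdot'\|_{E}^{2}$. The construction relies essentially on the fact that $g-g_{h,D}$ vanishes at the nodal endpoints of $E$, so that the extension by zero to the other faces of $T_{E}$ is continuous and the resulting $w$ belongs to $H^{1}(\Omega)$ globally; additional care is required at corner vertices where $\Gamma_{D}$ meets $\Gamma_{N}$ and at vertices shared by two consecutive Dirichlet sides, where the boundary data inherited from adjacent edges must match up so that the elementwise contributions glue into a single $H^{1}$ function.
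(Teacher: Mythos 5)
Your proposal follows essentially the same route as the paper: the paper's proof consists precisely of the reduction $K^{-1}({\bf u}-K\nabla v-K\nabla\beta)=-\nabla(p+v+\beta)$, the identification of the infimum with $\inf\{\|\nabla w\|: w\in H^{1}(\Omega),\ w|_{\Gamma_{D}}=g-g_{h,D}\}$, and then a citation of the lifting-plus-interpolation estimate from the proof of Lemma 3.4 in Carstensen--Bartels--Jansche. Your explicit boundary-layer lifting with the scaled trace bound $h_{E}^{-1}\|g-g_{h,D}\|_{E}^{2}+h_{E}\|(g-g_{h,D})'\|_{E}^{2}$ and the one-dimensional nodal interpolation estimates is exactly the content of that cited estimate, so your argument is a correct, self-contained version of the paper's proof.
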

\begin{proof} The definition of $\tilde{S}$ shows $\beta=-g_{h,D}$
on $\Gamma_{D}$. Noticing
$K^{-1}{\bf u}=-\nabla p$, we have
\begin{equation*}
\inf_{v\in H_{D}^{1}(\Omega)}||K^{-1}({\bf u}-K\nabla v-K\nabla
\beta)||=\inf_{w\in H^{1}(\Omega),w|_{\Gamma_{D}}=g-g_{h,D}}||\nabla
w||.
\end{equation*}
The desired result (\ref{equation24}) immediately follows from an
estimate in the proof of Lemma 3.4 in \cite{Carstensen;Bartels}.
\end{proof}

On the other hand, it holds 
\begin{equation}\label{equation25}
\inf_{\beta\in H^{1}(\Omega)}||\nabla\beta-K^{-1}{\bf
u}_{h}||\leq\inf_{v_{h}\in\tilde{S}}||\nabla v_{h}-K^{-1}{\bf
u}_{h}||.
\end{equation}
It is sophisticated to give a computational upper bound for the
right-hand side term of (\ref{equation25}) with the help of ${\bf
u}_{h}$ and given data. To this end, let $\overline{K^{-1}}$ denote
the piecewise mean value of $K^{-1}$ on $\mathcal{T}_{h}$, i.e. $\overline{K^{-1}}|_T=\frac{1}{|T|}\int_{T} K^{-1}({\bf x})  d{\bf x}$ for all $T\in\mathcal{T}_{h}.$  Then 
%verify
%\begin{equation*}
%\xi^{\rm
%T}\overline{K^{-1}}|_{T}\xi=\displaystyle\frac{1}{|T|}\int_{T}\xi^{\rm
%T}K^{-1}({\bf x})\xi d{\bf x},\ \ \ {\rm for\ all\ vectors}\ \
%\xi\in\mathbb{R}^{d}, T\in\mathcal{T}_{h}.
%\end{equation*}
$\overline{K^{-1}}$ is symmetric and has the following $V-$ellipticity:
\begin{equation*}
k_{1}^{-1}\xi^{\rm T}\xi\leq\xi^{\rm T}\overline{K^{-1}}\xi\leq
k_{0}^{-1}\xi^{\rm T}\xi\ \ \ {\rm for\ all}\ {\bf x}\in\Omega,\ \
\xi\in\mathbb{R}^{d}.
\end{equation*}

Recall that ${\bf V}_{h}^{0}$ is the lowest order ${\rm RT}$ element
space on $\mathcal{T}_{h}$.
 and $W_{h}$ is the piecewise constant space.% $W_{h}^{0} :=W_{h}$. 
Introduce the following auxiliary problem: Find $(\tilde{\bf
u}_{h},\tilde{p}_{h})\in{\bf V}_{h}^{0}\times W_{h}$ such that
\begin{equation}\label{equation26}
(\overline{K^{-1}}\tilde{\bf u}_{h},{\bf
v}_{h})=(\tilde{p}_{h},\nabla\cdot{\bf v}_{h})-<g,{\bf
v}_{h}\cdot{\bf n}>_{\Gamma_{D}},\ \ \forall\ {\bf v}_{h}\in{\bf
V}_{h}^{0},
\end{equation}
\begin{equation}\label{equation27}
(\nabla\cdot\tilde{\bf u}_{h},w_{h})=(f,w_{h}),\ \ \ \forall\
w_{h}\in W_{h}.
\end{equation}
It is well-known that this problem admits a unique solution (see \cite{Brezzi}).
\begin{lemma}\label{equation27+1}
Let $(\tilde{\bf u}_{h},\tilde{p}_{h})\in{\bf V}_{h}^{0}\times
W_{h}$ be the solution of the auxiliary problem (\ref{equation26})-(\ref{equation27}), and $({\bf u}_{h},p_{h})\in{\bf V}_{h}\times
W_{h}$ be the solution of the MFMFEM scheme (\ref{equation4})-(\ref{equation5}).
Assume $K^{-1}\in W_{\infty}^{1}(\mathcal{T}_{h})$. Then it holds
\begin{equation}\label{equation27+2}
||\overline{K^{-1}}^{1/2}(\tilde{\bf u}_{h}-\Pi_{0}{\bf
u}_{h})||\lesssim\{\sum\limits_{T\in\mathcal{T}_{h}}h_{T}^{2}||{\bf
u}_{h}||_{1,T}^{2}\}^{1/2},
\end{equation}
where $\Pi_{0}$ is the standard projection operator from ${\bf H}({\rm div};\Omega)$ onto ${\bf V}_{h}^{0}$.
\end{lemma}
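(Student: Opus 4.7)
The plan is to test the two schemes against the same function in $\mathbf{V}_h^0\subset\mathbf{V}_h$ and exploit that $\tilde{\mathbf u}_h-\Pi_0\mathbf u_h$ is divergence-free, so the pressure variables drop out, leaving only a term that can be bounded by a mean-value perturbation, a quadrature error, and a projection error.

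First I would observe that since $\nabla\cdot\mathbf u_h=Q_hf$ is piecewise constant by (\ref{equation5}) and the commuting diagram property of $\Pi_0$, one has $\nabla\cdot\Pi_0\mathbf u_h=Q_hf=\nabla\cdot\tilde{\mathbf u}_h$, hence $\nabla\cdot(\tilde{\mathbf u}_h-\Pi_0\mathbf u_h)=0$. Subtracting (\ref{equation4}) (taken with test function $\mathbf v_h\in\mathbf V_h^0\subset\mathbf V_h$) from (\ref{equation26}) and setting $\mathbf v_h=\tilde{\mathbf u}_h-\Pi_0\mathbf u_h$ then eliminates $\tilde p_h-p_h$, giving
\begin{equation*}
(\overline{K^{-1}}\tilde{\mathbf u}_h,\mathbf v_h)=(K^{-1}\mathbf u_h,\mathbf v_h)_Q.
\end{equation*}
Subtracting $(\overline{K^{-1}}\Pi_0\mathbf u_h,\mathbf v_h)$ from both sides yields
\begin{equation*}
(\overline{K^{-1}}(\tilde{\mathbf u}_h-\Pi_0\mathbf u_h),\mathbf v_h)=\underbrace{[(K^{-1}\mathbf u_h,\mathbf v_h)_Q-(K^{-1}\mathbf u_h,\mathbf v_h)]}_{\mathrm{I}}+\underbrace{((K^{-1}-\overline{K^{-1}})\mathbf u_h,\mathbf v_h)}_{\mathrm{II}}+\underbrace{(\overline{K^{-1}}(\mathbf u_h-\Pi_0\mathbf u_h),\mathbf v_h)}_{\mathrm{III}}.
\end{equation*}

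Next I would bound the three pieces. Term $\mathrm{I}$ is $-\sigma(K^{-1}\mathbf u_h,\mathbf v_h)$, which by the quadrature estimate (\ref{equation12+2}) (applicable since $\mathbf v_h\in\mathbf V_h^0$ and $K^{-1}\in W_\infty^1(\mathcal T_h)$) is controlled by $\sum_T h_T\|\mathbf u_h\|_{1,T}\|\mathbf v_h\|_T$. Term $\mathrm{II}$ uses the piecewise Bramble–Hilbert bound $\|K^{-1}-\overline{K^{-1}}\|_{L^\infty(T)}\lesssim h_T|K^{-1}|_{W_\infty^1(T)}$, giving the same type of sum. Term $\mathrm{III}$ is handled with (\ref{pressureadd}) applied to $\mathbf u_h\in(H^1(\cup\mathcal T_h))^d\cap\mathbf H(\mathrm{div};\Omega)$, which yields $\|\mathbf u_h-\Pi_0\mathbf u_h\|\lesssim(\sum_Th_T^2\|\mathbf u_h\|_{1,T}^2)^{1/2}$, combined with the uniform $L^\infty$ bound on $\overline{K^{-1}}$. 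Collecting, by Cauchy–Schwarz,
\begin{equation*}
(\overline{K^{-1}}(\tilde{\mathbf u}_h-\Pi_0\mathbf u_h),\mathbf v_h)\lesssim\Bigl\{\sum_{T\in\mathcal T_h}h_T^2\|\mathbf u_h\|_{1,T}^2\Bigr\}^{1/2}\|\mathbf v_h\|.
\end{equation*}

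Finally I would invoke the uniform ellipticity of $\overline{K^{-1}}$ to replace $\|\mathbf v_h\|$ on the right by $\|\overline{K^{-1}}^{1/2}\mathbf v_h\|$ (up to a constant), and divide through, producing exactly (\ref{equation27+2}). The main obstacle is term $\mathrm{III}$: one must verify that the local projection-error estimate for $\mathbf u_h-\Pi_0\mathbf u_h$ in $L^2$ really gives $h_T\|\mathbf u_h\|_{1,T}$ rather than a patch-norm expression, but this follows from the explicit construction of $\Pi_0$ on $\mathrm{BDM}_1$ plus the trace/scaling argument underlying (\ref{pressureadd}). A secondary care point is ensuring $\mathbf V_h^0\subset\mathbf V_h$ so that (\ref{equation4}) may legitimately be tested against $\mathbf v_h$, and that the boundary term $\langle g,\mathbf v_h\cdot\mathbf n\rangle_{\Gamma_D}$ cancels exactly between the two schemes—both are immediate from the definitions.
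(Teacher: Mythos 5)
Your proposal is correct and follows essentially the same route as the paper: testing both schemes with $\mathbf v_h=\tilde{\mathbf u}_h-\Pi_0\mathbf u_h\in\mathbf V_h^0\subset\mathbf V_h$, using the commuting property of $\Pi_0$ together with (\ref{equation5}) and (\ref{equation27}) to kill the pressure terms, and then splitting the residual into exactly the same three pieces (quadrature error bounded by (\ref{equation12+2}), mean-value perturbation of $K^{-1}$ bounded via $W^1_\infty$ regularity, and projection error bounded by (\ref{pressureadd})), followed by the ellipticity of $\overline{K^{-1}}$. No gaps.
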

\begin{proof}
Notice that ${\bf V}_{h}^{0}\subset{\bf V}_{h}$. From
(\ref{equation4}) we get
\begin{equation}\label{equation28}
\begin{array}{lll}
(\overline{K^{-1}}\Pi_{0}{\bf u}_{h},{\bf v}_{h})
&=&(p_{h},\nabla\cdot{\bf v}_{h})-<g,{\bf v}_{h}\cdot{\bf
n}>_{\Gamma_{D}}\vspace{2mm}\\
& &\ +(\overline{K^{-1}}\Pi_{0}{\bf u}_{h},{\bf v}_{h})-(K^{-1}{\bf
u}_{h},{\bf v}_{h})_{Q},\ \forall\ {\bf v}_{h}\in{\bf V}_{h}^{0}.
\end{array}
\end{equation}
Using the commuting property of $\Pi_{0}$ and
(\ref{equation5}), we have 
\begin{equation}\label{equation29}
%\begin{array}{lll}
(\nabla\cdot\Pi_{0}{\bf u}_{h},w_{h})=(Q_{h}\nabla\cdot{\bf
u}_{h},w_{h})%\vspace{2mm}\\
=(\nabla\cdot{\bf u}_{h},w_{h})=(f,w_{h}),\ \ \ \forall
w_{h}\in W_{h}.
%\end{array}
\end{equation}
A combination of (\ref{equation27})
and (\ref{equation29}) yields
\begin{equation}\label{equation29-1}
(\nabla\cdot(\tilde{\bf u}_{h}-\Pi_{0}{\bf u}_{h}),w_{h})=0,\ \ \ \forall
w_{h}\in W_{h}.
\end{equation}
Taking  ${\bf v}_{h}=\tilde{\bf u}_{h}-\Pi_{0}{\bf u}_{h} \in{\bf V}_{h}^{0}$,  subtracting (\ref{equation28}) from (\ref{equation26}) and using (\ref{equation29-1}), we have
\begin{equation}\label{equation30}
\begin{array}{lll}
& &\displaystyle||\overline{K^{-1}}^{1/2}(\tilde{\bf
u}_{h}-\Pi_{0}{\bf u}_{h})||^{2}=(\overline{K^{-1}}(\tilde{\bf
u}_{h}-\Pi_{0}{\bf
u}_{h}),\tilde{\bf u}_{h}-\Pi_{0}{\bf u}_{h})\vspace{2mm}\\
& &\ =\displaystyle(\tilde{p}_{h}-p_{h},\nabla\cdot(\tilde{\bf
u}_{h}-\Pi_{0}{\bf u}_{h}))+(K^{-1}{\bf u}_{h},{\bf
v}_{h})_{Q}-(\overline{K^{-1}}\Pi_{0}{\bf u}_{h},{\bf v}_{h})\vspace{2mm}\\
& &\ =\displaystyle(K^{-1}{\bf u}_{h},{\bf v}_{h})_{Q}-(K^{-1}{\bf
u}_{h},{\bf v}_{h})+(K^{-1}{\bf u}_{h},{\bf v}_{h})
-(\overline{K^{-1}}\Pi_{0}{\bf u}_{h},{\bf v}_{h})\vspace{2mm}\\
& &\ =\displaystyle-\sigma(K^{-1}{\bf u}_{h},{\bf v}_{h})+((K^{-1}-\overline{K^{-1}}){\bf
u}_{h},{\bf v}_{h})+(\overline{K^{-1}}({\bf u}_{h}-\Pi_{0}{\bf
u}_{h}),{\bf v}_{h}).
\end{array}
\end{equation}
The work left is to estimate the three terms in the last line of (\ref{equation30}).
Notice that the inequality (\ref{equation12+2}) implies
\begin{equation}\label{equation31}
\begin{array}{lll}
& &|-\sigma(K^{-1}{\bf u}_{h},\tilde{\bf u}_{h}-\Pi_{0}{\bf u}_{h})|
\lesssim\displaystyle\sum\limits_{T\in\mathcal{T}_{h}}h_{T}||{\bf
u}_{h}||_{1,T}||\tilde{\bf u}_{h}-\Pi_{0}{\bf
u}_{h}||_{T}\vspace{2mm}\\
& &\displaystyle\lesssim
\{\sum\limits_{T\in\mathcal{T}_{h}}h_{T}^{2}||{\bf
u}_{h}||_{1,T}^{2}\}^{1/2}||{\overline{K^{-1}}}^{1/2}(\tilde{\bf
u}_{h}-\Pi_{0}{\bf u}_{h})||.
\end{array}
\end{equation}
Due to $K^{-1}\in W_{\infty}^{1}(\mathcal{T}_{h})$, it holds
\begin{equation}\label{equation32}
((K^{-1}-\overline{K^{-1}}){\bf u}_{h},\tilde{\bf u}_{h}-\Pi_{0}{\bf
u}_{h})\lesssim||h{\bf
u}_{h}||\ ||{\overline{K^{-1}}}^{1/2}(\tilde{\bf u}_{h}-\Pi_{0}{\bf
u}_{h})||.
\end{equation}
In view of  the approximation property (\ref{pressureadd}) of  $\Pi_{0}$, we have
\begin{equation}\label{equation33}
\begin{array}{lll}
& &(\overline{K^{-1}}({\bf u}_{h}-\Pi_{0}{\bf u}_{h}),\tilde{\bf
u}_{h}-\Pi_{0}{\bf u}_{h})
\displaystyle
%\sum\limits_{T\in\mathcal{T}_{h}}||\overline{K^{-1}}({\bf
%u}_{h}-\Pi_{0}{\bf u}_{h})||_{T}||\tilde{\bf u}_{h}-\Pi_{0}{\bf
%u}_{h}||_{T}\vspace{2mm}\\
%& &\hspace{1.5cm} \ \ \ \
%\displaystyle\lesssim\sum\limits_{T\in\mathcal{T}_{h}} h_{T}||{\bf
%u}_{h}||_{1,T}||\tilde{\bf u}_{h}-\Pi_{0}{\bf
%u}_{h}||_{T}\vspace{2mm}\\
%& &\hspace{1.5cm} \ \ \ \
\displaystyle\lesssim(\sum\limits_{T\in\mathcal{T}_{h}}h_{T}^{2}||{\bf
u}_{h}||_{1,T}^{2})^{1/2}||{\overline{K^{-1}}}^{1/2}(\tilde{\bf
u}_{h}-\Pi_{0}{\bf u}_{h})||.
\end{array}
\end{equation}
Combining (\ref{equation30})-(\ref{equation33}) leads to the desired estimate (\ref{equation27+2}).
\end{proof}

We now follow the idea of \cite{Vohralik1} to construct a postprocessed scalar pressure $l_{h}$ which links
$\tilde{\bf u}_{h}$ and $\tilde{p}_{h}$ on each simplicial element
in the following way:
\begin{equation}\label{equation34}
-\overline{K^{-1}}^{-1}\nabla l_{h}=\tilde{\bf u}_{h}\ \ \ {\rm in}\
\ \ T,\ \ \ {\rm for\ all}\ \ \ T\in\mathcal{T}_{h},
\end{equation}
\begin{equation}\label{equation35}
\displaystyle\frac{1}{|T|}\int_{T}l_{h}d{\bf x}=\tilde{p}_{h}|_{T},\
\ \ {\rm for\ all}\ \ \ T\in\mathcal{T}_{h}.
\end{equation}
We refer to   \cite{Vohralik1} for the existence of the
postprocessed solution $l_{h}$.

As shown in \cite{Vohralik1},   the new quantity $l_{h}$ has the continuity of the mean values of
traces across interior sides (or faces), and its mean of trace on any boundary side (or face) equals to that of
$g$. In fact, for an interior side  (or face)
$E$ shared by $T_{+}$ and $T_{-}$, let ${\bf v}_{E}$ denote the side (or face)
basis function on $E$ with respect to ${\bf V}_{h}^{0}$ with the
support set $\omega_{E}$. From  (\ref{equation26}),
 (\ref{equation34})-(\ref{equation35}) and 
  integration by parts we have
\begin{equation*}
\begin{array}{lll}
0&=&\displaystyle(-\nabla_{h}l_{h},{\bf v}_{E})_{T_{+}\cup
T_{-}}-(\tilde{p}_{h},\nabla\cdot{\bf v}_{E})_{T_{+}\cup
T_{-}}+<g,{\bf
v}_{E}\cdot{\bf n}>_{\partial\omega_{E}\cap\Gamma_{D}}\vspace{2mm}\\
&=&\displaystyle\int_{T_{+}}\nabla\cdot{\bf
v}_{E}(l_{h}-\tilde{p}_{h})+\int_{T_{-}}\nabla\cdot{\bf v}_{E}(l_{h}-\tilde{p}_{h})+\int_{E}{\bf
v}_{E}\cdot{\bf n}_{E}(l_{h}|_{T_{+}}-l_{h}|_{T_{-}})\vspace{2mm}\\
%& &\ \displaystyle-\int_{T_{+}}\tilde{p}_{h}\nabla\cdot{\bf
%v}_{E}-\int_{T_{-}}\tilde{p}_{h}\nabla\cdot{\bf v}_{E}\vspace{2mm}\\
&=&\displaystyle<1,l_{h}|_{T_{+}}-l_{h}|_{T_{-}}>_{E},
\end{array}
\end{equation*}
which implies the continuity of the means of traces of $l_{h}$
across the interior side. For a boundary side $E\subset\Gamma_{D}$,
let $E\subset\partial T$. Similarly, from (\ref{equation26}) and
(\ref{equation34})-(\ref{equation35}) we have
\begin{eqnarray*}
0&=&-(\nabla l_{h},{\bf v}_{E})_{T}-(\tilde{p}_{h},\nabla\cdot{\bf
v}_{E})_{T}+<g,{\bf v}_{E}\cdot{\bf n}>_{\partial T\cap\Gamma_{D}}\\
&=&<1,g-l_{h}>_{E}.
\end{eqnarray*}

For $K^{-1}\in W_{\infty}^{1}(\mathcal{T}_{h}),$ from  the  triangle inequality,  the postprocessing (\ref{equation34}),  an interpolation estimate, an inverse inequality,    Lemma
\ref{equation27+1} and the definition (\ref{quadrature-error}) of the quadrature indicator
$\eta_{Q}$  it follows
\begin{equation}\label{equation36}
\begin{array}{lll}
\inf\limits_{v_{h}\in\tilde{S}} ||\nabla v_{h}-K^{-1}{\bf
u}_{h}||&\leq&\inf\limits_{v_{h}\in\tilde{S}}\left\{\displaystyle||\nabla
v_{h}-\overline{K^{-1}}\tilde{\bf
u}_{h}||+||\overline{K^{-1}}\tilde{\bf
u}_{h}-\overline{K^{-1}}\Pi_{0}{\bf u}_{h}||\right.\vspace{2mm}\\
& &\ \ \left.\displaystyle+||\overline{K^{-1}}\Pi_{0}{\bf
u}_{h}-\overline{K^{-1}}{\bf u}_{h}||+||\overline{K^{-1}}{\bf
u}_{h}-K^{-1}{\bf u}_{h}||\right\}\vspace{2mm}\\
&\lesssim&\inf\limits_{v_{h}\in\tilde{S}}\displaystyle\left\{||\nabla_{h}(v_{h}+l_{h})||+||\overline{K^{-1}}^{1/2}(\tilde{\bf
u}_{h}-\Pi_{0}{\bf u}_{h})||\right.\vspace{2mm}\\
& &\ \ \left.
\displaystyle+(\sum\limits_{T\in\mathcal{T}_{h}}h_{T}^{2}||{\bf
u}_{h}||_{1,T}^{2})^{1/2}+||h{\bf u}_{h}||\right\}\vspace{2mm}\\
&\lesssim&
\inf\limits_{v_{h}\in\tilde{S}}||h^{-1}(v_{h}+l_{h})||+\eta_{Q}.
\end{array}
\end{equation}
%Using the inverse estimate, we have
%\begin{equation}\label{equation37}
%||\nabla_{h}(v_{h}+l_{h})||\lesssim||h^{-1}(v_{h}+l_{h})||.
%\end{equation}
%
%From (\ref{equation36})-(\ref{equation37}), altogether, we arrive at
%\begin{equation*}
%||\nabla v_{h}-K^{-1}{\bf u}_{h}||\lesssim
%||h^{-1}(v_{h}+l_{h})||+\eta_{Q}.
%\end{equation*}
%The above inequality leads to
%\begin{equation}\label{equation40}
%\inf_{v_{h}\in\tilde{S}}||\nabla v_{h}-K^{-1}{\bf u}_{h}||\lesssim
%\inf_{v_{h}\in\tilde{S}}||h^{-1}(v_{h}+l_{h})||+\eta_{Q}.
%\end{equation}

 Following the idea of the
proof of Lemma 3.4 in \cite{Carstensen;Bartels}, we easily obtain the following conclusion.
\begin{lemma}\label{equation41}
Let $l_{h}$ be the postprocessed scalar pressure determined by
(\ref{equation34})-(\ref{equation35}), and $g_{h,D}$ be the
nodal $\varepsilon_{D}-$piecewise linear interpolation of $g$ on
$\Gamma_{D}$. For a side (or face) $E\in\varepsilon_{h}$, denote
\begin{equation*}
\tilde{J}_{{\bf t}_{E}} :=\left \{ \begin{array}{ll}
h_{E}^{1/2}||[l_{h}]||_{E}, &   \mbox{if}\ \ E\in\varepsilon_{h}^{0},\\
h_{E}^{1/2}||l_{h}-g_{h,D}||_{E}, &   \mbox{if}\ \
E\in\varepsilon_{D}.
 \end{array}\right.
\end{equation*}
Then it holds
\begin{equation}\label{equation42}
\inf_{v_{h}\in\tilde{S}}||h^{-1}(v_{h}+l_{h})||^{2}\lesssim
\displaystyle\sum\limits_{E\in\varepsilon_{h}^{0}\cup\varepsilon_{D}}h_{E}^{-2}\tilde{J}_{{\bf
t}_{E}}^{2}.
\end{equation}
\end{lemma}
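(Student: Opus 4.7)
The plan is to construct an explicit test function $v_h^\ast = \sum_{\mathbf{z} \in \mathcal{N}} \varphi_{\mathbf{z}} v_{\mathbf{z}}^\ast \in \tilde{S}$ by prescribing $v_{\mathbf{z}}^\ast$ at each node $\mathbf{z}$, and then to bound $\|h^{-1}(v_h^\ast + l_h)\|$ locally. Using the partition-of-unity property $\sum_{\mathbf{z}} \varphi_{\mathbf{z}} \equiv 1$, one has on each $T \in \mathcal{T}_h$
\begin{equation*}
(v_h^\ast + l_h)|_T = \sum_{\mathbf{z} \in \mathcal{N} \cap T} \varphi_{\mathbf{z}}|_T \bigl(v_{\mathbf{z}}^\ast|_T + l_h|_T\bigr),
\end{equation*}
so it suffices to arrange $v_{\mathbf{z}}^\ast|_T \approx -l_h|_T$ for every $T \ni \mathbf{z}$, subject to the constraints $v_{\mathbf{z}}^\ast \in C(\omega_{\mathbf{z}})$ and, when $\mathbf{z} \in \mathcal{N} \cap \Gamma_D$, $v_{\mathbf{z}}^\ast = -g_{h,D}$ on $\Gamma_D \cap \omega_{\mathbf{z}}$.

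For each node $\mathbf{z}$, I would fix a reference element $T_{\mathbf{z}} \subset \omega_{\mathbf{z}}$ and build $v_{\mathbf{z}}^\ast$ elementwise across the patch: set $v_{\mathbf{z}}^\ast|_{T_{\mathbf{z}}} := -l_h|_{T_{\mathbf{z}}}$, and when passing to a neighboring patch element across an interior face $E$, add a polynomial correction whose trace on $E$ matches the jump $[l_h]|_E$, thereby enforcing continuity of $v_{\mathbf{z}}^\ast$. For a Dirichlet node, choose $T_{\mathbf{z}}$ so that some face $E \subset \partial T_{\mathbf{z}} \cap \Gamma_D$ and build in an additional polynomial correction with trace $l_h - g_{h,D}$ on $E$ to guarantee the boundary matching. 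Shape-regularity of $\mathcal{T}_h$ bounds the number of elements in each patch $\omega_{\mathbf{z}}$ uniformly, so the propagation from any $T \subset \omega_{\mathbf{z}}$ back to $T_{\mathbf{z}}$ telescopes through only $O(1)$ face corrections.

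With this construction, $v_{\mathbf{z}}^\ast|_T + l_h|_T$ becomes a finite sum of polynomial face-extensions whose traces are $[l_h]|_E$ on interior faces or $l_h - g_{h,D}$ on Dirichlet faces. A standard scaling argument combined with the trace inequality shows that each such polynomial extension $\phi_E$ of bounded degree satisfies $\|\phi_E\|_T^2 \lesssim h_E \|\phi_E\|_E^2$, yielding
\begin{equation*}
\|v_{\mathbf{z}}^\ast|_T + l_h|_T\|_T^2 \lesssim \sum_{E} h_E \bigl\|[l_h]\bigr\|_E^2,
\end{equation*}
where the sum runs over the faces traversed between $T$ and $T_{\mathbf{z}}$, with $l_h - g_{h,D}$ replacing $[l_h]$ on Dirichlet faces. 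Multiplying by $h_T^{-2}$, summing over $\mathbf{z} \in \mathcal{N} \cap T$ and $T \in \mathcal{T}_h$, and recalling the definition of $\tilde{J}_{\mathbf{t}_E}$ together with the local quasi-uniformity of $h_T$ and $h_E$ on a patch, I arrive at $\|h^{-1}(v_h^\ast + l_h)\|^2 \lesssim \sum_{E \in \varepsilon_h^0 \cup \varepsilon_D} h_E^{-2} \tilde{J}_{\mathbf{t}_E}^2$, which is \eqref{equation42}.

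The main obstacle is reconciling the three demands on $v_{\mathbf{z}}^\ast$ — continuity over the whole patch $\omega_{\mathbf{z}}$, Dirichlet matching on boundary faces, and closeness to $-l_h|_T$ on every $T \subset \omega_{\mathbf{z}}$ — while ensuring the resulting $L^2$-discrepancy is controlled by a uniformly bounded number of face-jump terms. This is exactly the configuration handled in the proof of Lemma~3.4 of \cite{Carstensen;Bartels}, whose patchwise averaging and polynomial-extension template I would follow verbatim.
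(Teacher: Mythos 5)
Your overall strategy --- partition of unity, a patchwise choice of $v_{\mathbf{z}}$ close to $-l_h$, and reduction of the discrepancy to face jumps via scaling --- is exactly the route the paper intends; in fact the paper offers no proof of this lemma beyond the sentence that it follows ``the idea of the proof of Lemma 3.4'' in \cite{Carstensen;Bartels}, so your sketch is more explicit than the source. However, the explicit construction you describe has a genuine defect. You build $v_{\mathbf{z}}^\ast$ by starting from $-l_h|_{T_{\mathbf{z}}}$ on a reference element and propagating element by element across interior faces, adding at each crossing a polynomial correction whose trace matches $[l_h]$ on that face. This enforces continuity only across the faces actually used in the propagation. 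On a patch $\omega_{\mathbf{z}}$ around an interior vertex the elements form a cycle (in 3D, a more complicated complex), and continuity across the face that closes the cycle --- more generally across every interior face of $\omega_{\mathbf{z}}$ not on your spanning tree --- is not enforced; generically there is a nonzero mismatch there, so the resulting $v_{\mathbf{z}}^\ast$ fails to lie in $C(\omega_{\mathbf{z}})$ and $\sum_{\mathbf{z}}\varphi_{\mathbf{z}}v_{\mathbf{z}}^\ast\notin\tilde{S}$. The mechanism that actually works, and the one used in \cite{Carstensen;Bartels}, is nodal averaging (an Oswald-type interpolant): since $l_h$ is piecewise quadratic by (\ref{equation34}), prescribe $v_{\mathbf{z}}$ as the continuous piecewise quadratic on $\omega_{\mathbf{z}}$ whose value at each Lagrange node $\mathbf{y}$ is the average of $-l_h|_{T'}(\mathbf{y})$ over the elements $T'$ containing $\mathbf{y}$, overridden by $-g_{h,D}(\mathbf{y})$ at nodes on $\Gamma_D$. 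Continuity is then automatic; the nodal discrepancy $|v_{\mathbf{z}}(\mathbf{y})+l_h|_T(\mathbf{y})|$ telescopes through faces that all \emph{contain} $\mathbf{y}$, and each pointwise jump is bounded by $h_E^{-(d-1)/2}\|[l_h]\|_{E}$ via an inverse estimate for the polynomial $[l_h]|_E$ on $E$. This gives $\|v_{\mathbf{z}}+l_h\|_T^2\lesssim\sum_{E\subset\omega_{\mathbf{z}}}h_E\|[l_h]\|_E^2$ (with $l_h-g_{h,D}$ on Dirichlet faces), and (\ref{equation42}) follows after multiplying by $h_T^{-2}\approx h_E^{-2}$ and summing with finite overlap.

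A secondary caveat: the inequality $\|\phi_E\|_T^2\lesssim h_E\|\phi_E\|_E^2$ is false for an arbitrary polynomial $\phi_E$ on $T$ with a prescribed trace on $E$ (a polynomial can vanish on $E$ yet be large on $T$); it holds only when $\phi_E$ is produced by a fixed reference-element extension operator acting on polynomials of bounded degree, which your write-up leaves implicit. Neither issue affects the truth of the lemma, and your jump bookkeeping, use of shape regularity, and treatment of the Dirichlet faces all match what the cited Lemma 3.4 delivers; but the construction must be the averaging one, not the face-propagation one.
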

%\begin{proof}
%We get the estimate (\ref{equation42})
%\end{proof}

Using Lemma \ref{equation41}, we have a further conclusion as follows.
\begin{lemma}\label{equation43}
Let $J_{{\bf t}_{E}}$ and $\eta_{Q}$ denote the tangential jump and the quadrature indicator defined in (\ref{J-tE}) and   (\ref{quadrature-error}), respectively. Under the assumption of Lemma \ref{equation27+1},  it holds 
\begin{equation}\label{equation44}
\inf_{v_{h}\in\tilde{S}}||h^{-1}(v_{h}+l_{h})||^{2}\lesssim\displaystyle
\sum\limits_{E\in\varepsilon_{h}^{0}\cup\varepsilon_{D}}h_{E}J_{{\bf
t}_{E}}^{2}+\eta_{Q}^{2}.
\end{equation}
\end{lemma}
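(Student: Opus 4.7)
The plan is to sharpen the bound of Lemma \ref{equation41} by trading each $h_E^{-2}\tilde{J}_{{\bf t}_E}^2$ contribution for $h_E J_{{\bf t}_E}^2$ plus an $\eta_Q^2$-type error, using the postprocessing identity $\nabla l_h = -\overline{K^{-1}}\tilde{\bf u}_h$ together with face Poincaré inequalities. The tools I will rely on are (i) a face Poincaré inequality, applicable because $[l_h]$ has zero mean on each interior face and $l_h - g$ has zero mean on each Dirichlet face (both established just before the lemma), (ii) a scaled (discrete) trace inequality, (iii) Lemma \ref{equation27+1}, (iv) the approximation property (\ref{pressureadd}), and (v) the $W_\infty^1$ smoothness of $K^{-1}$.

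For each interior face $E\in\varepsilon_h^0$, I first apply the zero-mean Poincaré inequality on $E$ to obtain $||[l_h]||_E \lesssim h_E\,||\nabla_E[l_h]||_E$, where $\nabla_E$ denotes the surface gradient. Since $\nabla l_h|_{T_\pm} = -\overline{K^{-1}}|_{T_\pm}\tilde{\bf u}_h|_{T_\pm}$, the magnitude of $\nabla_E[l_h]$ coincides with that of $[\gamma_{{\bf t}_E}(\overline{K^{-1}}\tilde{\bf u}_h)]$, giving $h_E^{-2}\tilde{J}_{{\bf t}_E}^2 \lesssim h_E\,||[\gamma_{{\bf t}_E}(\overline{K^{-1}}\tilde{\bf u}_h)]||_E^2$. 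I would then add and subtract $K^{-1}{\bf u}_h$ and decompose the remainder as $\overline{K^{-1}}\tilde{\bf u}_h - K^{-1}{\bf u}_h = \overline{K^{-1}}(\tilde{\bf u}_h - \Pi_0{\bf u}_h) + \overline{K^{-1}}(\Pi_0{\bf u}_h - {\bf u}_h) + (\overline{K^{-1}} - K^{-1}){\bf u}_h$, so that the first summand produces exactly $h_E J_{{\bf t}_E}^2$ while the three error pieces are handled by a scaled trace inequality $h_E\,||\cdot||_E^2 \lesssim \sum_{T\subset\omega_E}||\cdot||_T^2$. The resulting element-wise norms are controlled by Lemma \ref{equation27+1} (first piece), the approximation bound (\ref{pressureadd}) (second piece), and the $W_\infty^1$ regularity of $K^{-1}$ combined with a polynomial inverse/trace estimate applied to ${\bf u}_h$ alone (third piece). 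Summing over $T\in\mathcal{T}_h$ collapses all three into an $\eta_Q^2$ contribution.

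For Dirichlet faces $E\in\varepsilon_D$ I would split $l_h - g_{h,D} = (l_h - g) + (g - g_{h,D})$. Standard piecewise-linear interpolation on $E$ gives $h_E^{-1}||g - g_{h,D}||_E^2 \lesssim h_E^3\,||\partial^2 g/\partial s^2||_E^2$, which is already part of $h_E J_{{\bf t}_E}^2$. For $l_h - g$, the postprocessing identity $\int_E(l_h - g) = 0$ permits the Poincaré step $h_E^{-1}||l_h - g||_E^2 \lesssim h_E\,||\partial_s l_h - \partial g/\partial s||_E^2$; substituting $\partial_s l_h = -\gamma_{{\bf t}_E}(\overline{K^{-1}}\tilde{\bf u}_h)$ and applying the same three-way decomposition isolates an $h_E\,||\gamma_{{\bf t}_E}(K^{-1}{\bf u}_h) - \partial g/\partial s||_E^2$ term (matching $h_E J_{{\bf t}_E}^2$ on $\Gamma_D$ up to sign convention) together with the same $\eta_Q^2$-controllable errors handled exactly as in the interior case.

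The main obstacle is the trace estimate for the non-polynomial piece $(\overline{K^{-1}} - K^{-1}){\bf u}_h$: neither the polynomial scaled trace inequality nor the $H^1$ trace inequality applies directly, since $\overline{K^{-1}}$ is piecewise constant but $K^{-1}$ is merely $W_\infty^1$. I would circumvent this by the pointwise bound $||\overline{K^{-1}} - K^{-1}||_{L^\infty(T)} \lesssim h_T$ together with a polynomial trace inequality applied to ${\bf u}_h$ alone, giving $h_E\,||(\overline{K^{-1}} - K^{-1}){\bf u}_h||_E^2 \lesssim h_T^2\,||{\bf u}_h||_T^2$, which sums to an $\eta_Q^2$ contribution. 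Careful bookkeeping of the contributions from all $E\in\varepsilon_h^0\cup\varepsilon_D$ then yields the lemma.
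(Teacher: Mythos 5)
Your proposal is correct and follows essentially the same route as the paper: a zero-mean face Poincar\'e inequality combined with the postprocessing identity $\nabla l_h=-\overline{K^{-1}}\tilde{\bf u}_h$, a decomposition inserting $\Pi_0{\bf u}_h$ and $K^{-1}{\bf u}_h$ to isolate the tangential jump $J_{{\bf t}_E}$, and trace/inverse estimates together with Lemma \ref{equation27+1} and (\ref{pressureadd}) to absorb the remainders into $\eta_Q^2$, with the analogous splitting $l_h-g_{h,D}=(l_h-g)+(g-g_{h,D})$ on Dirichlet faces. The only cosmetic difference is your handling of the $(\overline{K^{-1}}-K^{-1})$ piece via an $L^\infty$ bound and a polynomial trace inequality on ${\bf u}_h$, where the paper keeps $\Pi_0{\bf u}_h$ and uses the multiplicative $H^1$ trace inequality; both yield the same $\eta_Q^2$ contribution.
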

\begin{proof}
We only prove the three-dimensional case, since the two-dimensional
one is somewhat simpler and can be derived similarly. In the case
$E=T_{+}\cap T_{-}\in\varepsilon_{h}^{0}$, since
$\displaystyle\int_{E}[l_{h}]ds$ vanishes, a sidewise Poincar\'{e}
inequality and the postprocessing (\ref{equation34}) yield that
\begin{equation}\label{equation45}
\begin{array}{lll}
||[l_{h}]||_{E}&\lesssim&h_{E}||(\nabla l_{h}|_{T_{+}}-\nabla
l_{h}|_{T_{-}})\times{\bf n}_{E}||_{E}\vspace{2mm}\\
&=&h_{E}||(\overline{K^{-1}}\tilde{\bf
u}_{h}|_{T_{-}}-\overline{K^{-1}}\tilde{\bf
u}_{h}|_{T_{+}})\times{\bf n}_{E}||_{E}.
\end{array}
\end{equation}

Recall that $\Pi_{0}$ is the projection from ${\bf
H}({\rm div};\Omega)$ onto ${\bf V}_{h}^{0}$, and notice that
\begin{equation}\label{equation46}
\begin{array}{lll}
& &\overline{K^{-1}}\tilde{\bf
u}_{h}|_{T_{-}}-\overline{K^{-1}}\tilde{\bf
u}_{h}|_{T_{+}}\vspace{2mm}\\
&=&(\overline{K^{-1}}\tilde{\bf
u}_{h}|_{T_{-}}-\overline{K^{-1}}\Pi_{0}{\bf
u}_{h}|_{T_{-}})+(\overline{K^{-1}}\Pi_{0}{\bf
u}_{h}|_{T_{+}}-\overline{K^{-1}}\tilde{\bf u}_{h}|_{T_{+}})\vspace{2mm}\\
&&+(\overline{K^{-1}}\Pi_{0}{\bf
u}_{h}|_{T_{-}}-\overline{K^{-1}}\Pi_{0}{\bf
u}_{h}|_{T_{+}})\vspace{2mm}\\
&=&(\overline{K^{-1}}\tilde{\bf
u}_{h}|_{T_{-}}-\overline{K^{-1}}\Pi_{0}{\bf
u}_{h}|_{T_{-}})+(\overline{K^{-1}}\Pi_{0}{\bf
u}_{h}|_{T_{+}}-\overline{K^{-1}}\tilde{\bf u}_{h}|_{T_{+}})\vspace{2mm}\\
&&+(\overline{K^{-1}}\Pi_{0}{\bf
u}_{h}|_{T_{-}}-K^{-1}\Pi_{0}{\bf
u}_{h}|_{T_{-}})+(K^{-1}\Pi_{0}{\bf u}_{h}|_{T_{-}}-K^{-1}{\bf
u}_{h}|_{T_{-}})\vspace{2mm}\\
&&+(K^{-1}{\bf u}_{h}|_{T_{-}}-K^{-1}{\bf
u}_{h}|_{T_{+}})
+(K^{-1}{\bf u}_{h}|_{T_{+}}-K^{-1}\Pi_{0}{\bf
u}_{h}|_{T_{+}})\vspace{2mm}\\
&&+(K^{-1}\Pi_{0}{\bf
u}_{h}|_{T_{+}}-\overline{K^{-1}}\Pi_{0}{\bf u}_{h}|_{T_{+}}).
\end{array}
\end{equation}
%where
%\begin{equation}\label{equation47}
%\begin{array}{lll}
%& &\overline{K^{-1}}\Pi_{0}{\bf
%u}_{h}|_{T_{-}}-\overline{K^{-1}}\Pi_{0}{\bf
%u}_{h}|_{T_{+}}=(\overline{K^{-1}}\Pi_{0}{\bf
%u}_{h}|_{T_{-}}-K^{-1}\Pi_{0}{\bf
%u}_{h}|_{T_{-}})\vspace{2mm}\\
%& &\ \ \quad \quad+(K^{-1}\Pi_{0}{\bf u}_{h}|_{T_{-}}-K^{-1}{\bf
%u}_{h}|_{T_{-}})+(K^{-1}{\bf u}_{h}|_{T_{-}}-K^{-1}{\bf
%u}_{h}|_{T_{+}})\vspace{2mm}\\
%& &\ \ \quad\quad +(K^{-1}{\bf u}_{h}|_{T_{+}}-K^{-1}\Pi_{0}{\bf
%u}_{h}|_{T_{+}})+(K^{-1}\Pi_{0}{\bf
%u}_{h}|_{T_{+}}-\overline{K^{-1}}\Pi_{0}{\bf u}_{h}|_{T_{+}}).
%\end{array}
%\end{equation}

Employing the trace theorem, inverse estimate and  the
local shape regularity of the mesh, we have
\begin{equation}\label{equation48}
\begin{array}{lll}
& &||(\overline{K^{-1}}\tilde{\bf
u}_{h}|_{T_{-}}-\overline{K^{-1}}\Pi_{0}{\bf
u}_{h}|_{T_{-}})\times{\bf
n}_{E}||_{E}+||(\overline{K^{-1}}\Pi_{0}{\bf
u}_{h}|_{T_{+}}-\overline{K^{-1}}\tilde{\bf
u}_{h}|_{T_{+}})\times{\bf n}_{E}||_{E}\vspace{2mm}\\
& \lesssim&  h_{E}^{-1/2}||\overline{K^{-1}}(\tilde{\bf
u}_{h}-\Pi_{0}{\bf u}_{h})||_{\omega_{E}}.
\end{array}
\end{equation}
The trace theorem, together with the stable estimate
(\ref{equation12+3}) on the operator $\Pi_{0}$, also indicates
\begin{equation}\label{equation49}
\begin{array}{lll}
& &||(\overline{K^{-1}}\Pi_{0}{\bf u}_{h}|_{T_{-}}-K^{-1}\Pi_{0}{\bf
u}_{h}|_{T_{-}})\times{\bf n}_{E}||_{E}\vspace{2mm}\\
&\leq & ||(\overline{K^{-1}}-K^{-1})\Pi_{0}{\bf
u}_{h}|_{T_{-}}||_{\partial T_{-}}\vspace{2mm}\\
&\lesssim & ||(\overline{K^{-1}}-K^{-1})\Pi_{0}{\bf
u}_{h}||_{T_{-}}^{1/2}||(\overline{K^{-1}}-K^{-1})\Pi_{0}{\bf
u}_{h}||_{1,T_{-}}^{1/2}\vspace{2mm}\\
%& &\ \ \lesssim
%h_{T_{-}}^{1/2}||K^{-1}||_{1,\infty,T_{-}}^{1/2}||\Pi_{0}{\bf
%u}_{h}||_{T_{-}}^{1/2}||\overline{K^{-1}}-K^{-1}||_{1,\infty,T_{-}}^{1/2}||\Pi_{0}{\bf
%u}_{h}||_{1,T_{-}}^{1/2}\vspace{2mm}\\
& \lesssim & h_{T_{-}}^{1/2}||{\bf u}_{h}||_{1,T_{-}}.
\end{array}
\end{equation}
Similarly, it holds
\begin{equation}\label{equation50}
||(K^{-1}\Pi_{0}{\bf u}_{h}|_{T_{+}}-\overline{K^{-1}}\Pi_{0}{\bf
u}_{h}|_{T_{+}})\times{\bf n}_{E}||_{E}\lesssim
h_{T_{+}}^{1/2}||{\bf u}_{h}||_{1,T_{+}},
\end{equation}
%A combination of (\ref{equation49}) and (\ref{equation50}) yields
%\begin{equation}\label{equation51}
%\begin{array}{lll}
%&\ &||(\overline{K^{-1}}-K^{-1})\Pi_{0}{\bf
%u}_{h}|_{T_{-}}\times{\bf
%n}_{E}||_{E}+||(K^{-1}-\overline{K^{-1}})\Pi_{0}{\bf
%u}_{h}|_{T_{+}}\times{\bf n}_{E}||_{E}\vspace{2mm}\\
%&\ &\hspace{5mm}\lesssim h_{E}^{1/2}||{\bf u}_{h}||_{1,\omega_{E}}.
%\end{array}
%\end{equation}
\begin{equation}\label{equation51}
||(K^{-1}\Pi_{0}{\bf u}_{h}|_{T_{-}}-K^{-1}{\bf
u}_{h}|_{T_{-}})\times{\bf n}_{E}||_{E}\lesssim h_{E}^{1/2}||{\bf u}_{h}||_{1,T_{-}},
\end{equation}
and
\begin{equation}\label{equation52}
||(K^{-1}{\bf
u}_{h}|_{T_{+}}-K^{-1}\Pi_{0}{\bf u}_{h}|_{T_{+}})\times{\bf
n}_{E}||_{E}\lesssim h_{E}^{1/2}||{\bf u}_{h}||_{1,T_{+}},
\end{equation}
where in the latter two inequalities we have also used  the estimate (\ref{pressureadd}).

As a result, a combination of  (\ref{equation45})-(\ref{equation52}) shows
\begin{equation}\label{equation53}
\begin{array}{lll}
||[l_{h}]||_{E}&\lesssim&h_{E}\{h_{E}^{-1/2}||\overline{K^{-1}}(\tilde{\bf
u}_{h}-\Pi_{0}{\bf u}_{h})||_{\omega_{E}}\vspace{2mm}\\
& &\ \ \ +h_{E}^{1/2}||{\bf u}_{h}||_{1,\omega_{E}}+||[\gamma_{{\bf
t}_{E}}(K^{-1}{\bf u}_{h})]||_{E}\}.
\end{array}
\end{equation}

On the other hand, in the case $E\subset\partial T\cap\varepsilon_{D}$ it holds
\begin{equation*}
\displaystyle\frac{1}{|E|}\int_{E}(l_{h}-g)ds=0
\end{equation*}
due to
$\displaystyle\int_{E}l_{h}ds=\int_{E}gds$.
Using the triangle inequality, sidewise Poincar\'{e} inequality and
interpolation estimation, we have
\begin{equation}\label{equation54}
\begin{array}{lll}
||l_{h}-g_{h,D}||_{E}&\leq&||l_{h}-g||_{E}+||g-g_{h,D}||_{E}\vspace{2mm}\\
&\lesssim&h_{E}||\nabla l_{h}\times{\bf n}_{E}-\partial g/\partial
s||_{E}+h_{E}^{2}||\partial^{2}g/\partial s^{2}||_{E}.
\end{array}
\end{equation}
Similarly it holds
\begin{equation}\label{equation55}
\begin{array}{lll}
h_{E}||\nabla l_{h}\times{\bf n}_{E}-\frac{\partial g}{\partial
s}||_{E}&\lesssim& h_{E}^{1/2}||\overline{K^{-1}}(\tilde{\bf
u}_{h}-\Pi_{0}{\bf u}_{h})||_{T}+h_{E}^{3/2}||{\bf
u}_{h}||_{1,T}\vspace{2mm}\\
& &\ +h_{E}||K^{-1}{\bf u}_{h}\times{\bf n}_{E}-\partial g/\partial
s||_{E}.
\end{array}
\end{equation}
The above two estimates, (\ref{equation54}) and (\ref{equation55}), lead to
\begin{equation}\label{equation56}
\begin{array}{lll}
||l_{h}-g_{h,D}||_{E}&\lesssim&h_{E}^{1/2}||\overline{K^{-1}}(\tilde{\bf
u}_{h}-\Pi_{0}{\bf u}_{h})||_{T}+h_{E}^{3/2}||{\bf
u}_{h}||_{1,T}\vspace{2mm}\\
& &\ +h_{E}||K^{-1}{\bf u}_{h}\times{\bf n}_{E}-\partial g/\partial
s||_{E}+h_{E}^{2}||\partial^{2}g/\partial s^{2}||_{E}.
\end{array}
\end{equation}
%From the definition of $\tilde{J}_{{\bf t}_{E}}$ in Lemma
%\ref{equation41}, summing (\ref{equation53}) on all interior side
%$E\in\varepsilon_{h}^{0}$, and summing (\ref{equation56}) on all
%Dirichlet boundary side $E\subset\varepsilon_{D}$, and noticing that
%an element overlaps at most $d+1$ times in summation, we have
From the definition of $\tilde{J}_{{\bf t}_{E}}$ in Lemma
\ref{equation41}, the estimates (\ref{equation53}) and (\ref{equation56}) indicate
\begin{equation}\label{equation57}
\begin{array}{lll}
&
&\displaystyle\sum\limits_{E\in\varepsilon_{h}^{0}\cup\varepsilon_{D}}h_{E}^{-2}\tilde{J}_{{\bf
t}_{E}}^{2}=
\displaystyle\sum\limits_{E\in\varepsilon_{h}^{0}}h_{E}^{-2}h_{E}||[l_{h}]||_{E}^{2}+
\sum\limits_{E\in\varepsilon_{D}}h_{E}^{-2}h_{E}||l_{h}-g_{h,D}||_{E}^{2}\vspace{2mm}\\
&\lesssim &
\displaystyle\sum\limits_{T\in\mathcal{T}_{h}}h_{T}^{2}||{\bf
u}_{h}||_{1,T}^{2}+||\overline{K^{-1}}(\tilde{\bf u}_{h}-\Pi_{0}{\bf
u}_{h})||^{2}+\sum\limits_{E\in\varepsilon_{h}^{0}}h_{E}||[\gamma_{{\bf
t}_{E}}(K^{-1}{\bf
u}_{h})]||_{E}^{2}\vspace{2mm}\\
& &\ \ \ \
\displaystyle+\sum\limits_{E\in\varepsilon_{D}}(h_{E}||\gamma_{{\bf
t}_{E}}(K^{-1}{\bf u}_{h})-\partial g/\partial
s||_{E}^{2}+h_{E}^{3}||\partial^{2}g/\partial s^{2}||_{E}).
\end{array}
\end{equation}
By noticing that Lemma \ref{equation27+1} implies
\begin{equation*}%\label{equation58}
||\overline{K^{-1}}(\tilde{\bf u}_{h}-\Pi_{0}{\bf
u}_{h})||^{2}\lesssim\eta_{Q}^{2},
\end{equation*}
the estimate (\ref{equation57}), together with the
definitions of $J_{{\bf t}_{E}}$ and  $\eta_{Q}$, (\ref{J-tE}) and (\ref{eta-Q}),   yields
\begin{equation}\label{equation60}
\sum\limits_{E\in\varepsilon_{h}^{0}\cup\varepsilon_{D}}h_{E}^{-2}\tilde{J}_{{\bf
t}_E}^{2}\lesssim
\sum\limits_{E\in\varepsilon_{h}^{0}\cup\varepsilon_{D}}h_{E}J_{{\bf
t}_{E}}^{2}+\eta_{Q}^{2}.
\end{equation}
The desired result (\ref{equation44}) follows from Lemma
\ref{equation41} and (\ref{equation60}).
\end{proof}

{\bf The proof of Theorem \ref{equation7}}: Collecting
(\ref{equation25}), (\ref{equation36}) and (\ref{equation44}), we get
\begin{equation}\label{equation61}
\inf_{\beta\in H^{1}(\Omega)}||\nabla\beta-K^{-1}{\bf u}_{h}||\lesssim\{\sum\limits_{E\in\varepsilon_{h}^{0}\cup\varepsilon_{D}}h_{E}J_{{\bf
t}_{E}}^{2}\}^{1/2}+ \eta_{Q},
\end{equation}
which, together with the estimates  (\ref{equation22})-(\ref{equation24}), yields
\begin{equation}\label{equation62}
\begin{array}{lll}
& &\displaystyle||K^{-1/2}({\bf u}-{\bf u}_{h})||\lesssim
||h(f-\nabla\cdot{\bf
u}_{h})||+\{\sum\limits_{E\in\varepsilon_{D}}h_{E}^{3}||\partial^{2}g/\partial
s^{2}||_{E}^{2}\}^{1/2}\vspace{2mm}\\
& &\ \ \
\displaystyle\hspace{3.0cm}+\{\sum\limits_{E\in\varepsilon_{h}^{0}\cup\varepsilon_{D}}h_{E}J_{{\bf
t}_{E}}^{2}\}^{1/2}+
\eta_{Q}\vspace{2mm}\\
& &\ \displaystyle\lesssim\{||h(f-\nabla\cdot{\bf
u}_{h})||+\sum\limits_{E\in\varepsilon_{h}^{0}\cup\varepsilon_{D}}h_{E}J_{{\bf
t}_{E}}^{2}\}^{1/2}+ \eta_{Q}.
\end{array}
\end{equation}
The desired result (\ref{equation8}) then follows from (\ref{equation62})
and the definition (\ref{eta-h}) of $\eta_{h}$.

{\bf The proof of Theorem \ref{newpressure}}: Recall that $Q_{h}$ is
the $L^{2}-$projection operator onto $W_{h}$. Construct the following
auxiliary problem: Find $\phi\in H^{1}(\Omega)$ such that
\begin{equation}\label{pressure1}
\left \{ \begin{array}{ll}
  \nabla\cdot(K\nabla \phi)=Q_{h}p-p_{h} &\quad   \mbox{in}\ \ \Omega,\\
 \hspace{14.4mm}\phi=0 &\quad  \mbox{on}\ \partial\Omega.
 \end{array}\right.
\end{equation}
By the assumptions of $K$ and Lax-Milgram theorem, the operator
\begin{equation*}
\nabla\cdot(K\nabla\cdot):H_{0}^{1}(\Omega)\rightarrow
H^{-1}(\Omega)
\end{equation*}
is invertible and it holds the following regularity estimate:
\begin{equation}\label{pressure2}
||\phi||_{1}\lesssim||Q_{h}p-p_{h}||.
\end{equation}
Moreover, if $\Omega$ is convex,
$K\in\mathcal{C}^{1,0}(\overline{\Omega})$ implies that
\begin{equation*}
\nabla\cdot(K\nabla\cdot):H_{0}^{1}(\Omega)\cap
H^{2}(\Omega)\rightarrow L^{2}(\Omega)
\end{equation*}
is invertible (\cite{Grisvard}) and the   regularity estimate
\begin{equation}\label{dual problem 1}
||\phi||_{H^{2}(\bigcup\mathcal{T}_{h})}\lesssim||Q_{h}p-p_{h}||
\end{equation}
holds.
We emphasize that here we only need a regularity estimate on $||\phi||_{H^{2}(T)}$
for each $T\in\mathcal{T}_{h}$ and then assume a weakened constraint on $K$ such that
 (\ref{dual problem 1}) holds. In
\cite{Carstensen0}  {\it Carstensen} gave an example  where
 $K$ is piecewise constant and $\phi$ satisfies (\ref{dual
problem 1}) but is not $H^{2}$-regular.

Notice that the error equation of the MFMFE method (\ref{equation4})-(\ref{equation5}) can be written as
\begin{equation}\label{pressure3}
(K^{-1}({\bf u}-{\bf u}_{h}),{\bf
v}_{h})=(Q_{h}p-p_{h},\nabla\cdot{\bf v}_{h})-\sigma(K^{-1}{\bf
u}_{h},{\bf v}_{h}),\ \ \forall{\bf v}_{h}\in V_{h}.
\end{equation}
Recalling $\Pi $ is the standard projection operator  from ${\bf H}({\rm div};\Omega)\cap
(L^{\varrho}(\Omega))^{d}$ onto $V_{h}$, and taking ${\bf v}_{h}=\Pi (K\nabla\phi)$ in (\ref{pressure3}), from (\ref{pressure1})  and
   the commuting property  $\nabla\cdot (\Pi K\nabla\phi)=Q_h\nabla\cdot ( K\nabla\phi)$, 
 we have
\begin{equation}\label{pressure4}
\begin{array}{lll}
||Q_{h}p-p_{h}||^{2}&=&(Q_{h}p-p_{h},\nabla\cdot(\Pi
K\nabla\phi))\vspace{2mm}\\
&=&(K^{-1}({\bf u}-{\bf u}_{h}),\Pi (K\nabla\phi))+\sigma(K^{-1}{\bf
u}_{h},\Pi K\nabla\phi).
\end{array}
\end{equation}

Since $(\nabla\cdot({\bf u}-{\bf
u}_{h}),w_{h})=0, \forall w_{h}\in W_{h}$, by integration by parts, the 
approximation property of $\Pi$ and  the estimates  (\ref{pressure2})-(\ref{dual problem 1}), we have
\begin{equation}\label{pressure5}
\begin{array}{lll}
& &(K^{-1}({\bf u}-{\bf u}_{h}),\Pi (K\nabla\phi))=(K^{-1}({\bf
u}-{\bf u}_{h}),\Pi (K\nabla\phi)-K\nabla\phi)+({\bf u}-{\bf
u}_{h},\nabla\phi)\vspace{2mm}\\
& &\ \ =(K^{-1}({\bf u}-{\bf u}_{h}),\Pi
(K\nabla\phi)-K\nabla\phi)-(\nabla\cdot({\bf u}-{\bf
u}_{h}),\phi)\vspace{2mm}\\
& &\ \ =(K^{-1}({\bf u}-{\bf u}_{h}),\Pi
(K\nabla\phi)-K\nabla\phi)-(\nabla\cdot({\bf u}-{\bf
u}_{h}),\phi-Q_{h}\phi)\vspace{2mm}\\
& &\ \ \lesssim \left(||hK^{-1/2}({\bf u}-{\bf
u}_{h})||+||h\nabla\cdot({\bf u}-{\bf u}_{h})||\right)||Q_{h}p-p_{h}||.
\end{array}
\end{equation}
On the other hand, a combination of (\ref{pressure7}), (\ref{equation12+3}) and
(\ref{dual problem 1}) yields
\begin{equation}\label{pressure9}
\begin{array}{lll}
|\sigma(K^{-1}{\bf u}_{h},\Pi
K\nabla\phi)|&\lesssim&\displaystyle\sum\limits_{T\in\mathcal{T}_{h}}h_{T}^{2}||{\bf
u}_{h}||_{1,T}||\Pi (K\nabla\phi)||_{1,T}\vspace{2mm}\\
&\lesssim&\displaystyle(\sum\limits_{T\in\mathcal{T}_{h}}h_{T}^{4}||{\bf
u}_{h}||_{1,T}^{2})^{1/2}||Q_{h}p-p_{h}||.
\end{array}
\end{equation}

Noticing $\nabla\cdot({\bf u}-{\bf u}_{h})=f-Q_{h}f$, from 
(\ref{pressure4})-(\ref{pressure9}) and the estimate (\ref{equation8}) of Theorem \ref{equation7} we
obtain the assertion (\ref{newpressure1}), i.e.
\begin{equation*}
||Q_{h}p-p_{h}||\lesssim h_{\rm max}(\eta_{h}+\eta_{Q})+||h(f-\nabla\cdot{\bf u}_{h})||.
\end{equation*} 
A triangle inequality, the relation ${\bf u}=-K\nabla p$ and the approximation property of $Q_h$ further
imply
\begin{equation*}
\begin{array}{lll}
||p-p_{h}||&\leq&||p-Q_{h}p||+||Q_{h}p-p_{h}||\lesssim||h\nabla
p||+||Q_{h}p-p_{h}||\vspace{2mm}\\
&\leq&||hK^{-1}({\bf u}-{\bf u}_{h})||+||hK^{-1}{\bf
u}_{h}||+||Q_{h}p-p_{h}||.
\end{array}
\end{equation*}
This inequality, together with   the estimate (\ref{newpressure1}), leads to the conclusion
(\ref{newpressure2}).

\section{Analysis for the efficiency} This section is devoted to the proof of Theorem \ref{equation9}. For the sake of
simplicity, we assume that $K^{-1}$ is a matrix of piecewise polynomial
functions. Since the two terms $||h(f-\nabla\cdot{\bf u}_{h})||$ and 
$\displaystyle\{\sum\limits_{E\in\varepsilon_{D}}h_{E}^{3}||\frac{\partial^{2}g}{\partial s^{2}}||_{E}^{2}\}^{1/2}$ 
in $\eta_{h}$ are of high order, they are directly incorporated in $h.o.t.$ as a high order term.
Using   standard analytical techniques, we easily
obtain Lemma \ref{efficiency jump}.
\begin{lemma}\label{efficiency jump}
Let $\eta_{h}$ denote the discretization indicator given by (\ref{eta-h}).
Then it holds
\begin{equation}\label{efficiency jump 1}
\displaystyle\eta_{h}\lesssim||K^{-1/2}({\bf u}-{\bf
u}_{h})||+h.o.t.
\end{equation}
\end{lemma}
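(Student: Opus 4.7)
The plan is to bound each piece of $\eta_h^2 = \|h(f-\nabla\cdot{\bf u}_h)\|^2 + \sum_{T\in\mathcal{T}_h}\sum_{E\in\varepsilon_T} h_E J_{{\bf t}_E}^2$ separately. First, \eqref{equation5} yields $\nabla\cdot{\bf u}_h = Q_h f$, so $\|h(f-\nabla\cdot{\bf u}_h)\| = \|h(f-Q_h f)\|$ is a standard data oscillation; together with the boundary piece $h_E^2\|\partial^2 g/\partial s^2\|_E^2$ inside $J_{{\bf t}_E}$ on Dirichlet faces, this is absorbed into $h.o.t.$ It therefore suffices to control the tangential jumps $h_E^{1/2}\|[\gamma_{{\bf t}_E}(K^{-1}{\bf u}_h)]\|_E$ on interior faces and $h_E^{1/2}\|\gamma_{{\bf t}_E}(K^{-1}{\bf u}_h) - \partial g/\partial s\|_E$ on Dirichlet faces by the local velocity error.

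The key algebraic observation is that the exact solution satisfies $K^{-1}{\bf u} = -\nabla p$ with $p\in H^1(\Omega)$, so formally ${\rm Curl}(K^{-1}{\bf u}) = 0$: the tangential trace $\gamma_{{\bf t}_E}(K^{-1}{\bf u})$ has no jump across any interior face and agrees (up to sign) with $\partial g/\partial s$ on $\Gamma_D$. Consequently, on any interior face $E$,
\[
[\gamma_{{\bf t}_E}(K^{-1}{\bf u}_h)]_E = [\gamma_{{\bf t}_E}(K^{-1}({\bf u}_h-{\bf u}))]_E,
\]
and the Dirichlet boundary expression collapses to $\gamma_{{\bf t}_E}(K^{-1}({\bf u}_h-{\bf u}))$ plus an oscillation in $g$ that contributes only to $h.o.t.$ This reduces efficiency to bounding face integrals of $K^{-1}({\bf u}-{\bf u}_h)$ by local volume norms.

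To carry this out I would invoke the standard Verf\"urth bubble-function technique. For an interior face $E$, choose a face bubble $\phi_E\in H_0^1(\omega_E)$ peaking on $E$, extend the jump ${\bf J}_E := [\gamma_{{\bf t}_E}(K^{-1}{\bf u}_h)]$ to a piecewise polynomial tangential field on $\omega_E$, and test against an appropriate vector lifting of $\phi_E{\bf J}_E$. Norm equivalence on the finite-dimensional polynomial space of ${\bf J}_E$, elementwise integration by parts (with ${\rm Curl}(K^{-1}{\bf u}) = 0$ annihilating the bulk contribution from the continuous solution), and the scaling bounds $\|\phi_E^{1/2}{\bf J}_E\|_E\approx\|{\bf J}_E\|_E$ and $\|\phi_E{\bf J}_E\|_{\omega_E}\lesssim h_E^{1/2}\|{\bf J}_E\|_E$ then deliver
\[
h_E\|{\bf J}_E\|_E^2 \lesssim \|K^{-1}({\bf u}-{\bf u}_h)\|_{\omega_E}^2 \lesssim \|K^{-1/2}({\bf u}-{\bf u}_h)\|_{\omega_E}^2.
\]
A one-sided bubble variant on $\omega_E = T$ handles Dirichlet faces, with the $h_E^2\|\partial^2 g/\partial s^2\|_E^2$ remainder absorbed into $h.o.t.$ Summing over faces and using finite overlap of the patches $\omega_E$ yields \eqref{efficiency jump 1}.

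The principal technical obstacle is the bubble construction for the \emph{vector-valued} tangential component $\gamma_{{\bf t}_E}({\bf v}) = {\bf v}\times{\bf n}_E$ when $d=3$: the test field must be tangential to $E$, supported in $\omega_E$, and compatible with the ${\rm Curl}$ structure so that integration by parts produces precisely $K^{-1}({\bf u}_h-{\bf u})$ in the bulk. This is standard (cf.\ \cite{Carstensen;Bartels}) but is the only real technical point; the case $d=2$ is immediate from the classical scalar edge-bubble argument. As highlighted in Remark \ref{newremark}, no separate $\|h\,{\rm Curl}_h(K^{-1}{\bf u}_h)\|$ indicator appears in $\eta_h$, so the bulk residue must be absorbed directly via ${\rm Curl}_h(K^{-1}{\bf u}_h) = {\rm Curl}_h(K^{-1}({\bf u}_h-{\bf u}))$ combined with an elementwise inverse inequality.
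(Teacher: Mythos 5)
Your argument is correct and is precisely the standard Verf\"urth edge/face-bubble technique that the paper invokes without writing out (its entire proof of Lemma \ref{efficiency jump} is the sentence ``Using standard analytical techniques, we easily obtain Lemma \ref{efficiency jump}''). One small caveat on your final remark: the bulk term $h_T\|{\rm Curl}_h(K^{-1}{\bf u}_h)\|_T$ left over after the face integration by parts cannot be absorbed by an inverse inequality applied to $K^{-1}({\bf u}_h-{\bf u})$ (which is not a polynomial); it requires its own element-bubble step, i.e.\ testing against $\psi_T\,{\rm Curl}_h(K^{-1}{\bf u}_h)$ and using $\int_T\nabla p\cdot{\rm Curl}\,w=0$ for $w\in H_0^1(T)$ to convert ${\bf u}_h$ into ${\bf u}_h-{\bf u}$, after which the scaling $\|{\rm Curl}(\psi_T v)\|_T\lesssim h_T^{-1}\|v\|_T$ gives $h_T\|{\rm Curl}_h(K^{-1}{\bf u}_h)\|_T\lesssim\|K^{-1}({\bf u}-{\bf u}_h)\|_T$.
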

\begin{lemma}\label{efficiency volumn }
Let $\eta_{Q}$ denote the quadrature indicator given by (\ref{eta-Q}).
Then it holds
\begin{equation}\label{efficiency volumn 1}
\eta_{Q}\lesssim||K^{-1/2}({\bf u}-{\bf
u}_{h})||+||h^{-1}(p-p_{h})||.
\end{equation}
\end{lemma}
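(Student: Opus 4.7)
The plan is to exploit the fact that, on simplicial meshes with affine Piola transform, ${\bf u}_h|_T$ is a polynomial vector of degree at most one on each element $T\in\mathcal{T}_h$. Consequently $\nabla{\bf u}_h$ is piecewise constant and the standard local inverse inequality $h_T\|\nabla({\bf u}_h-{\bf c})\|_T\lesssim\|{\bf u}_h-{\bf c}\|_T$ is available for every constant vector ${\bf c}\in\mathbb{R}^{d}$. I first split
\[
h_T^2\|{\bf u}_h\|_{1,T}^2=h_T^2\|{\bf u}_h\|_T^2+h_T^2\|\nabla{\bf u}_h\|_T^2
\]
and estimate the two pieces separately.

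For the gradient piece I would choose ${\bf c}=\bar{\bf u}|_T$, the elementwise mean of ${\bf u}$ on $T$, apply the inverse inequality, and combine it with a triangle inequality and the Poincar\'e--Wirtinger estimate $\|{\bf u}-\bar{\bf u}\|_T\lesssim h_T\|\nabla{\bf u}\|_T$, obtaining
\[
h_T\|\nabla{\bf u}_h\|_T\lesssim\|{\bf u}_h-\bar{\bf u}\|_T\leq\|{\bf u}-{\bf u}_h\|_T+\|{\bf u}-\bar{\bf u}\|_T\lesssim\|{\bf u}-{\bf u}_h\|_T+h_T\|\nabla{\bf u}\|_T.
\]
The remaining data-dependent term $h_T\|\nabla{\bf u}\|_T$ is then rewritten via ${\bf u}=-K\nabla p$ and the postprocessed pressure $l_h$ of (\ref{equation34})--(\ref{equation35}): since $l_h$ is locally polynomial with $\overline{l_h}|_T=p_h|_T$, a local inverse estimate delivers $\|\nabla l_h\|_T\lesssim h_T^{-1}\|l_h-p_h\|_T\leq h_T^{-1}(\|l_h-p\|_T+\|p-p_h\|_T)$, so that comparing $\nabla l_h$ with $-\overline{K^{-1}}\tilde{\bf u}_h$ and invoking Lemma~\ref{equation27+1} lets me trade $h_T\|\nabla{\bf u}\|_T$ for the combination $\|h^{-1}(p-p_h)\|+\|K^{-1/2}({\bf u}-{\bf u}_h)\|+\eta_Q$. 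A similar treatment controls the $L^2$ piece $h_T\|{\bf u}_h\|_T$ after decomposing ${\bf u}_h=\Pi_0{\bf u}_h+({\bf u}_h-\Pi_0{\bf u}_h)$ and using $\|\tilde{\bf u}_h\|_T\approx\|\nabla l_h\|_T$ together with Lemma~\ref{equation27+1}.

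Summing over $T$, using the uniform equivalence $\|\cdot\|\lesssim\|K^{-1/2}\cdot\|$ granted by (\ref{newadd}), and absorbing the $\eta_Q$-multiples into the left-hand side yields (\ref{efficiency volumn 1}). The main obstacle is to keep the postprocessing error $\|l_h-p\|$ under tight control, so that it is dominated by $\|K^{-1/2}({\bf u}-{\bf u}_h)\|$ plus absorbable multiples of $\eta_Q$ without reintroducing uncontrolled high-order data terms; this in turn relies on the auxiliary problem (\ref{equation26})--(\ref{equation27}) and the comparison with the MFMFE scheme already exploited in the reliability analysis.
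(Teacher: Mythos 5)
There is a genuine gap in your argument, located exactly where you park the term $h_T\|\nabla{\bf u}\|_T$. After the Poincar\'e--Wirtinger step you are left with $h_T\|\nabla{\bf u}_h\|_T\lesssim\|{\bf u}-{\bf u}_h\|_T+h_T\|\nabla{\bf u}\|_T$, and you call the last contribution ``data-dependent''; it is not. It depends on the unknown exact solution (through the second derivatives of $p$, since ${\bf u}=-K\nabla p$), it is generically of the \emph{same} order as the velocity error for this lowest-order method, and it is not dominated by $\|K^{-1/2}({\bf u}-{\bf u}_h)\|+\|h^{-1}(p-p_h)\|$. The mechanism you propose to remove it --- inverse estimates applied to the postprocessed pressure $l_h$ and a comparison with $-\overline{K^{-1}}\tilde{\bf u}_h$ --- cannot work, because inverse estimates are available only for piecewise polynomials; they say nothing about $\nabla{\bf u}$ or $\nabla^2 p$. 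The same objection applies to your treatment of the $L^2$ piece, where a triangle inequality would leave $h_T\|{\bf u}\|_T$. Finally, your closing step ``absorbing the $\eta_Q$-multiples into the left-hand side'' is not legitimate: the constants produced by Lemma~\ref{equation27+1} and the intermediate inequalities are generic (not small), so a bound of the form $\eta_Q\lesssim(\cdots)+C\eta_Q$ with unknown $C$ cannot be rearranged.

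The paper's proof avoids all of this with a much shorter route. A single inverse inequality on the polynomial ${\bf u}_h$ gives $h_T\|{\bf u}_h\|_{1,T}\lesssim\|{\bf u}_h\|_T\lesssim\|K^{-1}{\bf u}_h\|_T$, so the whole lemma reduces to bounding $\|K^{-1}{\bf u}_h\|_T$. This is done by a standard bubble-function argument: since $\nabla p_h|_T=0$ and $K^{-1}{\bf u}+\nabla p=0$, one writes
\begin{equation*}
\|K^{-1}{\bf u}_h\|_T^2\lesssim\bigl(\psi_T K^{-1}{\bf u}_h,\,K^{-1}({\bf u}_h-{\bf u})\bigr)_T-\bigl(\nabla\cdot(\psi_T K^{-1}{\bf u}_h),\,p_h-p\bigr)_T,
\end{equation*}
where the integration by parts (using $\psi_T|_{\partial T}=0$) moves the derivative off $p-p_h$, and an inverse estimate on the polynomial $\psi_T K^{-1}{\bf u}_h$ yields the factor $h_T^{-1}\|p-p_h\|_T$. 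No regularity of the exact solution beyond ${\bf u}\in L^2$ is used, no solution-dependent remainder appears, and no absorption is needed. If you want to salvage your approach, you would have to replace the comparison with $\bar{\bf u}|_T$ by a comparison with the exact relation $K^{-1}{\bf u}=-\nabla p$ itself, which is precisely what the bubble-function device accomplishes.
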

\begin{proof}
An inverse inequality and the assumption (\ref{newadd}) yield
\begin{equation}\label{equation69}
||{\bf u}_{h}||_{1,T}\lesssim h_{T}^{-1}||{\bf u}_{h}||_{T}\lesssim
h_{T}^{-1}||K^{-1}{\bf u}_{h}||_{T}.
\end{equation}
For all $T\in\mathcal{T}_{h}$, let $\psi_{T}$ denote the bubble
function on $T$ with $\psi_{T}|_{\partial T}=0$ and
$0\leq\psi_{T}\leq1$. Then the two norms, $||\psi_{T}^{1/2}\cdot||_{T}$
and $||\cdot||_{T}$, are equivalent for polynomials. 
Since $\nabla p_{h}|_{T}=0$ due to $p_h\in W_h$, it then  holds
\begin{equation}\label{equation70}
\begin{array}{lll}
||K^{-1}{\bf u}_{h}||_{T}^{2}&=&||K^{-1}{\bf u}_{h}+\nabla
p_{h}||_{T}^{2}\vspace{2mm}\\
&\lesssim&||\psi_{T}^{1/2}(K^{-1}{\bf u}_{h}+\nabla
p_{h})||_{T}^{2}\vspace{2mm}\\
&=&\left(\psi_{T}K^{-1}{\bf u}_{h},K^{-1}{\bf u}_{h}+\nabla
p_{h}\right)_{T}\vspace{2mm}\\
&=&\left(\psi_{T}K^{-1}{\bf u}_{h},K^{-1}({\bf u}_{h}-{\bf
u})\right)_{T}+\left(\psi_{T}K^{-1}{\bf u}_{h},\nabla
(p_{h}-p)\right)_{T}\vspace{2mm}\\
&=&\left(\psi_{T}K^{-1}{\bf u}_{h},K^{-1}({\bf u}_{h}-{\bf
u})\right)_{T}-\left(\nabla\cdot(\psi_{T}K^{-1}{\bf u}_{h}),p_{h}-p\right)_{T}\vspace{2mm}\\
&\lesssim&||K^{-1}{\bf u}_{h}||_{T}\left(||K^{-1/2}({\bf u}-{\bf
u}_{h})||_{T}+h_{T}^{-1}||p-p_{h}||_{T}\right),
\end{array}
\end{equation}
where in the fourth and last lines we have used   the relation ${\bf u}=-K\nabla p$ and an inverse inequality, respectively.
This inequality, together with (\ref{equation69}), shows
\begin{equation*}
h_{T}||{\bf u}_{h}||_{1,T}\lesssim||K^{-1/2}({\bf u}-{\bf
u}_{h})||_{T}+h_{T}^{-1}||(p-p_{h})||_{T},
\end{equation*}
from which the desired estimate (\ref{efficiency volumn 1}) follows.
\end{proof}

{\bf The proof of Theorem \ref{equation9}}. From (\ref{equation70})
we obtain
\begin{equation}\label{equation71}
||hK^{-1}{\bf u}_{h}||\lesssim||hK^{-1/2}({\bf u}-{\bf
u}_{h})||+||p-p_{h}||,
\end{equation}
which, together with Lemmas \ref{efficiency jump}-\ref{efficiency volumn },
leads to the desired efficiency estimate of Theorem
\ref{equation9}.

\section{Numerical experiments}
In this section, we   use two model problems to test the performance of the developed  {\it a posteriori } error estimator for the MFMFE
method. We consider two types of meshes:  uniformly refined meshes and adaptively refined meshes. The latter type of meshes is generated by 
  a standard adaptive algorithm based on the {\it a posteriori } error estimation. In the
first example, the permeability $K$ equals to identity matrix   and
$\Omega$ is an $L$-shape domain. In the second
example,  $K$ is inhomogeneous and anisotropic. We
are thus able to study how meshes adapt to various effect from lack
of regularity of solutions to non-convexity of domains. 

\subsection*{Example 7.1} We
consider the problem (\ref{equation1}) in an $L$-shape domain
$\Omega=\{(-1,1)\times(0,1)\}\cup\{(-1,0)\times(-1,0)\}$ with Dirichlet boundary conditions  and  $K=I$ (identity matrix). The exact solution is given by
\begin{equation*}
p(\rho,\theta)=\rho^{r}\sin(r\theta),
\end{equation*}
where $\rho,\theta$ are the polar coordinates, $r$ is a
parameter. We consider  two cases
for $r$: $r=0.4$ and $r=0.1$. Some
simple calculations  show $f=0$.
\begin{figure}[htbp]
  \begin{minipage}[t]{0.5\linewidth}
    \centering
    \includegraphics[width=2.5in]{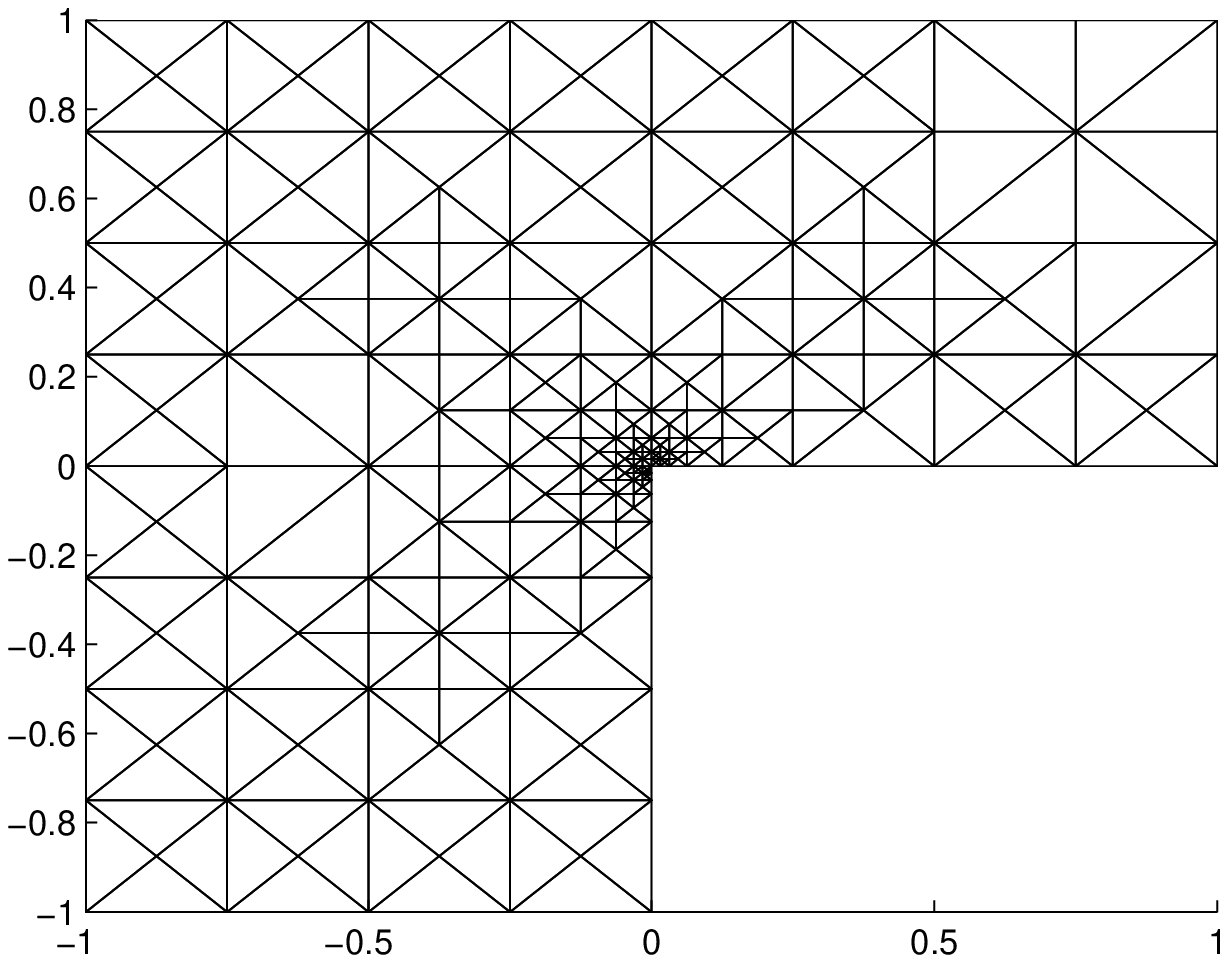}\\
  \end{minipage}
  \begin{minipage}[t]{0.5\linewidth}
    \centering
    \includegraphics[width=2.5in]{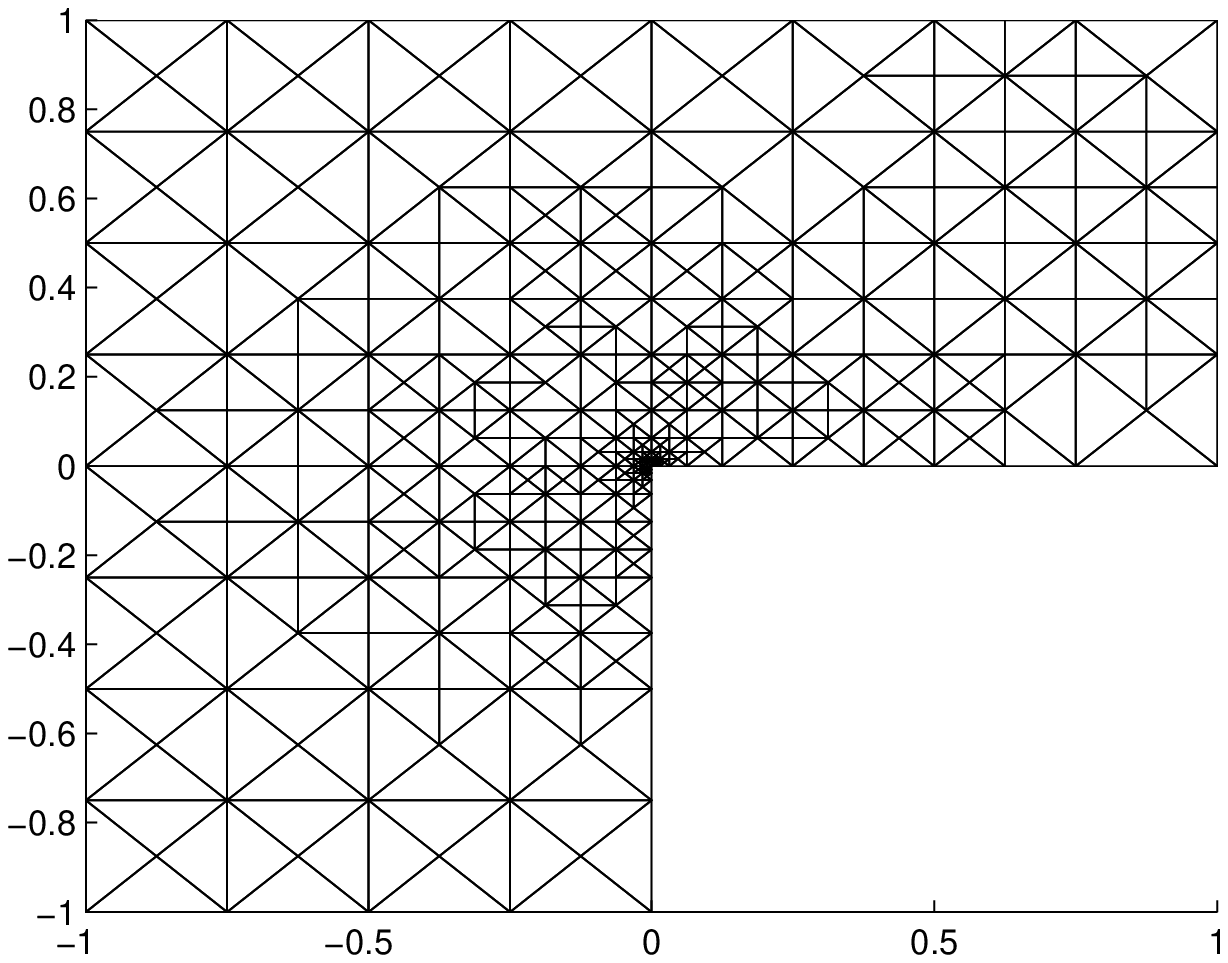}\\
  \end{minipage}
\addtocontents{lof}{figure}{FIG 7.1. {\small {\it A mesh with 347
triangles, iteration 6 (left) and a mesh with 578 triangles,
iteration 8 (right) in case $r=0.4$.}}}\\
\end{figure}
\begin{figure}[htbp]
  \begin{minipage}[t]{0.5\linewidth}
    \centering
    \includegraphics[width=2.5in]{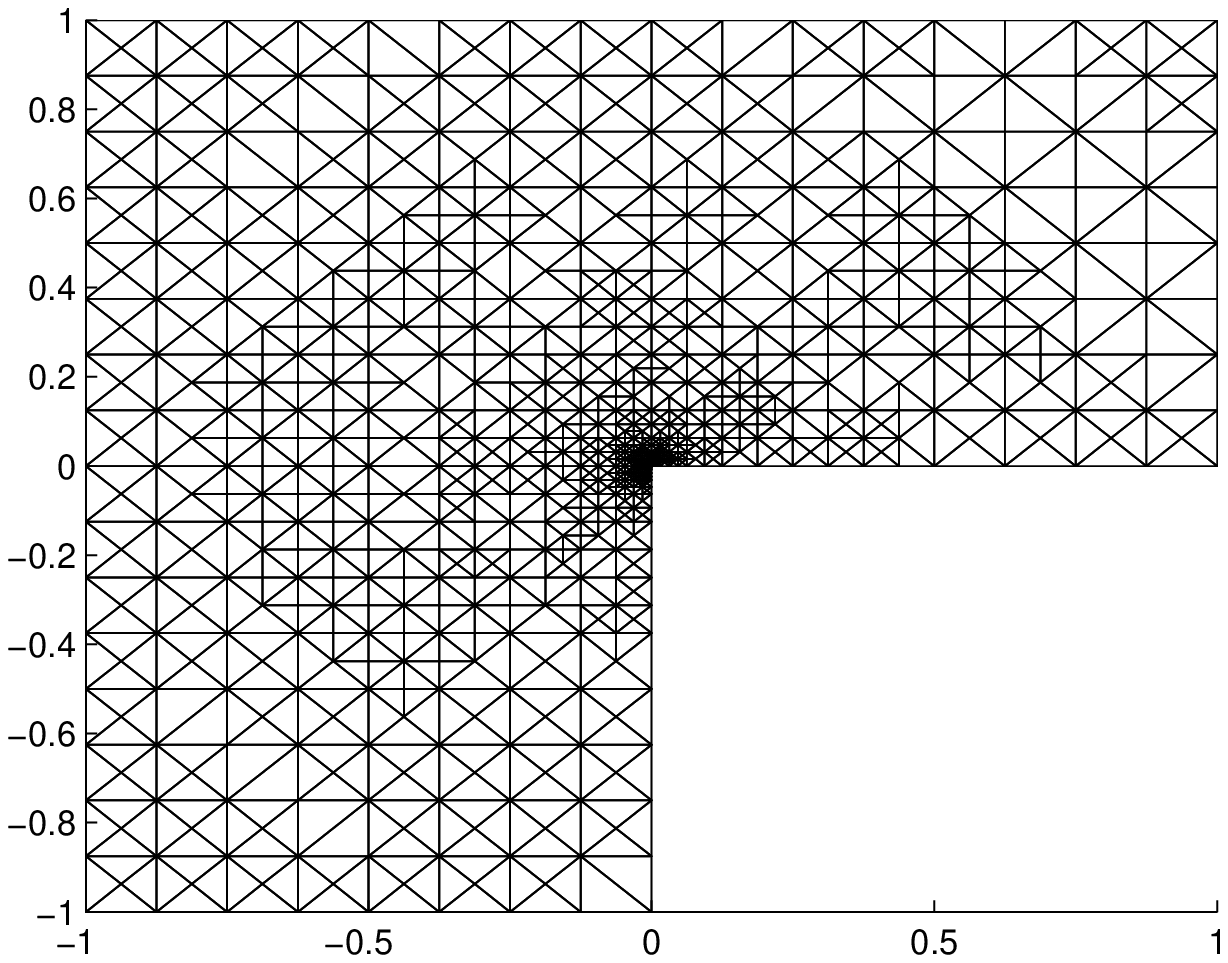}\\
  \end{minipage}
  \begin{minipage}[t]{0.5\linewidth}
    \centering
    \includegraphics[width=2.5in]{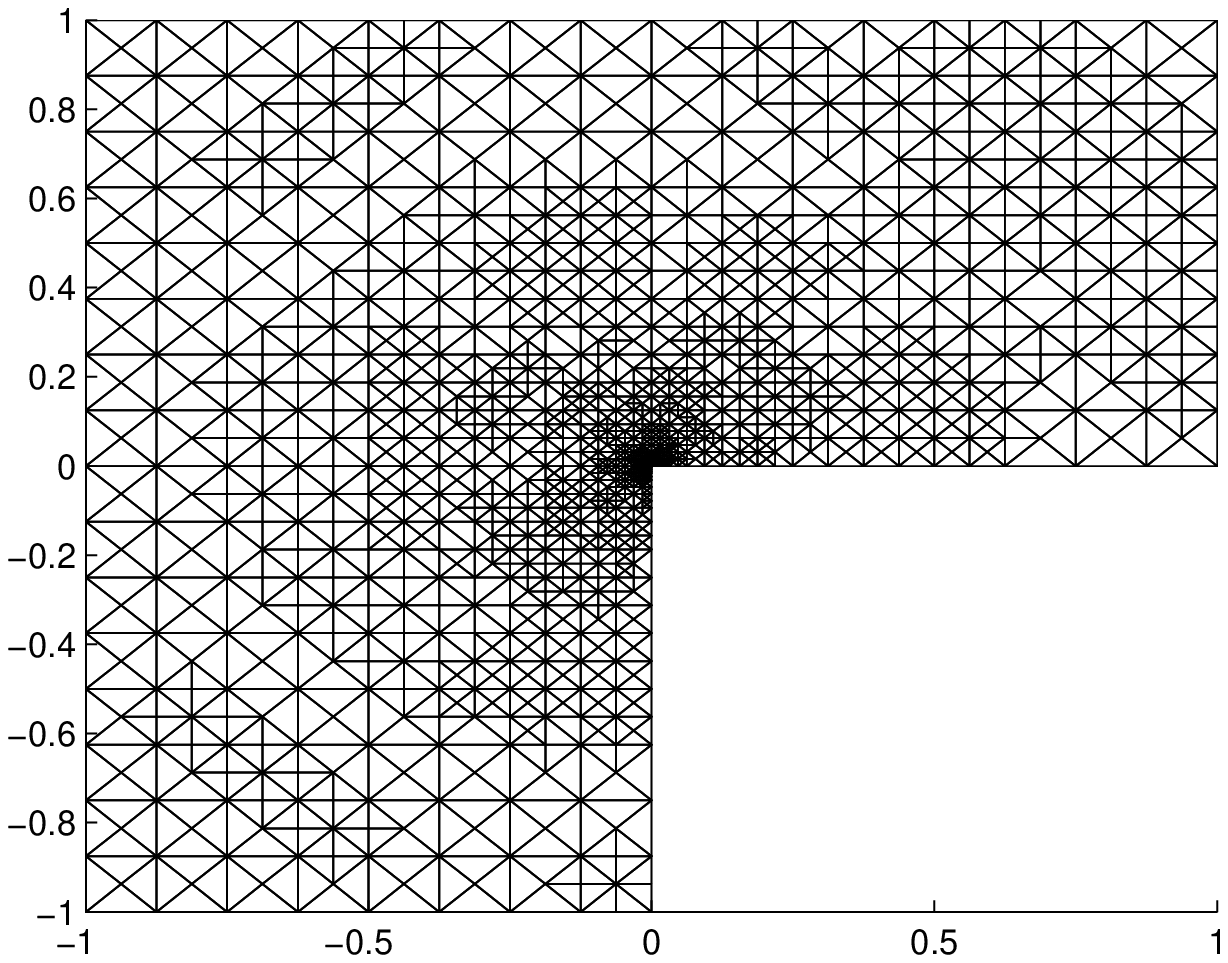}\\
  \end{minipage}
\addtocontents{lof}{figure}{FIG 7.2. {\small {\it  A mesh with 1607
triangles, iteration 11 (left) and a mesh with 2618 triangles,
iteration 12 (right) in case $r=0.4$.}}}
\end{figure}

It is well known that this model possesses singularity at the
origin and  holds $p\in H^{1+r-\epsilon}(\Omega)$ for any $\epsilon>0$. The singularity of
the solution in the case $r=0.4$ is weaker than  in the case
$r=0.1$.The original mesh consists of 6 right-angled triangles. 
%We test the {\it a posteriori } error estimators proposed in section 4. 
In the adaptive algorithm we
first solve  the
MFMFE scheme (\ref{equation4})-(\ref{equation5}), then mark elements in terms of
D\"{o}rfler marking with the marking parameter $\tilde\theta=0.5$, and finally
use the "longest edge" refinement to recover an admissible mesh.
In particular, the uniform refinement means that all elements should be
marked.
\begin{figure}[htbp]
  \begin{minipage}[t]{0.5\linewidth}
    \centering
    \includegraphics[width=2.5in]{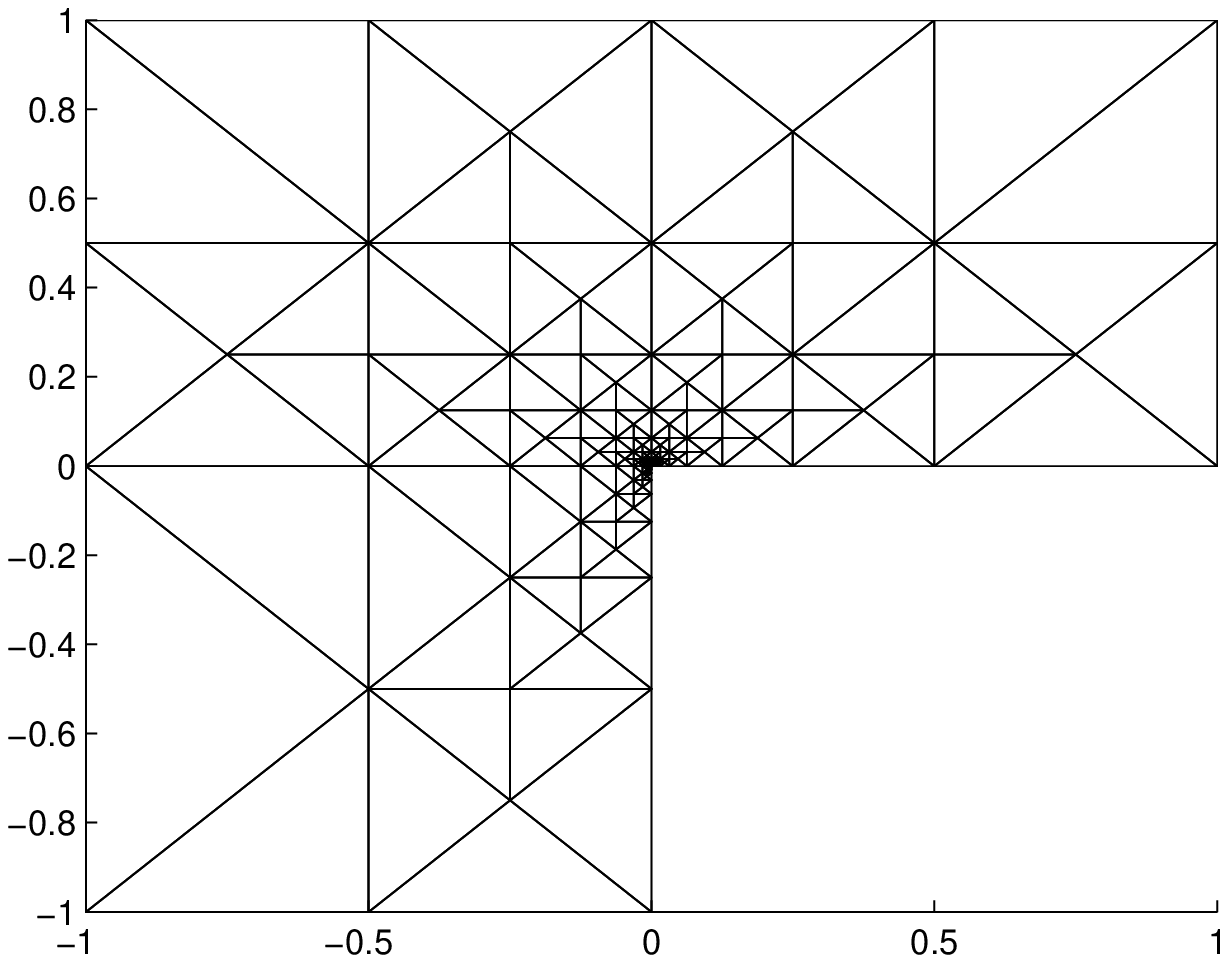}\\
  \end{minipage}
  \begin{minipage}[t]{0.5\linewidth}
    \centering
    \includegraphics[width=2.5in]{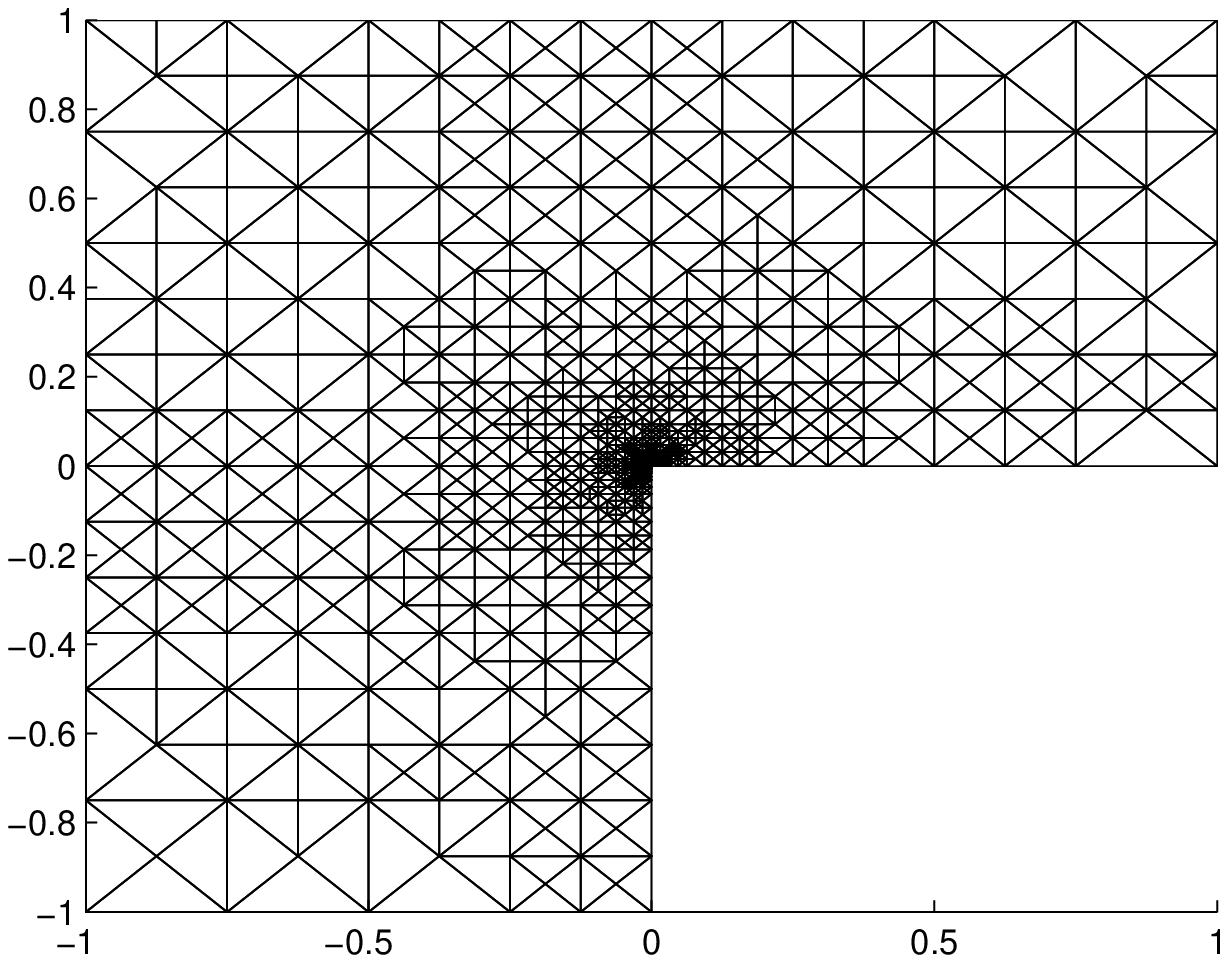}\\
  \end{minipage}
\addtocontents{lof}{figure}{FIG 7.3. {\small {\it  A mesh with 245
triangles, iteration 10 (left) and a mesh with 3265 triangles,
iteration 24 (right) in case $r=0.1$.}}}
\end{figure}

From Figs 7.1-7.2 with the   parameter $r=0.4$ and
Fig 7.3 with the parameter $r=0.1$, we see that using the adaptive algorithm the refinement
concentrates around the origin. This means that the predicted error
estimator captures well the singularity of the solution, and that
the stronger the solution possesses singularity, the better the {\it a
posteriori} error estimator can identify.
\begin{figure}[htbp]
  \begin{minipage}[t]{0.5\linewidth}
    \centering
    \includegraphics[width=2.5in]{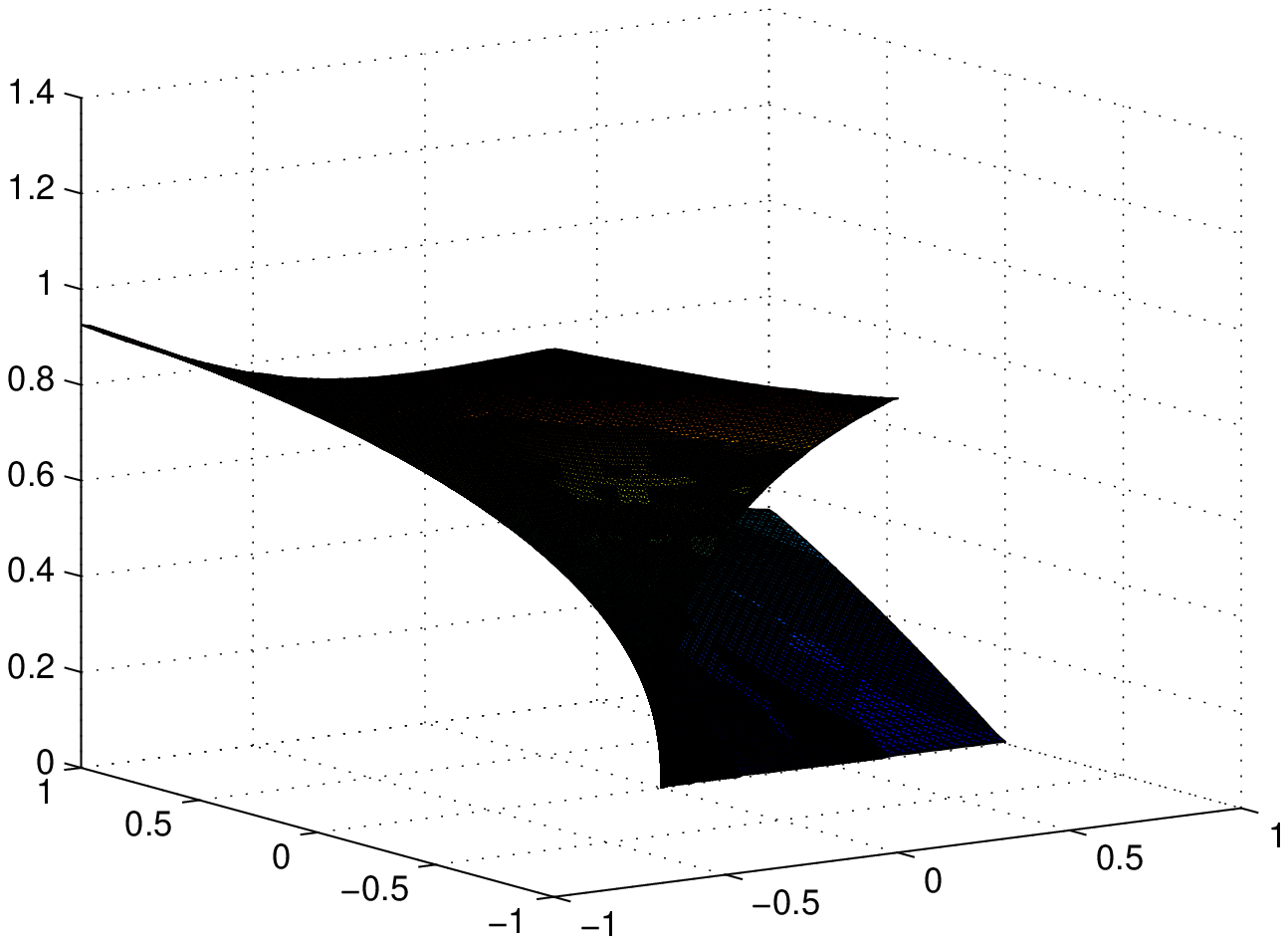}\\
  \end{minipage}
  \begin{minipage}[t]{0.5\linewidth}
    \centering
    \includegraphics[width=2.5in]{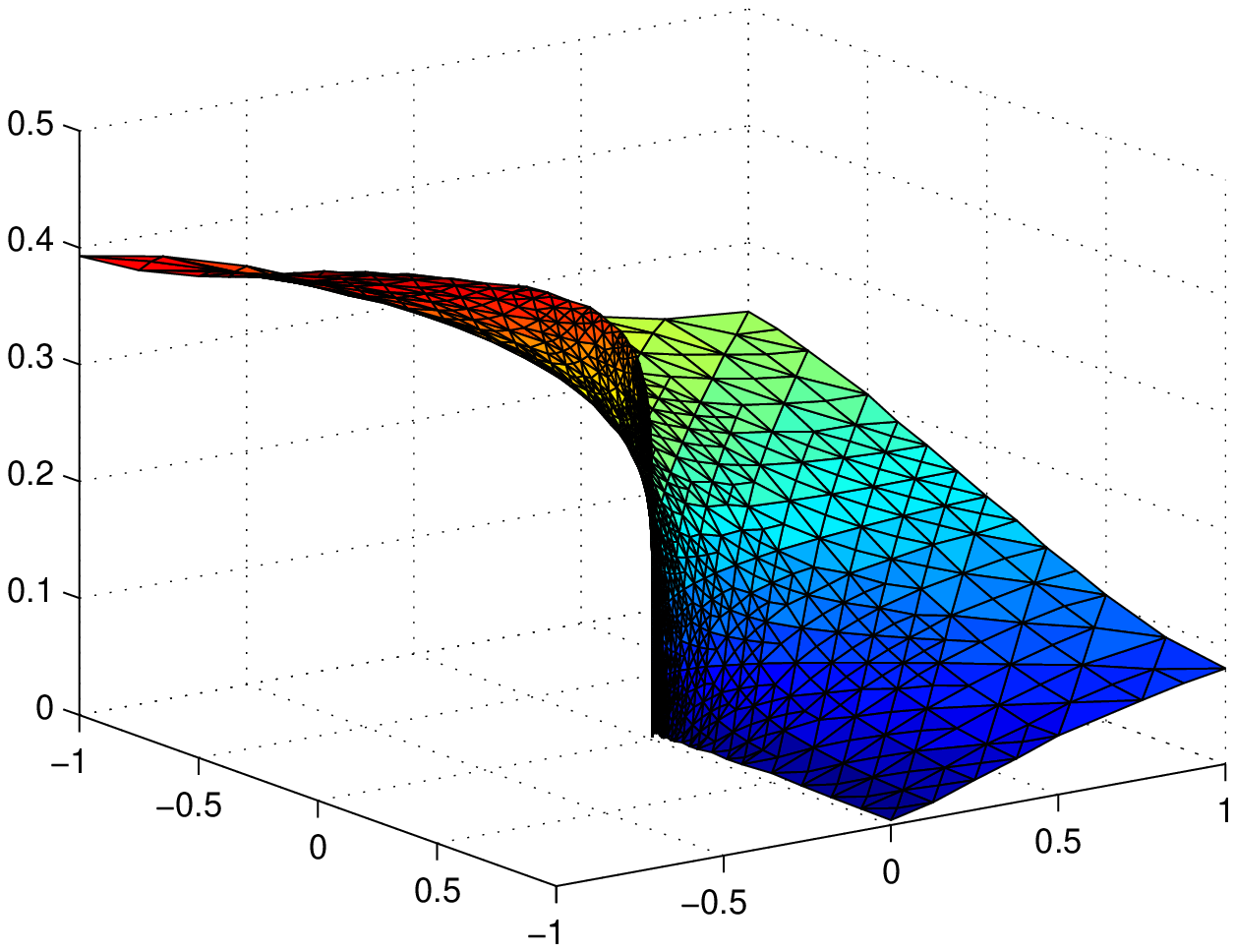}\\
  \end{minipage}
\addtocontents{lof}{figure}{FIG 7.4. {\small {\it The postprocessing
approximation to
 the pressure on the adaptively refined mesh in case $r=0.4$ (left) and
 in case $r=0.1$ (right).}}}
\end{figure}

Fig 7.4 reports a continuous piecewise-linear postprocessing approximation to
 the pressure on the adaptively refined mesh in the case $r=0.4$ (left) and
 in the  case $r=0.1$ (right) with 24 iterations. Since the approximation to the
 pressure of the MFMFE method is piecewise constant, the value of
 the  postprocessing approximation to the pressure on each node is
 taken as the algorithmic mean of the values of the pressure finite
 element solution on all the elements sharing the vertex.   
% From Fig 7.4 we
% see that the postprocessing approximation to the pressure obtains a
% satisfactory result.
\begin{figure}[htbp]
  \begin{minipage}[t]{0.5\linewidth}
    \centering
    \includegraphics[width=2.5in]{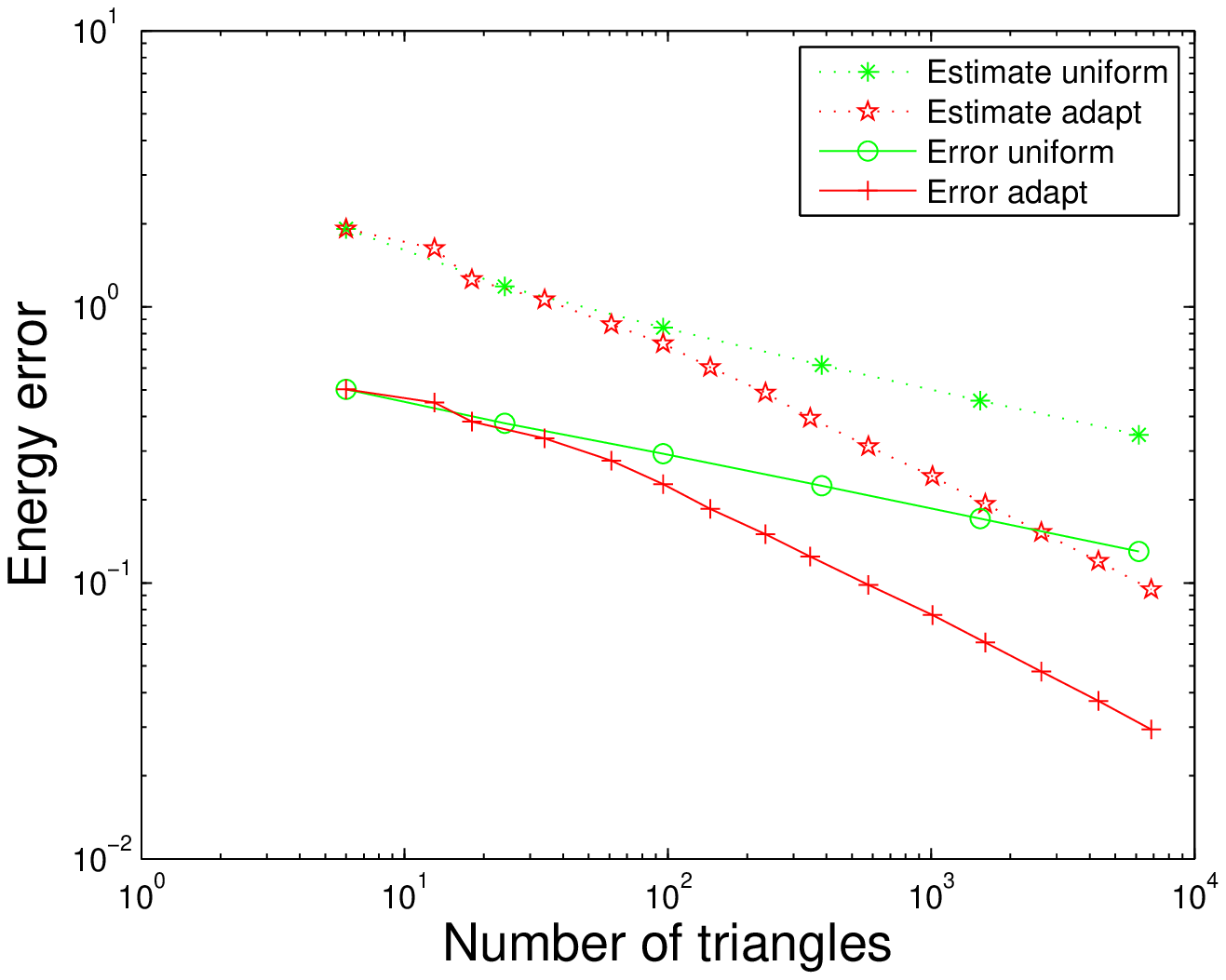}\\
  \end{minipage}
  \begin{minipage}[t]{0.5\linewidth}
    \centering
    \includegraphics[width=2.5in]{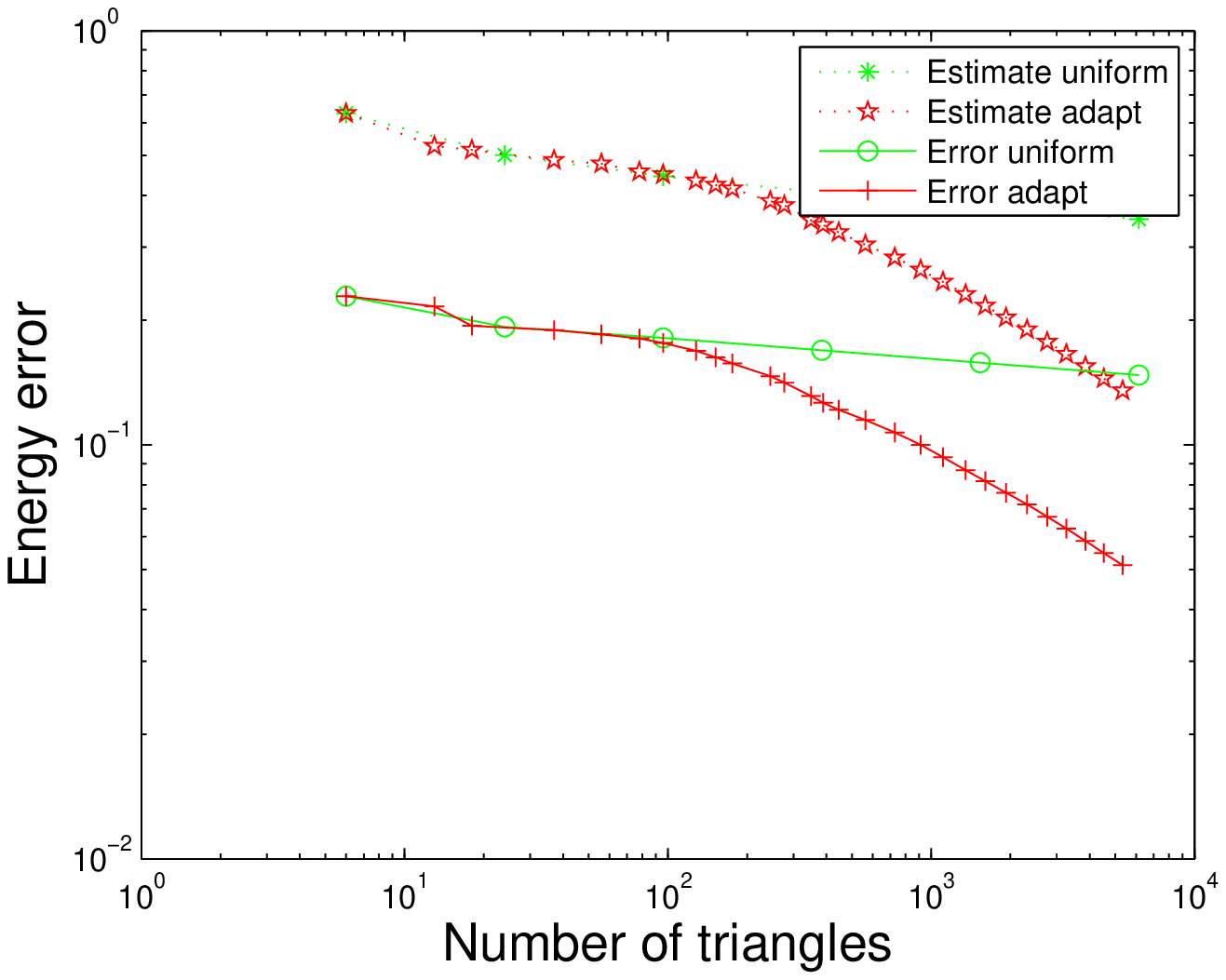}\\
  \end{minipage}
\addtocontents{lof}{figure}{FIG 7.5. {\small {\it The estimated and
actual errors against the number of elements in uniformly /
adaptively refined meshes in case $r=0.4$ (left)
 and in case $r=0.1$ (right) with the marking parameter $\tilde\theta=0.5$.}}}
\end{figure}

Fig 7.5 reports the estimated and actual errors of the numerical
solutions on uniformly and adaptively refined meshes. It can be seen
that the error of the velocity in $L^{2}$ norm uniformly reduces
with a fixed factor on two successive meshes, and that the error on
the adaptively refined meshes decreases more rapidly than the one on
the uniformly refined meshes. This means that one can substantially
reduce the number of unknowns necessary to obtain the prescribed
accuracy by using  {\it a posteriori } error estimators and adaptively
meshes. We note that the exact error is approximated with a 7-point
quadrature formula in each triangle.
\begin{figure}[htbp]
  \begin{minipage}[t]{0.5\linewidth}
    \centering
    \includegraphics[width=2.5in]{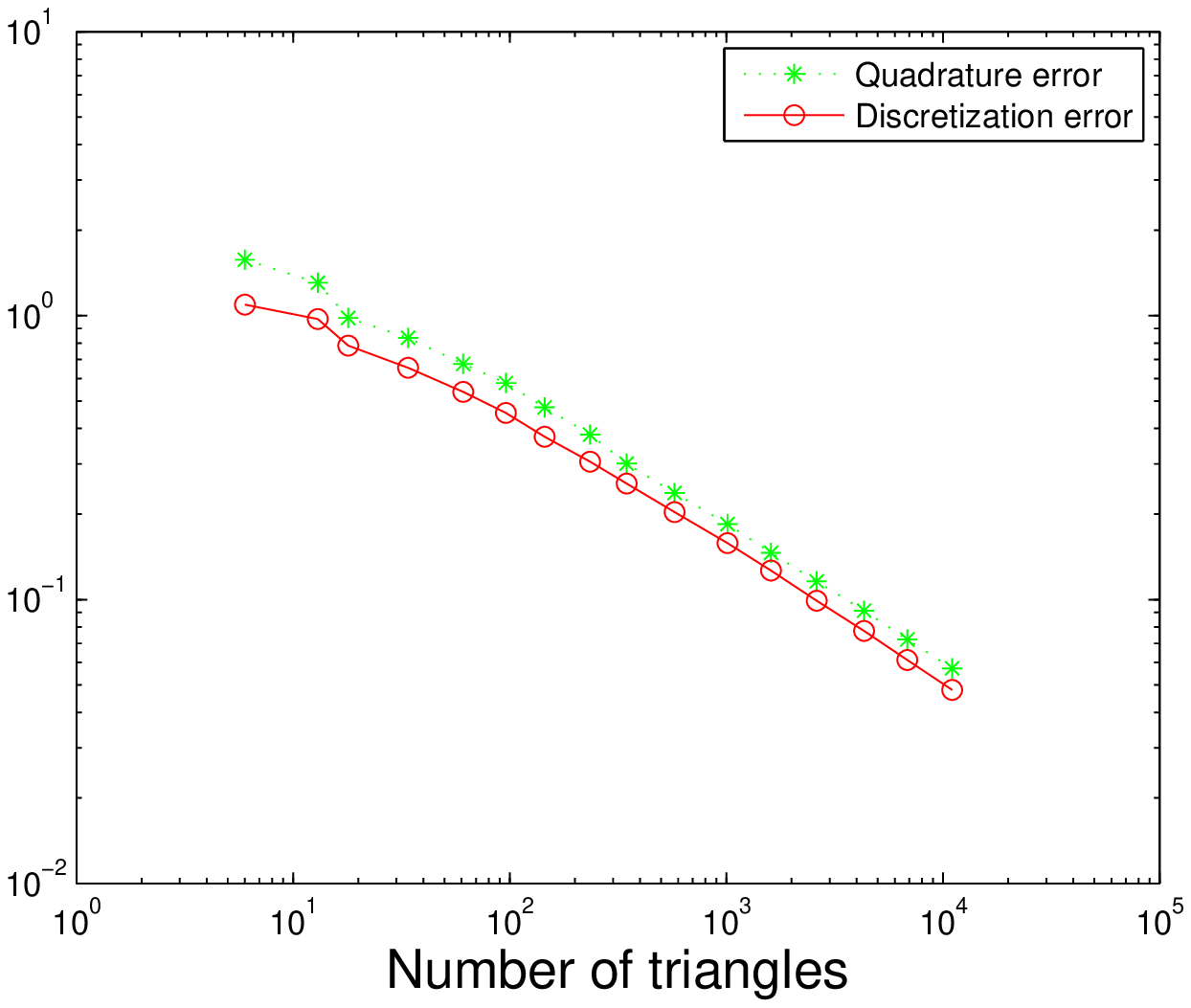}\\
  \end{minipage}
  \begin{minipage}[t]{0.5\linewidth}
    \centering
    \includegraphics[width=2.5in]{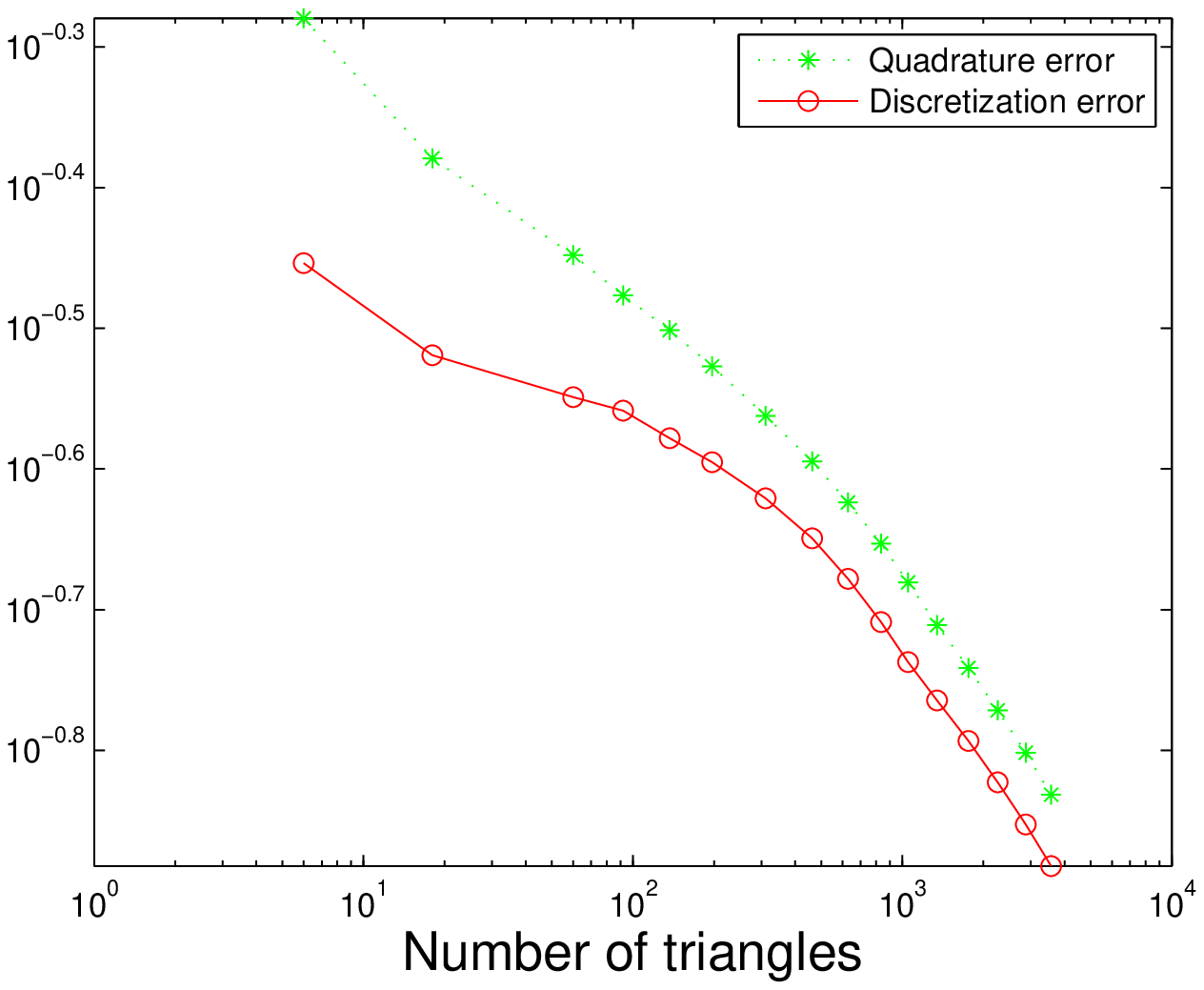}\\
  \end{minipage}
\addtocontents{lof}{figure}{FIG 7.6. {\small {\it The quadrature
error $\eta_{Q}$ and discretization error $\eta_{h}$ against the
number of elements in adaptively refined meshes in case $r=0.4$
with the marking parameter $\tilde\theta=0.5$ (left)
 and in case $r=0.1$ with the marking parameter $\tilde\theta=0.8$ (right) .}}}
\end{figure}

Fig 7.6 shows the quadrature error $\eta_{Q}$ and discretization
error $\eta_{h}$ in adaptively refined meshes in case $r=0.4$
with the marking parameter $\theta=0.5$ (left)
 and in case $r=0.1$ with the marking parameter $\theta=0.8$
 (right). It can be seen that the error indicator $\eta_{h}$
 produced by the discretization is very close to the error indicator
 $\eta_{Q}$ produced by the quadrature rule as the mesh is refined.
 This also shows that the quadrature indicator $\eta_{Q}$ is
 very efficient. We note that this efficiency is not sufficiently demonstrated by Theorem \ref{equation9} due to the appearance of the pressure error term, while this error term usually has the second order accuracy on   uniform meshes.

\subsection*{Example 7.2} We consider the problem
(\ref{equation1}) in a square domain $\Omega=(-1,1)\times(-1,1)$  with Dirichlet boundary conditions,
where $\Omega$ is divided into four subdomains $\Omega_{i}$
($i=1,2,3,4$) corresponding to the axis quadrants (in the
counterclockwise direction), and the permeability $K$ is piecewise
constant with $K=s_{i}I$ in $\Omega_{i}$. We assume the exact
solution of this model has the form
\begin{equation*}
p(\rho,\theta)|_{\Omega_{i}}=\rho^{r}(a_{i}sin(r\theta)+b_{i}cos(r\theta)).
\end{equation*}
 Here
$\rho,\theta$ are the polar coordinates in $\Omega$, $a_{i}$ and
$b_{i}$ are constants depending on $\Omega_{i}$, and $r$ is a
parameter. This solution is not continuous across the interfaces,
and only the normal component of its velocity ${\bf u}=-K\nabla p$
is continuous, and it  exhibits a strong singularity at the
origin. We consider a set of coefficients in the following table:
\begin{center}
\scriptsize
\begin{tabular}{|c|} \hline
 $s_{1}=s_{3}=5$, $s_{2}=s_{4}=1$\\ \hline $r=0.53544095$\\
\hline
$a_{1}=\ \ 0.44721360$, $b_{1}=\ \ 1.00000000$\\
$a_{2}=-0.74535599$,  $b_{2}=\ \ 2.33333333$\\
$a_{3}=-0.94411759$,  $b_{3}=\ \ 0.55555555$\\
$a_{4}=-2.40170264$, $b_{4}=-0.48148148$ \\ \hline
\end{tabular}
\end{center}

The origin mesh consists of 8 right-angled triangles. We perform the
adaptive algorithm described in Example 7.1 with the marking
parameter $\tilde\theta=0.5$.  Figs 7.7-7.8 report the
adaptive meshes generated by 6 to 8 iterations, and the continuous piecewise-linear
postprocessing approximation to the pressure on the adaptively
refined mesh. We again see that the refinement concentrates around
the origin. This indicates that the predicted error estimator
captures well the singularity of the solution.
%, and that the
%postprocessing approximation is satisfactory.
\begin{figure}[htbp]
  \begin{minipage}[t]{0.5\linewidth}
    \centering
    \includegraphics[width=2.5in]{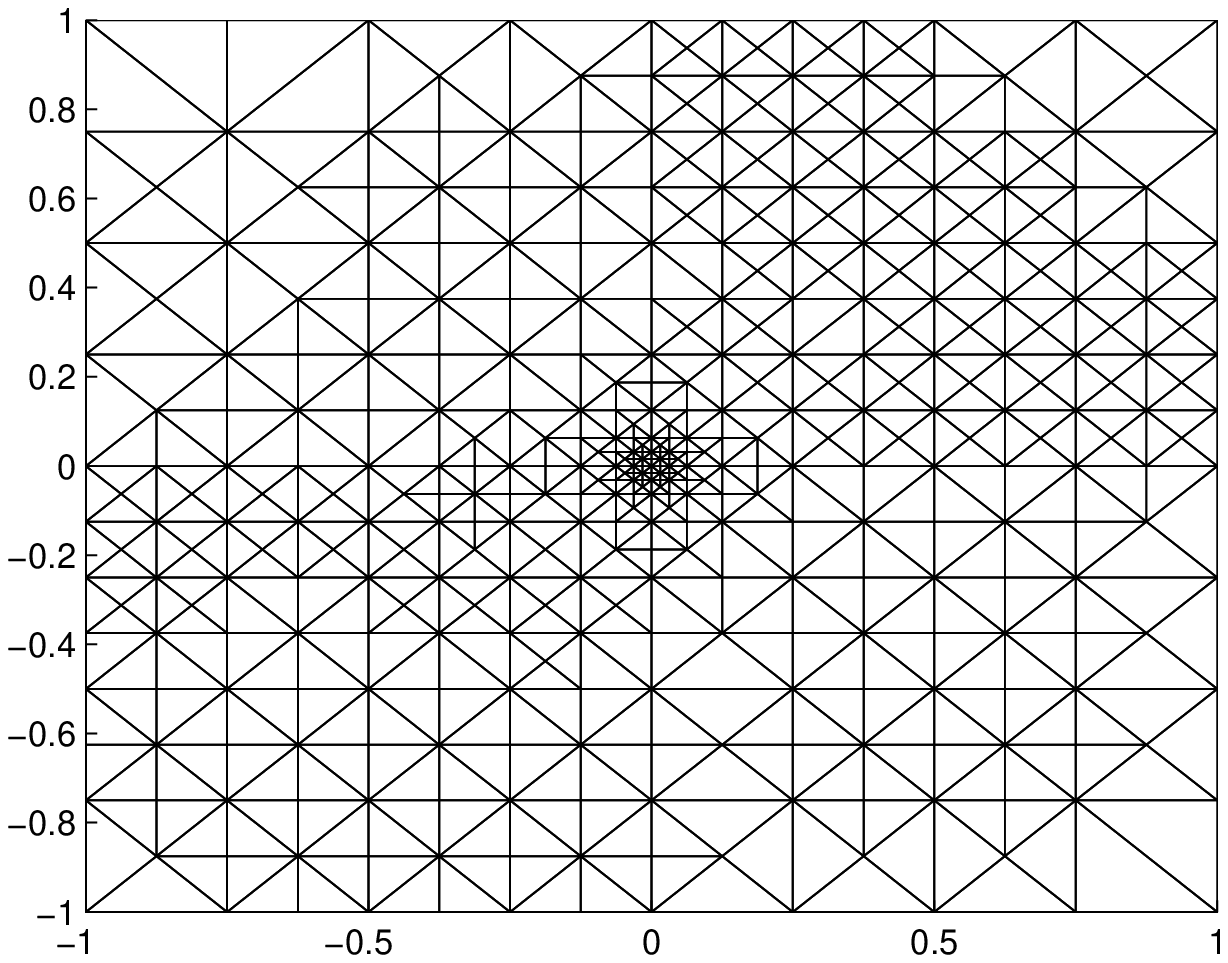}\\
  \end{minipage}
  \begin{minipage}[t]{0.5\linewidth}
    \centering
    \includegraphics[width=2.5in]{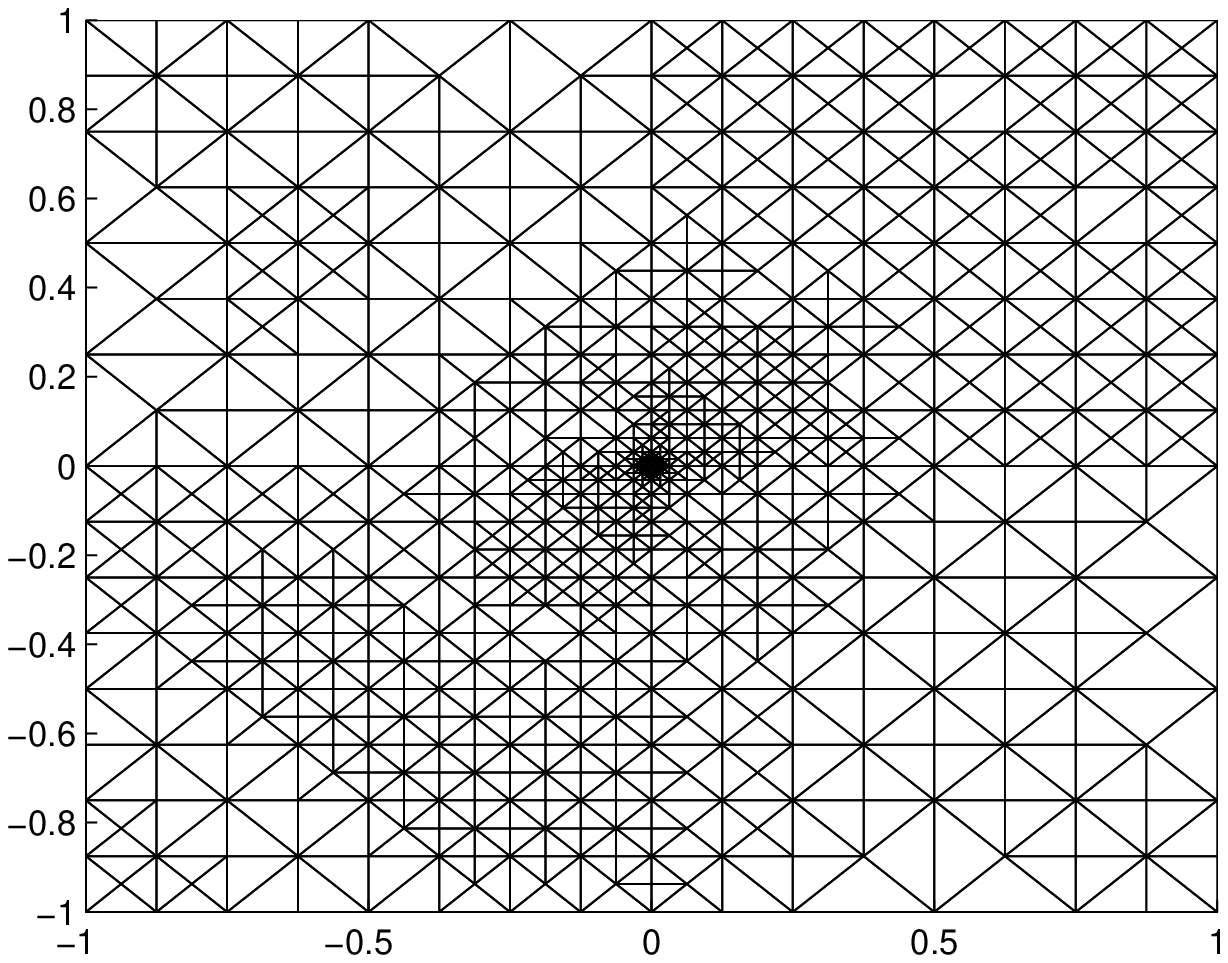}\\
  \end{minipage}
\addtocontents{lof}{figure}{FIG 7.7. {\small {\it A mesh with 740
triangles, iteration 6 (left) and a mesh with 1350 triangles,
iteration 7 (right).}}}\\
\end{figure}
\begin{figure}[htbp]
  \begin{minipage}[t]{0.5\linewidth}
    \centering
    \includegraphics[width=2.5in]{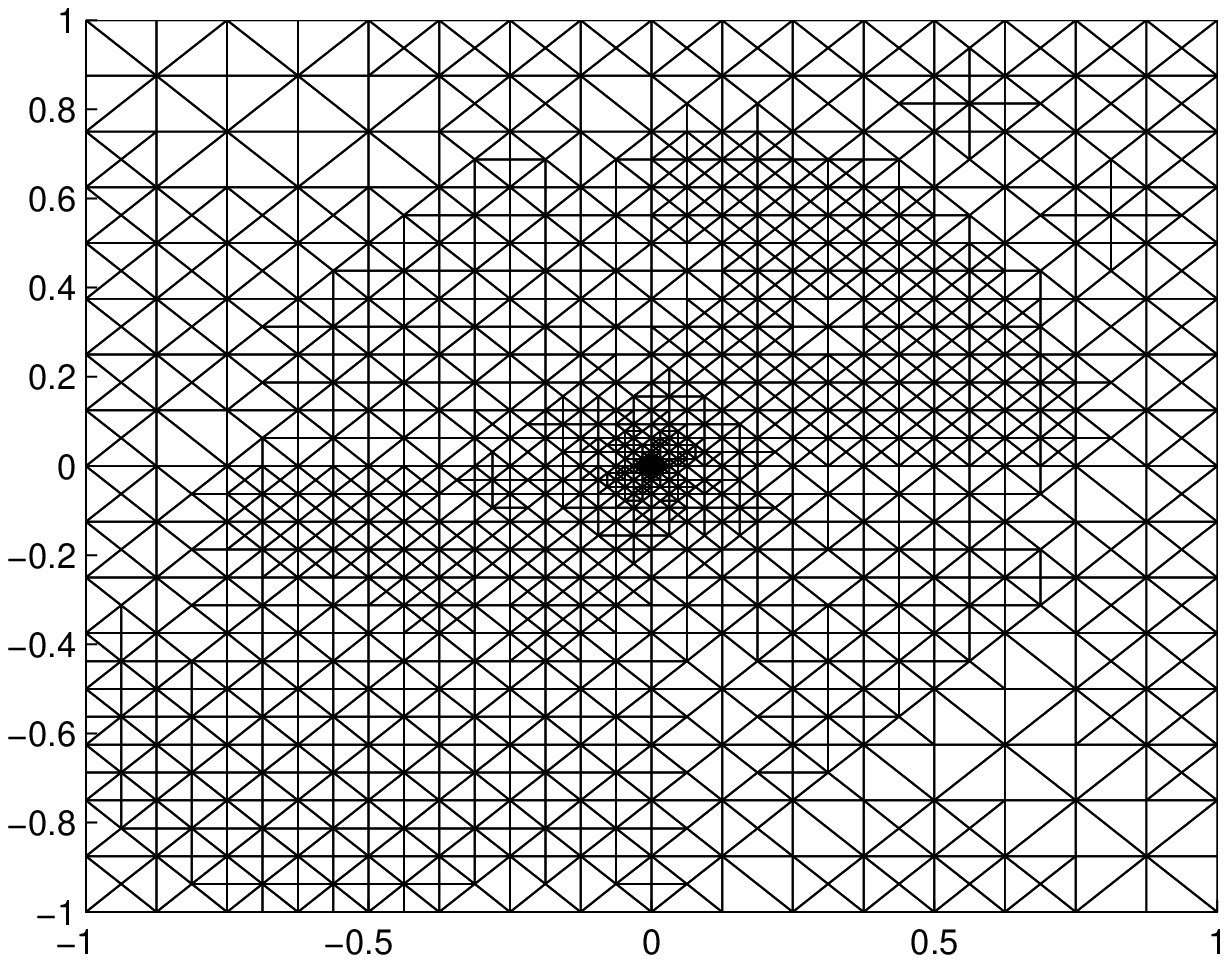}\\
  \end{minipage}
  \begin{minipage}[t]{0.5\linewidth}
    \centering
    \includegraphics[width=2.5in]{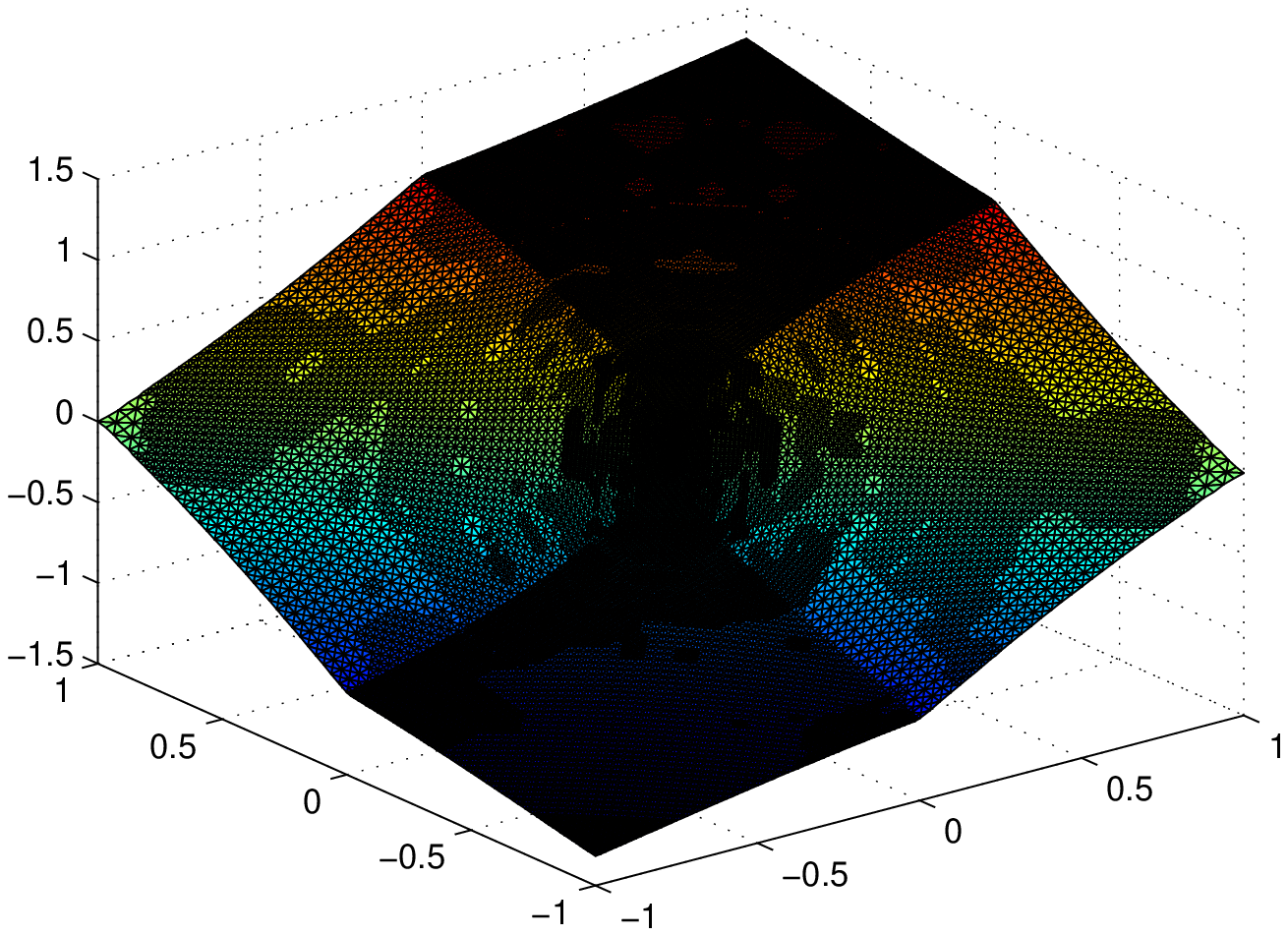}\\
  \end{minipage}
\addtocontents{lof}{figure}{FIG 7.8. {\small {\it  A mesh with 2328
triangles, iteration 8 (left) and the postprocessing
approximation to the pressure on the adaptively refined mesh.}}}
\end{figure}

Fig 7.9 reports the estimated and actual errors of the numerical
solutions on uniformly and adaptively refined meshes (left), and the
quadrature indicator $\eta_{Q}$ and discretization indicator
$\eta_{h}$ in adaptively refined meshes (right).
\begin{figure}[htbp]
  \begin{minipage}[t]{0.5\linewidth}
    \centering
    \includegraphics[width=2.5in]{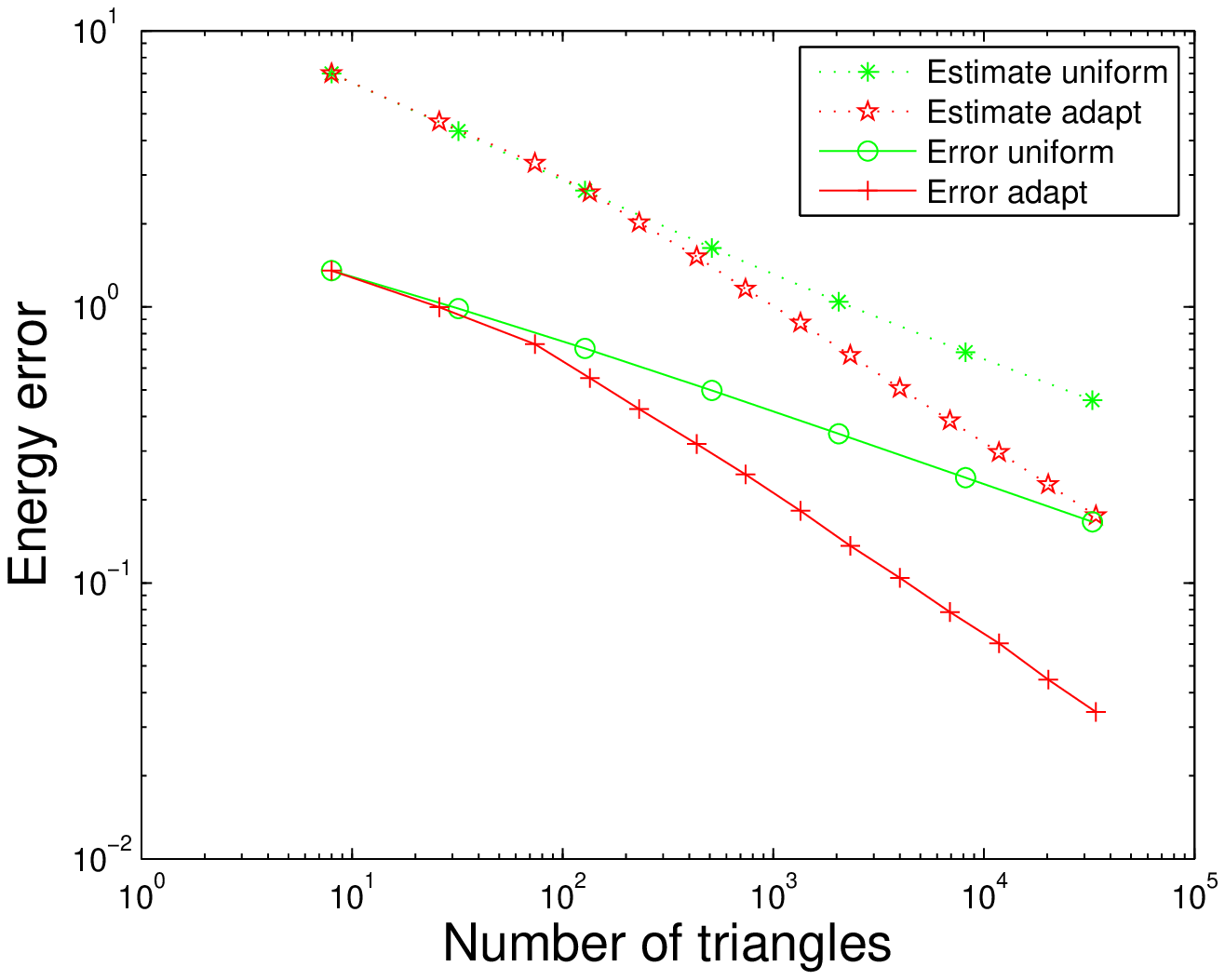}\\
  \end{minipage}
  \begin{minipage}[t]{0.5\linewidth}
    \centering
    \includegraphics[width=2.5in]{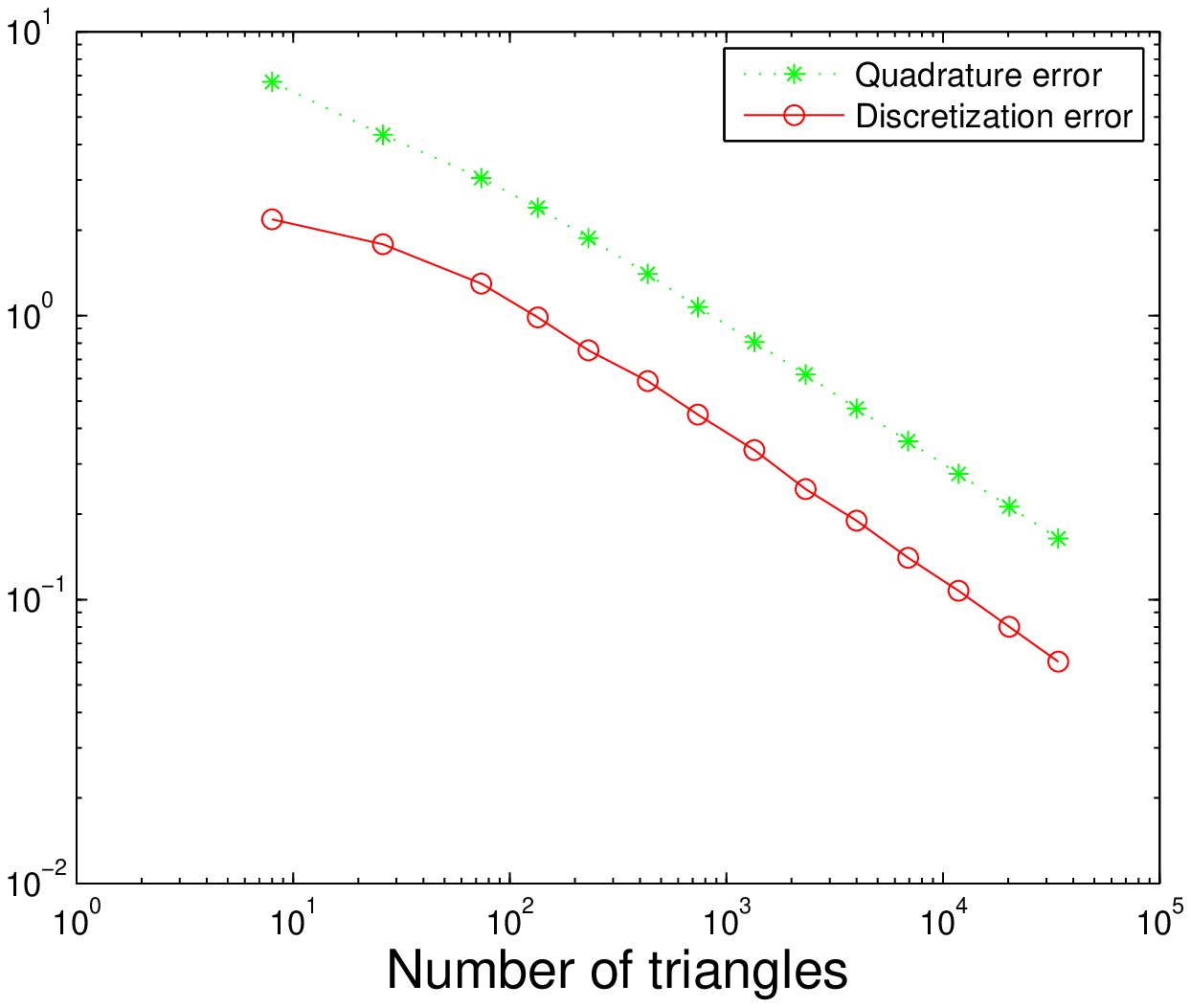}\\
  \end{minipage}
\addtocontents{lof}{figure}{FIG 7.9. {\small {\it The estimated and
actual errors against the number of elements in uniformly /
adaptively refined meshes (left) and the quadrature error $\eta_{Q}$
and discretization error $\eta_{h}$ against the number of elements
in adaptively refined meshes (right).}}}
\end{figure}

We can see that the error of the velocity uniformly
reduces with a fixed factor on two successive meshes,   that the
error on the adaptively refined meshes decreases more rapidly than
the one on the uniformly refined meshes, and that the a posteriori
error estimators developed in this paper are efficient with respect
to inhomogeneities and anisotropy of the permeability. This means
that one can substantially reduce the number of unknowns necessary
to obtain the prescribed accuracy by using {\it a posteriori } error
estimators and adaptively refined meshes.  We also
see that the error indicator $\eta_{h}$ and $\eta_{Q}$ differs
at most a constant factor, which shows   the quadrature error
estimator $\eta_{Q}$ is   efficient.

\section{Conclusions} In this contribution we have developed a reliable and efficient   a posteriori error estimator of residual-type  for the multi-point flux mixed finite element methods for flow in porous media in two or three space dimensions.  The main tools of  our analysis are  a locally postprocessed technique and a quadrature error estimation. Numerical experiments are conformable to our theoretical  results.

\end{document}